\newtheorem{theorem}{Th\'eor\`eme}[section]
\newtheorem{lemma}[theorem]{Lemme}
\newtheorem{corollary}[theorem]{Corollaire}
\theoremstyle{definition}
\newtheorem{definition}[theorem]{D\'efinition}
\newtheorem{remark}[theorem]{Remarque}
\numberwithin{equation}{section}
\def\Q{\mathbb{Q}}
\def\Z{\mathbb{Z}}
\let\sst=\scriptscriptstyle
\let\ds=\displaystyle
\def\order{\raise1.5pt \hbox{${\scriptscriptstyle \#}$}}
\def\Frac#1#2{\hbox{\footnotesize $\ds \frac{#1}{#2}$}}
\def \go{\leavevmode
\raise.3ex\hbox{$\sst\langle\!\langle$}%
~\ignorespaces\!}
\def \gf{\!\relax \ifhmode \unskip~\else \leavevmode \fi
\raise.3ex\hbox{$\sst\rangle\!\rangle$}\ }
\def\lien{\mathrel{\mkern-4mu}}
\def\too{\relbar\lien\rightarrow}
\def\tooo{\relbar\lien\relbar\lien\too}
\def\toooo{\relbar\lien\relbar\lien\tooo}
\def\Cl{{\mathcal C}\hskip-2pt{\ell}}
\def\cl{c\hskip-1pt{\ell}}
\def\plus{\ds\mathop{\raise 2.0pt \hbox{$\bigoplus$}}\limits}
\def\prd{\ds\mathop{\raise 2.0pt \hbox{$\prod$}}\limits}
\def\sm{\ds\mathop{\raise 2.0pt \hbox{$\sum$}}\limits}
\title[Conjecture de Greenberg]
{Normes d'id\'eaux  dans la tour cyclotomique \\ et conjecture de Greenberg \\ 
{\footnotesize (hypoth\`eses $p$-adiques sur les normes d'id\'eaux)} }
\author{Georges  Gras}
\address{Villa la Gardette, chemin Ch\^ateau Gagni\`ere,
 38520 Le Bourg d'Oisans \\ 
{\rm\url{http://www.researchgate.net/profile/Georges_Gras}}}
\email{g.mn.gras@wanadoo.fr}
\begin{document}


\maketitle

\begin{abstract}
Pre-print d'un article publ\'e dans ``Annales math\'ematiques du Qu\'ebec''.
Soit $k$ un corps de nombres totalement r\'eel  et soit
$k_\infty$ sa $\Z_p$-extension cyclotomique.
Ce travail prolonge notre article \go Approche $p$-adique 
de la conjecture de Greenberg pour les corps totalement r\'eels\gf 
au moyen d'heuristiques sur le comportement $p$-adique de normes
d'id\'eaux dans $k_\infty/k$~; en effet, cette conjecture 
(sur la nullit\'e des invariants 
$\lambda$ et $\mu$ d'Iwasawa) d\'epend d'images de ces normes
dans le groupe de torsion ${\mathcal T}_k$ du groupe de Galois de 
la pro-$p$-extension ab\'elienne $p$-ramifi\'ee maximale de $k$,
donc de leurs symboles d'Artin dans une extension 
finie $F/k$ obtenue par descente galoisienne de ${\mathcal T}_k$.
Une hypoth\`ese naturelle de r\'epartition de ces symboles 
implique $\lambda=\mu=0$. Des statistiques 
dans le cas quadratique confirment la 
probable exactitude de telles propri\'et\'es qui constituent 
l'obstruction fondamentale \`a une preuve de la conjecture de 
Greenberg dans le seul cadre de la th\'eorie d'Iwasawa.

\noindent
{\bf Abstract}
Pre-print of a publication in `` Annales math\'ematiques du Qu\'ebec ''.
Let $k$ be a totally real number field and let $k_\infty$ be its cyclotomic 
$\Z_p$-extension. This work continues our article \go Approche 
$p$-adique de la conjecture de Greenberg pour les corps totalement 
r\'eels\gf by means of heuristics on the $p$-adic behavior of ideal 
norms in $k_\infty/k$; indeed, this conjecture 
(on the nullity of the Iwasawa 
invariants $\lambda$, $\mu$) depends on some images of these 
norms in the torsion group ${\mathcal T}_k$ of the Galois group 
of the maximal abelian $p$-ramified pro-$p$-extension of $k$, 
thus of their Artin symbols in a finite extension $F/k$ obtained by 
Galois descent of ${\mathcal T}_k$. A natural assumption of 
distribution of these Artin symbols implies $\lambda=\mu=0$. 
Statistics in the quadratic case confirm the probable 
exactness of such properties
which constitute the fundamental obstruction for a proof 
of Greenberg's conjecture in the sole framework 
of Iwasawa's theory.

\noindent
{\bf Mots-cl\'es}{Greenberg's conjecture, Iwasawa's theory, 
$p$-class groups, class field theory, $p$-adic regulators, 
Fermat quotients of algebraic numbers}

\noindent
{\bf Mathematics Subject Classification 2010} 11R23, 11R29, 11R37, 11Y40
\end{abstract}

\tableofcontents

\section{Introduction -- Contexte \go Conjecture de Greenberg\,\gf}\label{sec1}
Dans \cite{Gra3}, nous avons pos\'e une hypoth\`ese de 
r\'epartition \go uniforme\gf de symboles d'Artin convenables
de normes ${\rm N}_{k_n/k}({\mathfrak A})$
d'id\'eaux ${\mathfrak A}$ des \'etages $ k_n$ de la tour cyclo\-tomique 
$k_\infty = \bigcup_{n \geq 0} k_n$ d'un corps de nombres totalement r\'eel $k$
relativement \`a un nombre premier donn\'e $p>2$ totalement d\'ecompos\'e.
En effet, on constate qu'il existe des obstructions $p$-adiques \`a une 
preuve, dans le seul cadre de la th\'eorie d'Iwasawa alg\'ebrique, de 
la conjecture de Greenberg sur la nullit\'e des invariants $\lambda$ et $\mu$
pour les corps {\it totalement r\'eels} \cite[Theorems 1 and 2]{Gre1}, 
\cite[Conjecture 3.4]{Gre2}, \'etant entendu que cette
conjecture se pose quelle que soit la d\'ecomposition de $p$
d'apr\`es le point de vue de Jaulent~\cite{J2}. Cette hypoth\`ese de r\'epartition
met en jeu le groupe de torsion ${\mathcal T}_k$ (fini) de la pro-$p$-extension
ab\'elienne $p$-ramifi\'ee maximale $H_k^{\rm pr}$ de $k$ par l'interm\'ediaire 
de sa descente galoisienne comme groupe de Galois d'une extension 
ab\'elienne finie $F/k$, pouvant \^etre explicit\'ee.
Cette hypoth\`ese peut se r\'esumer par le fait que dans l'algorithme 
de \go d\'evissage\gf du $p$-groupe des classes de $k_n$,
les id\'eaux ${\mathfrak A}$ repr\'esentant ces classes sont tels que les
symboles d'Artin
$\Big(\Frac{F/k}{{\rm N}_{k_n/k}({\mathfrak A})}\Big)$ 
engendrent ${\rm Gal}(F/k)$ en un nombre d'\'etapes (de l'algorithme)
ind\'ependant de $n\gg 0$. Sous ces conditions 
(cf. Hypoth\`eses (H) \'enonc\'ees \`a la fin du \S\,\ref{sub2}), 
la conjecture de Greenberg en r\'esulte.

\smallskip
Pour un historique sur la conjecture de Greenberg, et sur les travaux 
pr\'ecurseurs de Ozaki et Taya \cite{OT}, \cite{Oz}, \cite{Ta}, 
se reporter \`a \cite{Gra3} et \`a sa biblio\-graphie, ainsi qu'aux 
r\'ecents points de vue de Jaulent \cite{J2} et Nguyen Quang Do \cite{Ng}.

\section{Pro-$p$-extension ab\'elienne $p$-ramifi\'ee 
maximale -- Le groupe ${\mathcal T}_k$}\label{sub1}

Soit $k$ un corps de nombres galoisien r\'eel, de degr\'e $d$, et
de groupe de Galois $g$. 
Soit $S_k := \{{\mathfrak p} \mid p\}$ l'ensemble des $p$-places de $k$.
Sous la conjecture de Leopoldt pour $p$ dans $k_\infty$, on obtient
le sch\'ema ci-apr\`es (dit de la {\it $p$-ramification ab\'elienne}). Dans
toute la suite, on suppose $p>2$ totalement d\'ecompos\'e dans~$k$.

\smallskip
On d\'esigne par $\Cl_k$ le $p$-groupe des classes de $k$ 
et par $E_k$ le groupe des unit\'es globales
$p$-principales $\varepsilon \equiv 1 \pmod p$ de $k$.
Soit $U_k:=\bigoplus_{{\mathfrak p} \in S_k} U_{\mathfrak p}^1$ le 
$\Z_p$-module (de $\Z_p$-rang $d$) des unit\'es locales $p$-principales
o\`u chaque $U_{\mathfrak p}^1$ est le groupe des unit\'es 
$\overline {\mathfrak p}$-principales de la compl\'etion
 $k_{\mathfrak p}$ de $k$ en ${\mathfrak p} \in S_k$,
et $\overline  {\mathfrak p}$ l'ideal maximal pour $k_{\mathfrak p}$. 
Par hypoth\`ese, $U_k \simeq (\Z_p^\times)^d = (1+p\,\Z_p)^d$.

\smallskip
Soit $K := k_n$, de degr\'e $p^n$ sur $k$, le $n$-i\`eme \'etage
de la $\Z_p$-extension cyclotomique $k_\infty$ de $k$~; comme
c'est une extension galoisienne {\it r\'eelle} de $\Q$, sous la conjecture 
de Leopoldt le compos\'e de ses $\Z_p$-extensions 
(corps des fixes de ${\mathcal T}_K$) est r\'eduit \`a
$K_\infty = k_\infty$.
Soient $H_k^{\rm pr}$ et $H_K^{\rm pr}$ 
les pro-$p$-extensions ab\'eliennes $p$-ramifi\'ees (i.e., non ramifi\'ees 
en dehors de $p$) maximales, de $k$ et $K$.
Dans le sch\'ema, $H_k$ est le $p$-corps de classes 
de Hilbert de $k$ et, comme $p$ est non ramifi\'e dans $k$,
$H_k\cap k_\infty =k$ et il existe une 
extension $F$ de $k$, contenant  $H_k$, telle que $H_k^{\rm pr}$ 
soit le compos\'e direct de $F$ et $k_\infty$ sur $k$. On pose 
$\Gamma_\infty = {\rm Gal}(H_k^{\rm pr}/F)$.

\smallskip
Les groupes ${\mathcal A}_k := {\rm Gal}(H_k^{\rm pr}/k)$ 
et ${\mathcal A}_K := {\rm Gal}(H_K^{\rm pr}/K)$ sont
des $\Z_p$-modules de sous-modules de torsion
${\mathcal T}_k$ et ${\mathcal T}_K$. Comme $k$ 
est r\'eel, $\order {\mathcal T}_k$ est donn\'e, 
sous la conjecture de Leopoldt, par la formule du r\'esidu 
de la fonction $\zeta$ $p$-adique de~$k$ (\cite{Co}, \cite {Col}, 
\cite{Se}). Mais on peut affirmer, comme nous l'avons expliqu\'e dans
\cite{Gra7}, que l'analytique $p$-adique classique n'apporte 
actuellement aucune information, aussi nous en resterons aux
caract\'erisations {\it arithm\'etiques} de ${\mathcal T}_k$.

\smallskip
Soit $\overline  E_k$ l'adh\'erence de l'image diagonale $\iota(E_k)$ 
de $E_k$ dans $U_k$~; d'apr\`es le corps de classes, on a 
${\rm Gal}(H_k^{\rm pr}/H_k) \simeq U_k/\overline  E_k$. 
On v\'erifie, puisque ${\rm rg}_{\Z_p}(\overline  E_k)=d-1$,
que ${\rm tor}_{\Z_p}(U_k/\overline  E_k) = U_k^*/\overline  E_k$
o\`u $U_k^*$ est le noyau de la norme absolue.
\unitlength=1.15cm 
$$\vbox{\hbox{\hspace{-3.8cm}  \begin{picture}(9.5,5.8)
\put(6.2,4.50){\line(1,0){1.5}}
\put(6.2,2.50){\line(1,0){1.5}}
\put(6.0,0.45){\line(1,0){1.7}}
\put(8.25,4.50){\line(1,0){2.5}}
\put(3.85,4.50){\line(1,0){1.4}}
\put(3.8,2.50){\line(1,0){1.5}}
\put(3.8,0.45){\line(1,0){1.5}}
\bezier{650}(3.8,4.8)(7.6,6.4)(11.0,4.8)
\put(7.4,5.7){${\mathcal T}_{K}$}
\bezier{550}(3.8,4.6)(6.0,5.4)(7.8,4.6)
\put(5.65,5.1){${\mathcal T}_{k}$}
\put(3.50,2.9){\line(0,1){1.25}}
\put(3.50,0.9){\line(0,1){1.25}}
\put(5.7,2.9){\line(0,1){1.25}}
\put(5.7,0.9){\line(0,1){1.25}}
\put(8.0,0.9){\line(0,1){1.25}}
\put(8.0,2.9){\line(0,1){1.25}}
\put(4.4,4.15){\footnotesize$\simeq \! \Cl_k$}
\bezier{700}(3.8,2.56)(8.0,3.0)(11.0,4.3)
\put(9.4,3.45){${\mathcal A}_{K}$}
\bezier{600}(3.8,0.55)(6.8,0.8)(7.85,4.3)
\put(6.35,1.5){${\mathcal A}_k$}
\bezier{500}(3.8,2.6)(6.8,3.3)(7.8,4.3)
\put(6.6,3.3){${\rm N}{\mathcal A}_{K}$}
\put(10.85,4.4){$H_{K}^{\rm pr}$}
\put(5.3,4.4){$k_\infty^{} H_k$}
\put(7.8,4.4){$H_k^{\rm pr}$}
\put(6.35,4.15){\footnotesize$\simeq \! U_k^*\!/\! \overline  E_k$}
\put(2.7,4.4){$k_\infty^{}\! \!= \!K_\infty^{}$}
\put(5.45,2.4){$K H_k$}
\put(7.75,2.4){$K F$}
\put(5.5,0.4){$H_k$}
\put(7.8,0.4){$F$}
\put(3.4,2.4){$K$}
\put(3.4,0.40){$k$}
\put(1.4,0.40){$\Q$}
\put(1.8,0.45){\line(1,0){1.5}}
\put(2.35,0.6){$g$}
\put(3.6,1.5){\footnotesize $p^n$}
\bezier{600}(8.2,0.6)(9.4,2.4)(8.2,4.2)
\put(8.9,2.4){$\Gamma_\infty$}
\put(2.4,1.45){$G$}
\bezier{500}(3.2,2.4)(2.5,1.5)(3.2,0.6)
\end{picture}   }} $$
\unitlength=1.0cm
Rappelons ce qui r\'esulte de la conjecture de Leopoldt et 
qui justifie le sch\'ema~:

\begin{theorem}(\cite[\S\,4]{Gra4})).
Soient $k$ un corps de nombres totalement r\'eel et $p\geq 2$ quelconques.
Sous la conjecture de Leopoldt pour $p$ dans $k$ on a en toute g\'en\'eralit\'e
les suites exactes~:
\begin{equation*}
\begin{aligned}
&1 \too {\rm tor}_{\Z_p}(U_k/\overline  E_k) \tooo  {\mathcal T}_k \tooo 
{\rm Gal}(k_\infty H_k/k_\infty) \too 1, \\
&1 \to {\rm tor}_{\Z_p}(U_k)/ \iota(\mu_k)
\too  {\rm tor}_{\Z_p}(U_k/\overline  E_k) \too  
{\rm tor}_{\Z_p} \! ({\rm log}(U_k)/{\rm log}(\overline  E_k))  \to 1,
\end{aligned}
\end{equation*}
o\`u $\mu_k$ est le groupe des racines de l'unit\'e d'ordre 
puissance de $p$ de $k$, et o\`u l'on a pos\'e
${\rm tor}_{\Z_p} ({\rm log}(U_k)/{\rm log}(\overline  E_k)) =: {\mathcal R}_k$
qui est appel\'e le r\'egulateur $p$-adique normalis\'e de~$k$.
\end{theorem}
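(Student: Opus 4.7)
Le plan est de d\'eduire les deux suites exactes, respectivement, de la th\'eorie du corps de classes appliqu\'ee \`a la tour $k \subset H_k \subset k_\infty H_k \subset H_k^{\mathrm{pr}}$, et du morphisme logarithme $p$-adique sur les unit\'es locales, en s'appuyant sur la conjecture de Leopoldt pour contr\^oler les $\mathbb{Z}_p$-rangs.

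Pour la premi\`ere suite, je partirais de l'identification $\mathcal{T}_k \simeq \mathrm{Gal}(H_k^{\mathrm{pr}}/k_\infty)$ lue sur le sch\'ema : sous Leopoldt, $k_\infty$ est l'unique $\mathbb{Z}_p$-extension de $k$ totalement r\'eel et correspond donc au quotient $\mathbb{Z}_p$-libre de $\mathcal{A}_k$. La tour $k_\infty \subset k_\infty H_k \subset H_k^{\mathrm{pr}}$ fournit la suite exacte
\[
1 \to \mathrm{Gal}(H_k^{\mathrm{pr}}/k_\infty H_k) \to \mathrm{Gal}(H_k^{\mathrm{pr}}/k_\infty) \to \mathrm{Gal}(k_\infty H_k/k_\infty) \to 1.
\]
Le but s'identifie \`a $\mathrm{Gal}(H_k/k)=\Cl_k$ via $H_k \cap k_\infty = k$ ($p$ \'etant non ramifi\'e dans $k$). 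Quant au noyau, c'est un sous-groupe de $\mathrm{Gal}(H_k^{\mathrm{pr}}/H_k) \simeq U_k/\overline{E}_k$ dont le quotient $\mathrm{Gal}(k_\infty H_k/H_k) \simeq \mathbb{Z}_p$ est $\mathbb{Z}_p$-libre~; ce noyau co\"incide donc n\'ecessairement avec $\mathrm{tor}_{\mathbb{Z}_p}(U_k/\overline{E}_k)$, ce qui \'etablit la premi\`ere suite.

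Pour la seconde suite, j'utiliserais le logarithme $p$-adique $\log : U_k \to \log(U_k)$, de noyau $\mathrm{tor}_{\mathbb{Z}_p}(U_k)$, qui induit une surjection naturelle
\[
U_k/\overline{E}_k \twoheadrightarrow \log(U_k)/\log(\overline{E}_k),
\]
de noyau $\mathrm{tor}_{\mathbb{Z}_p}(U_k)\cdot \overline{E}_k/\overline{E}_k \simeq \mathrm{tor}_{\mathbb{Z}_p}(U_k)/(\mathrm{tor}_{\mathbb{Z}_p}(U_k) \cap \overline{E}_k)$. Le point crucial est l'\'egalit\'e $\mathrm{tor}_{\mathbb{Z}_p}(U_k) \cap \overline{E}_k = \iota(\mu_k)$~: l'inclusion $\supseteq$ est imm\'ediate, tandis que l'autre sens repose sur la conjecture de Leopoldt, qui emp\^eche qu'une racine de l'unit\'e $p$-adique limite d'unit\'es globales \'echappe \`a $\iota(\mu_k)$. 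Le noyau vaut donc $\mathrm{tor}_{\mathbb{Z}_p}(U_k)/\iota(\mu_k)$, groupe fini. Dans la suite courte $1 \to \mathrm{tor}_{\mathbb{Z}_p}(U_k)/\iota(\mu_k) \to U_k/\overline{E}_k \to \log(U_k)/\log(\overline{E}_k) \to 1$, le noyau est \'egal \`a sa propre torsion~; le lemme du serpent fournit alors directement la suite exacte annonc\'ee entre les sous-modules de torsion, en identifiant au passage $\mathrm{tor}_{\mathbb{Z}_p}(\log(U_k)/\log(\overline{E}_k)) = \mathcal{R}_k$.

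L'obstacle principal me semble \^etre le contr\^ole pr\'ecis de $\overline{E}_k$ sous la conjecture de Leopoldt~: cette hypoth\`ese intervient \`a la fois pour assurer que $\overline{E}_k$ est de $\mathbb{Z}_p$-rang $d-1$ dans $U_k$ (et donc que les quotients en jeu sont de rang $1$), et pour exclure toute torsion parasite de $\overline{E}_k$ au-del\`a de $\iota(\mu_k)$. Une fois ces points acquis, les deux suites exactes se d\'eduisent par chasses au diagramme standards.
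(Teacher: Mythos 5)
The paper gives no proof of this theorem here — it is cited from \cite[\S\,4]{Gra4} — so there is no ``paper's own proof'' to compare against directly. Your argument is, however, correct and follows the natural route one would expect.

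A few small remarks. For the first sequence, your identification $\mathcal{T}_k=\mathrm{Gal}(H_k^{\mathrm{pr}}/k_\infty)$ uses Leopoldt twice: once to say $k_\infty$ is the unique $\Z_p$-extension of the totally real $k$, and once (implicitly) to get ${\rm rk}_{\Z_p}(U_k/\overline{E}_k)=1$, so that the kernel of the rank-$1$ quotient onto $\Z_p$ is exactly the torsion. You identify the target with $\Cl_k$ assuming $p$ unramified, but the theorem as stated keeps $\mathrm{Gal}(k_\infty H_k/k_\infty)$ precisely to stay general ($p\geq 2$ arbitrary, possibly ramified), so that parenthetical step should be dropped in a final write-up. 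For the second sequence, your key identity ${\rm tor}_{\Z_p}(U_k)\cap\overline{E}_k=\iota(\mu_k)$ is exactly where Leopoldt enters again, via $\overline{E}_k\simeq E_k\otimes\Z_p$ having torsion $\iota(\mu_k)$. Finally, the passage from the short exact sequence with finite (hence torsion) kernel to the sequence of torsion submodules is not really an application of the snake lemma; it is a direct check (a $\Z_p$-torsion element of the quotient lifts to a torsion element of the middle because the kernel is itself torsion). The conclusion is right, but calling it ``snake lemma'' is a slight misattribution. These are cosmetic points; the structure of your proof is sound.
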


\begin{corollary}\label{sefond}
Ces suites exactes se r\'esument, dans le cas $p>2$ totalement d\'ecompos\'e
(o\`u ${\rm tor}_{\Z_p}(U_k)=1$), au moyen de la suite exacte~:
\begin{equation}\label{se1}
1 \too U^*_k/\overline  E_k \simeq {\mathcal R}_k \tooo {\mathcal T}_k 
\tooo \Cl_k \too 1.
\end{equation}
\end{corollary}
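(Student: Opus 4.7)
Le plan consiste \`a simplifier s\'epar\'ement les deux suites exactes du Th\'eor\`eme pr\'ec\'edent sous les hypoth\`eses additionnelles $p>2$ totalement d\'ecompos\'e dans le corps totalement r\'eel $k$, puis \`a les concat\'ener pour obtenir \eqref{se1}.

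La premi\`ere \'etape est la contraction de la seconde suite exacte du Th\'eor\`eme. Comme $p$ est totalement d\'ecompos\'e, on a $k_{\mathfrak p}\simeq \Q_p$ pour tout ${\mathfrak p}\in S_k$, et donc $U_k\simeq (1+p\,\Z_p)^d$. Pour $p>2$, le $\Z_p$-module $1+p\,\Z_p$ est (topologiquement cyclique et) sans torsion, d'o\`u ${\rm tor}_{\Z_p}(U_k)=1$. D'autre part, $k$ \'etant totalement r\'eel et $p$ \'etant impair, le groupe $\mu_k$ des racines de l'unit\'e de $k$ d'ordre une puissance de $p$ est trivial, donc a fortiori $\iota(\mu_k)=1$. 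Le terme de gauche de la seconde suite exacte s'annule alors, fournissant l'isomorphisme ${\rm tor}_{\Z_p}(U_k/\overline E_k)\simeq {\mathcal R}_k$. En le combinant avec l'identification ${\rm tor}_{\Z_p}(U_k/\overline E_k)=U_k^*/\overline E_k$ d\'ej\`a \'etablie dans le texte (cons\'equence de ${\rm rg}_{\Z_p}(\overline E_k)=d-1$), on obtient $U_k^*/\overline E_k\simeq {\mathcal R}_k$.

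La deuxi\`eme \'etape est la r\'e\'ecriture du quotient de droite de la premi\`ere suite exacte. Puisque $p$ est non ramifi\'e dans $k$, on a $H_k\cap k_\infty=k$ (rappel\'e dans le sch\'ema), et la th\'eorie de Galois donne ${\rm Gal}(k_\infty H_k/k_\infty)\simeq {\rm Gal}(H_k/k)=\Cl_k$. En substituant cet isomorphisme ainsi que celui obtenu \`a l'\'etape pr\'ec\'edente dans la premi\`ere suite exacte du Th\'eor\`eme, on lit directement la suite exacte \eqref{se1} cherch\'ee.

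Il n'y a pas d'obstacle majeur dans ce plan~: le r\'esultat est un nettoyage technique des deux suites exactes du Th\'eor\`eme sous les hypoth\`eses restrictives de l'\'enonc\'e. Le seul point demandant une v\'erification soigneuse est la disparition simultan\'ee de $\mu_k$ et de ${\rm tor}_{\Z_p}(U_k)$ gr\^ace \`a la conjonction \go $p>2$, $k$ totalement r\'eel, $p$ totalement d\'ecompos\'e\gf, conjonction qui est la v\'eritable raison pour laquelle la seconde suite exacte se contracte en un simple isomorphisme.
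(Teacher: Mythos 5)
Your proof is correct, and it is essentially the only natural argument: the paper states this corollary without proof, treating it as an immediate consequence of the preceding theorem, and your verification fills in exactly the expected details. You correctly observe that $p>2$ totally decomposed gives $U_k\simeq(1+p\,\Z_p)^d$ which is $\Z_p$-torsion-free, that $k$ totally real with $p$ odd forces $\mu_k=1$, so the second sequence collapses to an isomorphism ${\rm tor}_{\Z_p}(U_k/\overline E_k)\simeq{\mathcal R}_k$; you combine this with the identification ${\rm tor}_{\Z_p}(U_k/\overline E_k)=U_k^*/\overline E_k$ already recorded in \S\,\ref{sub1}; and you convert ${\rm Gal}(k_\infty H_k/k_\infty)$ to $\Cl_k$ via $H_k\cap k_\infty=k$ and class field theory. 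No gap.
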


Introduisons les symboles d'Artin $\big( \frac{H_k^{\rm pr}/k}{\cdot} \big)$ et 
$\big( \frac{H_{K}^{\rm pr}/K}{\cdot} \big)$, respectivement
sur ${\mathcal J}_k :=  I_k \otimes \Z_p$ et ${\mathcal J}_K :=I_{K} \otimes \Z_p$,
o\`u $I_k$ et $I_{K}$ sont les groupes des id\'eaux \'etrangers \`a $p$ de $k$ et $K$.
Leurs images sont les groupes de Galois ${\mathcal A}_k$ et ${\mathcal A}_{K}$~;
leurs noyaux sont les groupes d'id\'eaux principaux 
infinit\'esimaux ${\mathcal P}_{k, \infty} \subset {\mathcal J}_k$ et 
${\mathcal P}_{K, \infty}  \subset {\mathcal J}_K$, o\`u ${\mathcal P}_{k, \infty}$ est 
l'ensemble des id\'eaux principaux $(x_\infty)$ o\`u $x_\infty \in k^\times  \otimes \Z_p$ 
est \'etranger \`a $p$ et d'image diagonale triviale dans $U_k$, 
et de m\^eme avec $K$ (\cite[Theorem III.2.4, Proposition III.2.4.1]{Gra1} et
\cite[\S\,2]{J1}). 

\smallskip
Le lien entre les normes d'id\'eaux dans $K/k$ et le groupe 
de torsion ${\mathcal T}_k$ est donn\'e par le r\'esultat suivant~:

\begin{theorem} \label{delta}
Soit  ${\mathfrak A} \in I_{K}$ (id\'eal ordinaire vu dans ${\mathcal J}_K$
pour $K=k_n$).
Alors il existe des id\'eaux ${\mathfrak a}, {\mathfrak t} \in {\mathcal J}_k$
et $(x_\infty) \in {\mathcal P}_{k, \infty}$, tels que~:

\smallskip
\centerline{${\rm N}_{K/k}({\mathfrak A}) = {\mathfrak a}^{p^n} \!\cdot\! 
{\mathfrak t} \cdot (x_\infty), \ \ 
\hbox{ avec {\footnotesize $\Big(\displaystyle  \frac{H_k^{\rm pr}/k}
{{\mathfrak a}} \Big)$}} \in \Gamma_\infty,\ 
 \hbox{{\footnotesize $\Big(\displaystyle  \frac{H_k^{\rm pr}/k}
{{\mathfrak t}} \Big)$}} \in {\mathcal T}_{k}$.}
 
\smallskip\noindent
Pour $n \gg 0$, ${\mathfrak a}^{p^n}$ est 
principal de la forme $(\alpha)$, $\alpha \in k^\times \otimes \Z_p$,
o\`u l'image diagonale $\iota(\alpha)$ de $\alpha$ dans $U_k$ v\'erifie
$\iota(\alpha) \equiv 1 \pmod {p^{n'}}$ pour $n' \to \infty$ avec $n$, et 
${\mathfrak t}$ est d'ordre fini modulo ${\mathcal P}_{k, \infty}$. 
\end{theorem}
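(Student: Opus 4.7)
Le plan consiste à exploiter la décomposition en produit direct $\mathcal{A}_k = \mathcal{T}_k \times \Gamma_\infty$ fournie par le composé direct $H_k^{\rm pr} = F \cdot k_\infty$ (conséquence de $F \cap k_\infty = k$), combinée avec la fonctorialité du symbole d'Artin vis-à-vis de la norme $N_{K/k}$ et avec la surjectivité de la réciprocité d'Artin sur les idéaux étrangers à $p$.

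On pose $\sigma := \bigl(\frac{H_k^{\rm pr}/k}{{\rm N}_{K/k}(\mathfrak{A})}\bigr)$ et on le décompose $\sigma = \sigma_\mathcal{T} \cdot \sigma_\Gamma$. Par compatibilité du symbole d'Artin avec la norme dans la tour $k \subset K \subset k_\infty$, l'image de $\sigma$ dans ${\rm Gal}(k_\infty/k) \simeq \Z_p$ tombe dans ${\rm Gal}(k_\infty/K) = p^n \Z_p$. Comme $\mathcal{T}_k$ se projette trivialement dans ${\rm Gal}(k_\infty/k)$ alors que $\Gamma_\infty$ s'y projette isomorphiquement, cela entraîne $\sigma_\Gamma \in \Gamma_\infty^{p^n}$, et puisque $\Gamma_\infty \simeq \Z_p$ est sans torsion il existe un unique $\gamma \in \Gamma_\infty$ avec $\gamma^{p^n} = \sigma_\Gamma$. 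Par Chebotarev (ou directement par la surjectivité de l'application d'Artin sur $I_k$), on choisit $\mathfrak{a}, \mathfrak{t} \in I_k$ de symboles respectifs $\gamma$ et $\sigma_\mathcal{T}$. L'idéal ${\rm N}_{K/k}(\mathfrak{A}) \cdot \mathfrak{a}^{-p^n} \cdot \mathfrak{t}^{-1}$ a alors un symbole d'Artin trivial, appartient donc à $\mathcal{P}_{k,\infty}$ et s'écrit $(x_\infty)$; la finitude de $\mathcal{T}_k$ assure que $\mathfrak{t}$ est d'ordre fini modulo $\mathcal{P}_{k,\infty}$.

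Pour la dernière assertion, $\Cl_k$ étant un $p$-groupe fini, pour $n$ assez grand $\mathfrak{a}^{p^n}$ est principal, égal à $(\alpha_0)$ pour un $\alpha_0 \in k^\times \otimes \Z_p$ étranger à $p$, de symbole d'Artin $\gamma^{p^n} \in \Gamma_\infty^{p^n}$. Via l'isomorphisme de réciprocité $U_k/\overline E_k \simeq {\rm Gal}(H_k^{\rm pr}/H_k) = {\rm Gal}(F/H_k) \times \Gamma_\infty$ qui envoie la classe de $\iota(\alpha_0)^{-1}$ sur le symbole d'Artin de $(\alpha_0)$, l'appartenance à $\Gamma_\infty^{p^n}$ force l'image de $\iota(\alpha_0)$ dans $U_k/\overline E_k$ à se situer dans un sous-groupe qui s'écrase vers $\{1\}$ quand $n \to \infty$. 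Par densité de $\iota(E_k)$ dans $\overline E_k$, on trouve $\varepsilon \in E_k$ tel que $\alpha := \alpha_0 \cdot \varepsilon^{-1}$ (qui vérifie encore $(\alpha) = \mathfrak{a}^{p^n}$) satisfasse $\iota(\alpha) \equiv 1 \pmod{p^{n'}}$ avec $n' \to \infty$ quand $n \to \infty$.

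L'obstacle principal réside dans cette dernière étape: traduire l'appartenance galoisienne \go symbole de $(\alpha_0)$ dans $\Gamma_\infty^{p^n}$\gf\ en une congruence $p$-adique effective sur $\iota(\alpha_0)$ requiert d'analyser finement la décomposition $U_k/\overline E_k \simeq (U_k^*/\overline E_k) \oplus \Gamma_\infty$ issue de la suite exacte~\eqref{se1} (où la partie libre est contrôlée par la norme absolue $N_{k/\Q}$), puis d'exploiter la densité de $\iota(E_k)$ dans $\overline E_k$ pour ajuster $\alpha_0$ par une unité globale, la précision finale~$n'$ dépendant de la structure du régulateur $p$-adique $\mathcal{R}_k$.
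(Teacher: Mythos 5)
Your argument is correct and follows essentially the same route as the paper: decompose the Artin symbol of ${\rm N}_{K/k}(\mathfrak{A})$ along ${\mathcal A}_k = \Gamma_\infty \oplus {\mathcal T}_k$, observe (via norm--restriction compatibility in the tower, which is exactly what the paper packages into the exact sequence~\eqref{suite} giving ${\rm N}_{K/k}({\mathcal A}_K)=\Gamma_\infty^{p^n}\oplus {\mathcal T}_k$) that the $\Gamma_\infty$-component lies in $\Gamma_\infty^{p^n}$, extract a $p^n$-th root in $\Gamma_\infty\simeq\Z_p$, and realize both components by Artin preimages, the kernel $\mathcal{P}_{k,\infty}$ supplying $(x_\infty)$. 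Two small remarks: take $\mathfrak{a},\mathfrak{t}$ in ${\mathcal J}_k=I_k\otimes\Z_p$ rather than in $I_k$, since only over $\Z_p$-ideals is the Artin map onto the full pro-$p$ group $\Gamma_\infty\oplus{\mathcal T}_k$; and your last paragraph, passing from the Galois constraint $\gamma^{p^n}\in\Gamma_\infty^{p^n}$ to the $p$-adic congruence $\iota(\alpha)\equiv 1\pmod{p^{n'}}$ through $U_k/\overline{E}_k\simeq{\rm Gal}(H_k^{\rm pr}/H_k)$ and a unit adjustment, supplies a detail the paper's own proof leaves implicit and is the correct reading.
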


\begin{proof}
L'application norme arithm\'etique ${\rm N}_{K/k}$, d\'efinie sur 
${\mathcal J}_K$, induit via les symboles d'Artin
la restriction (ou projection) ${\mathcal A}_{K} \to {\mathcal A}_k$ 
(not\'ee encore ${\rm N}_{K/k}$) qui s'exprime 
par la suite exacte suivante (cf. Sch\'ema)~:
\begin{equation}\label{suite}
1 \to {\rm Gal}(H_{K}^{\rm pr} / H_{k}^{\rm pr}) \too {\mathcal A}_{K} 
 \mathop {\tooo}^{ {\rm N}_{K/k}\ }{\rm N}_{K/k}({\mathcal A}_{K}) =  
\Gamma_\infty^{p^n} \oplus {\mathcal T}_{k} \to 1.
\end{equation}

\noindent
Il en r\'esulte que le symbole d'Artin de ${\rm N}_{K/k}({\mathfrak A})$
dans ${\mathcal A}_k$ se d\'ecompose de fa\c con unique sur
$\Gamma_\infty^{p^n} \oplus {\mathcal T}_{k}$ sous la forme
$\Big(\Frac{H_k^{\rm pr}/k}{{\rm N}_{K/k}({\mathfrak A})} \Big)
= \Big(\Frac{H_k^{\rm pr}/k}{{\mathfrak a}} \Big)^{p^n}\!\!\!\cdot
\Big(\Frac{H_k^{\rm pr}/k}{{\mathfrak t}} \Big)$,
et ainsi ${\rm N}_{K/k}({\mathfrak A})$ peut s'\'ecrire dans ${\mathcal J}_k$
comme indiqu\'e dans l'\'enonc\'e. $\square$
\end{proof}

Le fait que les ${\rm N}_{K/k}({\mathfrak A})$
interviennent de fa\c con cruciale pour la conjecture de Greenberg
est justifi\'e, dans la sous-section suivante, au moyen du \go calcul
du $p$-groupe des classes de $K=k_n$\gf par l'algorithme de d\'evissage 
classique qui n'utilise que les propri\'et\'es \'el\'ementaires de ces normes,
la conjecture de Greenberg \'etant \'equivalente au fait que ces algorithmes
sont born\'es ind\'ependamment de $n$ (Th\'eor\`eme \ref{thmp}).
Il suffit alors d'hypoth\`eses naturelles gouvernant ces normes
pour en d\'eduire cette propri\'et\'e.

\smallskip
Or on verra que ${\rm N}_{K/k}({\mathfrak A}) = 
(\alpha \cdot x_\infty) \cdot {\mathfrak t}$ ne
d\'epend que de ${\mathfrak t}$ sur le plan $p$-groupe de classes
de $k$ (${\rm N}_{K/k}({\mathfrak A})$ et ${\mathfrak t}$ d\'efinissent la 
m\^eme classe) et sur le plan $p$-adique 
($\iota(\alpha \cdot x_\infty)=\iota(\alpha)$
est arbitrairement proche de $1$ et ${\mathfrak t}$ d'ordre fini
modulo ${\mathcal P}_{k, \infty}$). Lorsque ${\mathfrak t}$ est 
principal, le lien subtil entre ${\mathfrak t}$ et le r\'egulateur 
${\mathcal R}_{k}$, est pr\'ecis\'e dans la Remarque \ref{norm} 
suivant le Th\'eor\`eme \ref{thmp}. 

\smallskip
Tout ceci est essentiel car ${\rm N}_{K/k}({\mathfrak A})$ ne 
d\'epend alors (via le Corollaire \ref{sefond}) que des invariants $\Cl_k$ et 
${\mathcal R}_{k}$ du groupe {\it fini} ${\mathcal T}_{k}$ qui devient,
quel que soit $K=k_n$, $n \gg 0$, un espace probabilis\'e explicite
(num\'eriquement parlant).

\section{Filtration des $\Cl_{k_n}$ -- Facteur classes 
et facteur normique}\label{sub2}
Soit $K := k_n \subset k_\infty$ de degr\'e $p^n$ sur $k$ et soit 
$G:=G_n:= {\rm Gal}(K/k) =: \langle \sigma \rangle$.
Dans le cadre de l'algorithme g\'en\'eral de calcul du $p$-groupe 
des classes $\Cl_K$ de $K$ par d\'evissage, 
on dispose d'une filtration, au moyen de sous-groupes
$M_i^n := \cl_K({\mathcal I}_i^n)$, $i \geq 0$, 
${\mathcal I}_i^n \subset I_K$, ainsi d\'efinie avec 
$M^n := \Cl_{K}$ et $M^n_0:= 1$ (d'apr\`es \cite[\S\,6.1]{Gra3})~:

\begin{definition} \label{def1} Pour $n \geq 1$ fix\'e, $(M^n_i)_{i \geq 0}$ 
est la $i$-suite de sous-${G}$-modules de $M^n$ d\'efinie par
$M^n_{i+1}/M^n_i := (M^n/M^n_i)^{G}$, \  pour $0\leq i \leq m_n-1$,
o\`u $m_n$ est le plus petit entier $i$ tel que $M^n_i = M^n$ 
(i.e., tel que $M^n_{i+1} = M^n_i$).
\end{definition}

On a alors classiquement~:

\begin{theorem} \label{filtration}
La filtration pr\'ec\'edente a les propri\'et\'es suivantes~:

\smallskip
 (i) Pour $i=0$, $M^n_1 = (M^n)^{G}$ (groupe des classes ambiges
dans $K/k$). 

\smallskip
 (ii) On a 
$M^n_i= \{c \in M^n, \, c^{(1-\sigma)^i} =1 \}$, pour tout $i \geq 0$.

\smallskip
 (iii) Pour $n$ fix\'e, la $i$-suite des $\order  (M^n_{i+1}/M^n_i)$, $0 \leq i \leq m_n$, est 
{\it d\'ecrois\-sante} vers $1$ et major\'ee par $\order  M_1^n$  en raison des injections~:
$$M^n_{i+1}/M^n_i \hookrightarrow M^n_i/M^n_{i-1}
\hookrightarrow \cdots \hookrightarrow M^n_2/M^n_1 \hookrightarrow M_1^n$$
d\'efinies par l'op\'eration de $1-\sigma$.
\end{theorem}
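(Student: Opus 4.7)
The plan rests on a single tautological observation: the defining relation $M^n_{i+1}/M^n_i = (M^n/M^n_i)^{G}$ unwinds to
$$c \in M^n_{i+1} \iff c^{1-\sigma} \in M^n_i.$$
Assertion (i) then drops out immediately, since $M^n_0 = 1$ gives $M^n_1 = (M^n)^{G}$, the group of ambiguous classes in $K/k$.

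For (ii), I would induct on $i$, starting from the convention $M^n_0 = 1$ (where $(1-\sigma)^0$ acts as the identity). In the inductive step, the unwound definition reads $c \in M^n_{i+1} \iff c^{1-\sigma} \in M^n_i$, and the induction hypothesis rewrites the right-hand side as $(c^{1-\sigma})^{(1-\sigma)^i} = c^{(1-\sigma)^{i+1}} = 1$, which gives the characterization.

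The substantive part of (iii) is to produce the injection $M^n_{i+1}/M^n_i \hookrightarrow M^n_i/M^n_{i-1}$ induced by the map $c \mapsto c^{1-\sigma}$. Well-definedness uses the basic identity at two successive levels: $c \in M^n_{i+1}$ sends $c^{1-\sigma}$ into $M^n_i$, and $c \in M^n_i$ sends $c^{1-\sigma}$ into $M^n_{i-1}$. Injectivity is immediate from (ii): if $c \in M^n_{i+1}$ satisfies $c^{1-\sigma} \in M^n_{i-1}$, then $c^{(1-\sigma)^i} = 1$, hence $c \in M^n_i$. Composing these injections down to $i=1$ embeds $M^n_{i+1}/M^n_i$ into $M^n_1$, which yields the monotonicity of the orders and the upper bound $\order M^n_1$.

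Finally, to conclude that the orders decrease to $1$, I would combine finiteness with the standard $p$-group argument: as long as $M^n_i \subsetneq M^n$, the finite $p$-group $M^n/M^n_i$ has nontrivial $G$-fixed points (because $G$ is itself a $p$-group), hence $M^n_{i+1} \supsetneq M^n_i$; the filtration therefore exhausts $M^n$ in finitely many steps, and from that index on the successive quotients are trivial. No genuine obstacle appears in this argument, which is entirely formal once the definition is unwound; the only point requiring real care is the bookkeeping of indices when setting up the $(1-\sigma)$-induced injections between consecutive quotients.
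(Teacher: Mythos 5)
Your proof is correct. The paper gives no proof of this theorem, presenting it as a classical fact (``On a alors classiquement''), and your argument is exactly the standard one: the unwinding $c\in M^n_{i+1}\iff c^{1-\sigma}\in M^n_i$, induction for (ii), the $(1-\sigma)$-induced injections between consecutive quotients with injectivity read off from (ii), and the finite-$p$-group fixed-point argument (using that $G=\mathrm{Gal}(k_n/k)$ is a $p$-group) to guarantee the filtration exhausts $M^n$, so the $i$-sequence of orders decreases to $1$ and is bounded by $\order M^n_1$.
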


On en d\'eduit que les groupes ${\rm N}_{K/k}({\mathcal I}_i^n)$, 
qui repr\'esentent ${\rm N}_{K/k}(M^n_i) \subseteq \Cl_k$,
sont engendr\'es, modulo des $(\alpha)$ \go quasi-infinit\'esi\-maux\gf 
(i.e., $\iota(\alpha)$ tr\`es proche de~$1$ dans $U_k$), par des 
${\mathfrak t} \in  {\mathcal I}_{k}$ d'ordre fini modulo 
${\mathcal P}_{k, \infty}$ (Th\'eor\`eme \ref{delta}), et que les groupes
de nombres~:
\begin{equation}\label{lambda}
\Lambda_i^n := \{x \in k^\times,\  (x) \in {\rm N}_{K/k}({\mathcal I}_i^n) \},
\end{equation}
qui contiennent $E_k$, sont obtenus via les id\'eaux principaux $(x)$ de la forme~:
$$(x) = (\alpha \cdot x_\infty)\cdot {\mathfrak t}_{(x)}, $$
o\`u ${\mathfrak t}_{(x)} =: (x')$ est un id\'eal principal 
d'ordre fini modulo ${\mathcal P}_{k, \infty}$.

\smallskip
Rappelons que, pour $n \geq 1$ fix\'e, une g\'en\'e\-ralisation de la formule des
classes ambiges de Chevalley, que nous avons red\'emontr\'ee dans \cite{Gra2}, conduit 
\`a la $i$-suite d'entiers d\'efinie, au moyen des groupes
${\rm N}_{K/k}(M^n_i)$ et $\Lambda_i^n$, par~:
\begin{equation}\label{cr}
\begin{aligned}
& \order  \big( M^n_{i+1} / M^n_i \big)=
\frac{\order  \Cl_{k}}{\order {\rm N}_{K/k}( M^n_i)} \cdot  
\frac{p^{n \cdot (d -1)}}{(\Lambda_i^n :  \Lambda_i^n \cap {\rm N}_{K/k}(K^\times))}, \\
  \hbox{o\`u~: \hspace{0.5cm}}  & \\
& \hspace{1.0cm} \displaystyle  \frac{\order  \Cl_{k}}{\order {\rm N}_{K/k}( M^n_i)}
\hspace{0.5cm} \& \hspace{0.5cm}
\displaystyle  \frac{p^{n \cdot (d -1)}}{(\Lambda_i^n :  \Lambda_i^n \cap {\rm N}_{K/k}(K^\times))}
\end{aligned}
\end{equation}
sont appel\'es respectivement  {\it le facteur classes} et {\it le facteur normique}
\`a l'\'etape~$i$ de l'algorithme dans $k_n$. 
La propri\'et\'e essentielle de ces facteurs est que le premier est trivialement un
diviseur de $\order \Cl_{k}$ tandis que le second est (non trivialement)
un diviseur de $\order {\mathcal R}_k$ \cite[Th\'eor\`eme 4.8 (iii)]{Gra3}.
Ils sont ind\'ependants du choix des \'el\'ements des
${\mathcal I}_i^n$ modulo des id\'eaux principaux de $K$.
Comme $\order \Cl_{K} = \prod_{i=0}^{m_n-1} \order (M_{i+1}^n/M_i^n)$, la conjecture 
de Greenberg revient \`a estimer le nombre de pas $m_n$ de l'algorithme. 
Or on a \`a ce sujet le r\'esultat essentiel suivant \cite[Th\'eor\`eme 6.3]{Gra3},
qui montre d\'ej\`a le r\^ole crucial jou\'e par ${\mathcal T}_k$~:

\begin{theorem} \label{thmp}
Pour tout $n \gg 0$, on a
les in\'egalit\'es suivantes (pour ${\mathcal T}_k \ne 1$)%
\,\footnote{Le cas ${\mathcal T}_k=1$ implique
trivialement $\lambda=\mu=\nu=0$~; c'est m\^eme \'equivalent dans le cas 
$p$-d\'ecompos\'e. De fait, d'apr\`es \cite[Th\'eor\`eme 4.7]{Gra3}, on a
$\order \Cl_{K}^{G} = \order {\mathcal T}_{k}$ pour tout $n \gg 0$.}~:
\begin{equation}\label{inegal}
\lambda \cdot n + \mu \cdot p^n +  \nu\, \geq\,
m_n \, \geq \, \frac{1}{v_p(\order {\mathcal T}_k)} \,
\big (\lambda \cdot n + \mu \cdot p^n +  \nu \big),
\end{equation}
o\`u $\lambda, \mu,  \nu$ sont les invariants d'Iwasawa et 
$v_p$ la valuation $p$-adique.

\smallskip\noindent
Par cons\'equent, on a $\lambda=\mu=0$ si et seulement si le nombre de
pas  $m_n$ de l'algorithme dans $k_n$ est born\'e ind\'ependamment de $n$. 
\end{theorem}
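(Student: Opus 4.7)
Le point de d\'epart est la d\'ecomposition multiplicative
$\order \Cl_{K} = \prod_{i=0}^{m_n-1} \order (M^n_{i+1}/M^n_i)$
donn\'ee par la filtration de la D\'efinition \ref{def1}, ce qui fournit en valuation $p$-adique
$$v_p(\order \Cl_{K}) = \sum_{i=0}^{m_n-1} v_p\bigl(\order (M^n_{i+1}/M^n_i)\bigr).$$
D'apr\`es la th\'eorie d'Iwasawa, pour tout $n$ assez grand on a l'identit\'e
$v_p(\order \Cl_{K}) = \lambda\cdot n + \mu\cdot p^n + \nu$. Toute la preuve consiste alors \`a
encadrer chacun des $m_n$ termes de la somme entre $1$ et $v_p(\order \mathcal{T}_k)$.

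Pour la majoration de $m_n$, j'utiliserais le fait \'el\'ementaire que, par d\'efinition m\^eme de
l'entier $m_n$, chaque quotient $M^n_{i+1}/M^n_i$, pour $0\leq i \leq m_n-1$, est un
$p$-groupe \emph{non trivial}, donc $v_p(\order (M^n_{i+1}/M^n_i))\geq 1$. En sommant il vient
$\lambda\cdot n + \mu\cdot p^n + \nu \geq m_n$, ce qui est la moiti\'e gauche de \eqref{inegal}.

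Pour la minoration de $m_n$, j'invoquerais la cha\^\i ne d'injections du
Th\'eor\`eme \ref{filtration}\,(iii) donn\'ee par l'op\'eration $1-\sigma$~: chaque quotient
$M^n_{i+1}/M^n_i$ se plonge dans $M^n_1 = (\Cl_K)^G$. Or, pour $n \gg 0$, la note de
bas de page affirme (en se r\'ef\'erant \`a \cite[Th\'eor\`eme 4.7]{Gra3}) que
$\order (\Cl_K)^G = \order \mathcal{T}_k$. On en d\'eduit
$v_p(\order (M^n_{i+1}/M^n_i)) \leq v_p(\order \mathcal{T}_k)$ pour chaque $i$, d'o\`u, en sommant sur les $m_n$ \'etapes,
$$\lambda\cdot n + \mu\cdot p^n + \nu \;\leq\; m_n \cdot v_p(\order \mathcal{T}_k),$$
ce qui donne la moiti\'e droite de \eqref{inegal}. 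Le point le plus d\'elicat est en fait la r\'ef\'erence
\`a \cite[Th\'eor\`eme 4.7]{Gra3}, c'est-\`a-dire la stabilisation $\order (\Cl_K)^G = \order \mathcal{T}_k$ pour $n\gg 0$~:
c'est l\`a que l'hypoth\`ese \emph{totalement d\'ecompos\'e} et la conjecture de Leopoldt pour la tour
interviennent r\'eellement, via la suite exacte \eqref{suite} et la $\Z_p$-libert\'e de $\Gamma_\infty$.

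La derni\`ere assertion r\'esulte imm\'ediatement du double encadrement~: puisque $\nu$ est une constante
(pour $n\gg 0$), la suite $\lambda\cdot n + \mu\cdot p^n + \nu$ est born\'ee si et seulement si
$\lambda = \mu = 0$, et, par \eqref{inegal}, ceci \'equivaut au fait que $m_n$ reste born\'e
ind\'ependamment de $n$. Ceci ach\`eve le plan de preuve.
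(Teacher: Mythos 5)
Your proof is correct, and its structure is the natural one: it assembles precisely the ingredients the paper lays out around the statement. The paper itself does not reprove this theorem but cites it from \cite[Th\'eor\`eme 6.3]{Gra3}; nevertheless, one can check that your argument uses exactly the tools made available here and yields the claimed bounds. In detail: the product formula $\order \Cl_K = \prod_{i=0}^{m_n-1}\order(M^n_{i+1}/M^n_i)$ stated just before the theorem, the injections of Th\'eor\`eme \ref{filtration}\,(iii) showing each quotient embeds into $M^n_1 = \Cl_K^G$, the stabilization $\order \Cl_K^G = \order \mathcal{T}_k$ for $n\gg0$ recalled in the footnote from \cite[Th\'eor\`eme 4.7]{Gra3}, and the Iwasawa asymptotic $v_p(\order \Cl_K) = \lambda n + \mu p^n + \nu$. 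One small point you could make explicit in the upper bound $m_n \leq \lambda n + \mu p^n + \nu$: the reason each $M^n_{i+1}/M^n_i$ is nontrivial for $i<m_n$ is that $M^n_{i+1}/M^n_i = (M^n/M^n_i)^{G}$, and a $p$-group $G$ acting on the nontrivial $p$-group $M^n/M^n_i$ necessarily has nontrivial fixed points, so the filtration strictly increases until it exhausts $M^n$. With that spelled out, the argument is complete and the equivalence in the final assertion follows immediately from the two-sided bound, as you note.
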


Or $m_n$ (pour $n$ fix\'e) d\'epend de la $i$-progression des deux facteurs
de \eqref{cr} et on constate, en pratique, que sous r\'eserve de 
probabilit\'es naturelles de r\'epartition sur les composantes
$\Cl_k$ et ${\mathcal R}_k$ de ${\mathcal T}_k$, 
en relation avec le Th\'eor\`eme \ref{delta}, chacun des deux facteurs est 
rapidement rendu trivial.

\smallskip
On rappelle les r\'esultats suivants \cite[Lemme 7.1]{Gra3}, 
o\`u $i$ est ici fix\'e~:

 (i) La $n$-suite $\displaystyle  \frac{\order \Cl_{k}}{\order {\rm N}_{k_n/k}( M^n_i) } 
=: p^{c_i^n}$ est {\it croissante} stationnaire \`a une valeur maximale
$p^{c_i^{\infty}} \mid \order  \Cl_k$. 
Ce facteur est rapidement trivialis\'e (i.e., $p^{c_i^{\infty}}=1$ pour 
tout $i \geq i_1$ convenable) sous r\'eserve que les classes 
$\cl_k({\rm N}_{k_n/k}({\mathfrak A})) = \cl_k({\mathfrak t})$ 
(o\`u les ${\mathfrak A}$ sont des {\it id\'eaux al\'eatoires de $k_n$} 
associ\'es \`a l'algorithme et
${\mathfrak t}$ la composante donn\'ee par la relation
du Th\'eor\`eme \ref{delta}), se r\'epartissent de fa\c con uniforme 
dans le groupe des classes de $k$ (i.e., le recouvrent selon
les densit\'es naturelles).

 (ii) La $n$-suite $\displaystyle  \frac{p^{n \cdot (d -1)}}
{(\Lambda_i^n :  \Lambda_i^n \cap {\rm N}_{k_n/k}(k_n^\times))} =: p^{\rho_i^n}$
est {\it croissante} stationnaire  \`a une valeur maximale
$p^{\rho_i^{\infty}} \mid \order {\mathcal R}_k$.
On a $(\Lambda_i^n : \Lambda_i^n \cap {\rm N}_{k_n/k}(k_n^\times)) = 
p^{n \cdot (d -1)}$ (i.e., $p^{\rho_i^n}=1$ pour tout $i \geq i_2$ convenable)
d\`es que les symboles normiques de Hasse de {\it suffisamment} de 
$x \in \Lambda_i^n$ associ\'es \`a l'algorithme engendrent le sous-groupe 
$\Omega(k_n/k)$ de 
$\bigoplus_{{\mathfrak p} \in S_k} I_{\mathfrak p} = G_n^d$
(o\`u les $I_{\mathfrak p}$ sont les groupes d'inertie), form\'e des familles 
{\footnotesize 
$\big(\big(\displaystyle  \frac{x, k_n/k}{{\mathfrak p}} \big)\big)_{{\mathfrak p} \in S_k}$} 
v\'erifiant la formule du produit~; or chaque symbole d\'epend essentiellement du
${\mathfrak p}$-quotient de Fermat ${\mathfrak p}^{\delta_{\mathfrak p}(x)}$ 
de $x$, o\`u l'on a pos\'e~:
\begin{equation}\label{qf}
\hbox{\ \ {\footnotesize $\displaystyle  \frac{x^{p-1}-1}{p}$} $=: \prd_{{\mathfrak p} \in S_k}
{\mathfrak p}^{\delta_{\mathfrak p}(x)} \cdot {\mathfrak b}_{(x)}$, 
$\ \ \delta_{\mathfrak p}(x) \geq 0$,  ${\mathfrak b}_{(x)}$ \'etranger \`a $p$}.
\end{equation}
\noindent
Voir  \cite[Th\'eor\`eme 4.4]{Gra3} pour le calcul de ces symboles.
L'ordre de {\footnotesize $\big(\displaystyle \frac{x, k_n/k}{{\mathfrak p}} \big)$} \'etant  
$p^{n - \delta_{\mathfrak p}(x)}$, ce symbole engendre $G_n$ si 
et seulement si $\delta_{\mathfrak p}(x)=0$, 
a priori de probabilit\'e $1-\frac{1}{p}$, $\delta_{\mathfrak p}(x)\geq r$ 
\'etant de probabilit\'e $\frac{1}{p^r}$. D'o\`u aussi une probable
trivialisation rapide du facteur normique au cours de l'algorithme.

\begin{remark}\label{norm}{\rm 
Nous ne revenons pas sur les r\'esultats de 
\cite[Th\'eor\`emes 4.7, 4.8, 4.10]{Gra3}
montrant que les facteurs classes et normiques s'interpr\`etent via
${\mathcal T}_k$, mais pr\'ecisons l'aspect pratique
et probabiliste pour les futures \'etudes num\'eriques.
Soit $x \in \Lambda_i^n$.
On a $(x) = {\rm N}_{k_n/k}({\mathfrak A})$, 
${\mathfrak A} \in {\mathcal I}_i^n$
(cf. Th\'eor\`eme \ref{filtration} \& \eqref{lambda}),
et d'apr\`es le Th\'eor\`eme \ref{delta}, 
$(x) = (\alpha \cdot x_\infty) \cdot {\mathfrak t}_{(x)}$, 
$\big(\frac{H_k^{\rm pr}/k}{{\mathfrak t}_{(x)}} \big) \in 
{\rm Gal} (H_k^{\rm pr} / k_\infty H_k)$, ce qui conduit \`a 
$x = \alpha \cdot x_\infty \cdot x'$, o\`u $(x')={\mathfrak t}_{(x)}$
(pour tout $n$ assez grand).

\smallskip
Ensuite, puisque ${\rm Gal}(H_k^{\rm pr}/k_\infty \,H_k)
\simeq U_k^*/\overline  E_k$ 
est d'exposant fini $p^e$,  on a $x'{}^{p^e} = x'_\infty\! \cdot \varepsilon'$, 
$\varepsilon' \in E_k \otimes \Z_p$, $x'_\infty$ infinit\'esimal, d'o\`u
${\rm N}_{k/\Q}(\iota(x'))=1$ dans $U_\Q$ et l'image de $x'$ 
est d\'efinie dans $U_k^*/\overline  E_k = {\mathcal R}_{k}$, donc ne prend
qu'un nombre fini de valeurs.

\smallskip
Puisque dans $U_k$ (pour $n$ assez grand) 
$\delta_{\mathfrak p}(x) =
\delta_{\mathfrak p}(\iota(x'))$, nous dirons, par abus, que la famille 
$(\delta_{\mathfrak p}(x))_{{\mathfrak p} \mid p}$, $x \in \Lambda_i^n/E_k$, 
varie dans le domaine (fini) d\'efini par l'ensemble 
des familles $(\delta_{\mathfrak p}(\varepsilon))_{{\mathfrak p} \mid p}$, 
$\varepsilon  \in E_k$~; c'est un invariant canonique du corps $k$. 
Par exemple, dans le cas quadratique, d'unit\'e fondamentale $\varepsilon$, 
pour $\delta_{\mathfrak p}(\varepsilon) = r >0$ donn\'e on \'ecrira que
$\delta_{\mathfrak p}(x E_k) \in [0, r]$ selon des probabilit\'es naturelles.}
\end{remark}

Nous avons formul\'e dans \cite[Hypoth\`ese 7.9]{Gra3} les hypoth\`eses
suivantes, au sujet de la composante ${\mathfrak t}$ 
de ${\rm N}_{K/k}({\mathfrak A})$ d\'efinie au Th\'eor\`eme \ref{delta},
que nous nous proposons de tester num\'eriquement~:

\medskip\noindent
{\bf Hypoth\`eses (H)}
{\it On suppose que les id\'eaux ${\mathfrak A}$ (\'etrangers \`a $p$) 
de $K=k_n$, obtenus au cours de l'algorithme de calcul 
par d\'evissage de $\order \Cl_K$, 
d\'efinissent une variable al\'eatoire, et qu'il en est de m\^eme 
pour la composante ${\mathfrak t}$ des ${\rm N}_{K/k}({\mathfrak A})$. 
Enfin on suppose que les symboles d'Artin $\big(\frac{F/k}{{\mathfrak t}} \big)$ 
se r\'epartissent uniform\'ement dans ${\rm Gal}(F/k) \simeq {\mathcal T}_{k}$.}

\medskip
Le terme \go uniform\'ement\gf doit \^etre compris comme une 
propri\'et\'e de recouvrement selon les densit\'es (ou probabilit\'es) 
naturelles en $\frac{1}{p^r}$.
Si ces hypoth\`eses sont v\'erifi\'ees, ceci a les cons\'equences suivantes~:

\medskip
 (i) Les classes des id\'eaux ${\mathfrak t}$ sont r\'eparties uniform\'ement 
dans $\Cl_k$.

\smallskip
 (ii) Dans le cas principal ${\mathfrak t}=(x')$, les images des $x'$ dans 
$U_k^*/\overline E_k = {\mathcal R}_{k}$ parcourent uniform\'ement 
cet ensemble fini.

\smallskip
 (iii) L'heuristique principale de \cite{Gra3}
est que les nombres de pas $m_n$ des algorithmes d\'ependent grosso 
modo de lois binomiales sur l'espace de probabilit\'e fini ${\mathcal T}_{k}$
et sont {\it presque s\^urement uniform\'ement born\'es} lorsque $n \to \infty$. 

\medskip
Les limites $p^{c_i^\infty + \rho_i^\infty}$ des 
$n$-suites $\order \big( M^n_{i+1} / M^n_i \big)$, $n\to\infty$,
constituent une $i$-suite {\it d\'ecroissante stationnaire} 
de diviseurs de $\order {\mathcal T}_{k}$ \cite[Lemme 7.2]{Gra3}.
Or si le diviseur limite est diff\'erent de $1$, 
c'est que l'hypoth\`ese de r\'epartition pr\'ec\'edente n'est pas v\'erifi\'ee, 
ce qui, dans un ensemble fini, suppose l'exis\-tence d'une \go condition \'etrange\gf 
au niveau des composantes ${\mathfrak t}$ successivement obtenues, ce 
que la pratique num\'erique que nous allons mettre en \oe uvre devrait 
rendre absurde~; en effet, ceci voudrait dire pratiquement
que si $\lambda \geq 1$ ou $\mu \geq 1$ alors, pour $n$ fix\'e
{\it arbitrairement grand}, l'algorithme de d\'evissage \go bouclerait\gf 
$O(\lambda \cdot n + \mu \cdot p^n)$ fois de suite (in\'egalit\'es \eqref{inegal}), 
ce qui constitue un non-sens num\'erique mais sugg\`ere l'immense difficult\'e
pour \'etablir une contradiction effective conduisant \`a une preuve 
(certainement plus analytique qu'alg\'ebrique) de la conjecture de Greenberg.

\section{Calcul des principaux invariants -- Classes logarithmiques}\label{sec2}

On se place dans le cas quadratique r\'eel $k=\Q(\sqrt D)$, 
d'unit\'e fondamentale $\varepsilon$, avec $p>2$ 
d\'ecompos\'e, et on utilise des programmes PARI \cite{P}
pour les calculs. Ici $\Omega(k_n/k) \simeq \Z/p^n \Z$ et les questions
normiques ne d\'ependent que des $\delta_{\mathfrak p}(x)$. Mais
$\delta_{\mathfrak p}(x)$,
not\'e $\delta_p(x)$, ne d\'epend pas de ${\mathfrak p} \in S_k$ 
lorsque {\it $x$ est \'etranger \`a~$p$} \cite[D\'efinition 4.1 \& \S\,5.1]{Gra3}.
Le contexte non trivial est $\delta_p(\varepsilon)>0$ (sinon 
${\mathcal R}_k=1$ et seul le facteur classes est concern\'e).
Ensuite, pour les $x$, de l'algorithme,
$\delta_p(x)$ ne d\'epend que de l'id\'eal $(x)$ si 
$\delta_p(x) < \delta_p(\varepsilon)$~; en effet,
 si $\delta_p(\varepsilon)=r$ et $\delta_p(x)<r$, alors 
$\delta_p(x \cdot \varepsilon') = \delta_p(x)$ quelle que soit 
$\varepsilon' \in E_k$. Enfin le facteur normique se trivialise
d\`es qu'on obtient $x \in \Lambda_i^n$ avec $\delta_p(x)=0$.

\smallskip
Rappelons que les $x \in \Lambda_i^n$, par nature normes 
d'id\'eaux \'etrangers \`a $p$ dans $k_n/k$, sont partout normes locales 
en dehors de $p$, auquel cas, ${\rm N}_{k/\Q}(x) \equiv 1 \pmod {p^{n+1}}$
(en effet, ${\rm N}_{k/\Q}\big ((x) \big)$ peut s'\'ecrire 
${\rm N}_{\Q_n/\Q}({\mathfrak B})$ pour un id\'eal ${\mathfrak B}$ 
de $\Q_n$, or le groupe de normes associ\'e \`a $\Q_n/\Q$ par le 
corps de classes est l'ensemble des $(1+  a \cdot  p^{n+1})$, $a \in \Z$). 
En pratique il n'est pas n\'ecessaire de prendre $n$ tr\`es grand car les 
$\delta_p(\varepsilon)$ sont limit\'es, et dans le cas quadratique,
le r\'egulateur $p$-adique normalis\'e est ${\mathcal R}_k \sim 
\frac{1}{p} \,{\rm log}(\varepsilon) \sim p_{}^{\delta_p(\varepsilon)}$
(\'egalit\'es \`a un facteur unit\'e $p$-adique pr\`es)~; 
on fixe $n_0:=n+1$ (on a alors $K=k_{n}$ et des congruences modulo 
$p^{n_0}$ pour les calculs des symboles normiques). 
Mais on fera en sorte que $n > \delta_p(\varepsilon)$.

\subsection{\bf Programme de calcul de $h$, $\delta_p(\varepsilon)$, 
$\delta_{\mathfrak p}(\eta_p)$ -- Condition suffisante}\label{2sub1}

Il donne la liste des discriminants $D$ pour lesquels $k=\Q(\sqrt D)$ r\'epond \`a 
certaines sp\'ecifications destin\'ees \`a tester l'influence des param\`etres
suivants~: nombre de classes $h$, valeurs de $\delta_p(\varepsilon)$ et 
$\delta_{\mathfrak p}(\eta_p)$, o\`u $\eta_p$ est une 
$S_k$-unit\'e fondamentale, donn\'ee par ${\mathfrak p}^{h_0}=
(\eta_p)$ o\`u $h_0$ est l'ordre de la classe de~${\mathfrak p}$, et qui 
n'est utilis\'ee que pour tester la condition suffisante \cite[Th\'eor\`eme 3.4]{Gra3} 
conduisant \`a $\lambda = \mu =0$ (condition qui \'equivaut \`a 
la trivialit\'e du groupe des classes logarithmiques $\widetilde {\Cl_k}$ 
\cite[Th\'eor\`eme 17]{J2}, invariant sur lequel nous reviendrons 
au \S\,\ref{log})~; elle suppose la r\'ealisation des deux conditions suivantes~:

\smallskip
 (i) $\Cl_k^{S_k}=1$ (i.e., ${\mathfrak p}$ engendre le $p$-groupe 
des classes de $k$), ce qui \'equivaut \`a $v_p(h_0) = v_p(h)$,

\smallskip
 (ii)  $\delta_p(\varepsilon) = 0 \ \ \hbox{ou}  \ \ \delta_{\mathfrak p}(\eta_p) = 0$.

\medskip
Si (i) (resp. (ii)) n'a pas lieu, le programme indique la nature du probl\`eme par 
PB-CLASSES (resp. PB-NORMIQUE). En l'absence des deux alertes, la 
condition suffisante a lieu et $k$ v\'erifie la conjecture de Greenberg,
ce qui fournit beaucoup d'exemples avec des invariants $\Cl_k$ et 
${\mathcal R}_k$ non triviaux.

\smallskip
On peut fixer un ordre de grandeur \`a $\delta_p(\varepsilon)$ 
pour les discriminants retenus
en \'ecrivant dans le programme ${\sf zmax}=1/p^g$
pour $\delta_p(\varepsilon) \geq g \geq 0$, et de m\^eme on peut imposer 
des conditions sur $h$ comme par exemple $h\equiv 0 \pmod p$ qui se
traduit par ${\sf vh}>0$. L'unit\'e $\varepsilon$ est donn\'ee 
dans {\sf E} et $\eta_p$ dans {\sf Eta}.
On doit enfin choisir le nombre premier ${\sf p}=p$ 
et les bornes ${\sf bD}$, ${\sf BD}$ du discriminant~:

\footnotesize
\begin{verbatim}
=============================================================================
{p=3;bD=2;BD=5*10^5;n=8;n0=n+1;zmax=1/p^2;y=x;
for(D=bD,BD,e=valuation(D,2);M=D/2^e;if(core(M)!=M,next);
if((e==1||e>3)||(e==0 & Mod(M,4)!=1)||(e==2 & Mod(M,4)==1)
|| kronecker(D,p)!=1,next);Q=x^2-D;K=bnfinit(Q,1);
h=component(component(bnrinit(K,1),5),1);vh=valuation(h,p);if(vh>=1,
E=component(component(component(K,8),5),1);Su=bnfsunit(K,idealprimedec(K,p));
pi1=component(component(Su,1),1);pi2=component(pi1,2)*x-component(pi1,1);
Pi1=pi1^n0;Pi2=pi2^n0;Z=bezout(Pi1,Pi2);U1=component(Z,1);U2=component(Z,2);
P=y^2-Mod(D,p^n0);Y=Mod(y,P);x=Y;A1=eval(U1);A2=eval(U2);
B1=eval(Pi1);B2=eval(Pi2);b1=eval(pi1);b2=eval(pi2);e=eval(E);
XPpi=Mod(A1*B1+A2*B2*b2,P);XPe=Mod(A1*B1+A2*B2*e,P);x=y;
hs=norm(Mod(pi1,Q));h0=valuation(hs,p);vh0=valuation(h0,p);delta=vh-vh0;
npi=norm(XPpi)^(p-1);ne=norm(XPe)^(p-1);zpi=znorder(npi)/p^n;
ze=znorder(ne)/p^n;if(ze<=zmax,if(delta!=0,print("PB-CLASSES"));
if(zpi+ze<1,print("PB-NORMIQUE"));print("D=",D," h=",h," E=",E);
print("p=",p," Eta=",pi1);print(zpi," ",ze);print(" "))))}
=============================================================================
\end{verbatim}
\normalsize

Les valeurs $\Frac{1}{p^{\delta_{\mathfrak p}(\eta_p)}}$ et 
$\Frac{1}{p^{\delta_p(\varepsilon)}}$ sont donn\'ees dans ${\sf zpi}$
et ${\sf ze}$.

\smallskip
D\`es que $p$ cro\^it, il y a rar\'efaction des cas exceptionnels~; 
par exemple pour $p=11$, $D \leq 3 \cdot 10^5$, $h\equiv 0 \pmod {11}$,
${\sf zmax}=\frac{1}{11^2}$, on obtient au total 4 cas~:

\footnotesize
\begin{verbatim}
PB-NORMIQUE
D=73217 h=11 E=4007500*x - 1084374999
p=11 Eta=441257*x - 119399338
1/11  1/121

D=83689 h=11 E=8962870747239371437765*x - 2592873462296714584831032
p=11 Eta=-5270913810*x - 1524825351767
1     1/1331

D=201997 h=11 E=1781*x + 800454
p=11 Eta=-64391/2*x - 28959651/2
1     1/14641

D=265681 h=44 E=2400852*x - 1237501225
p=11 Eta=2244943875203844650892*x - 1159999283951803336111535
1     1/121
\end{verbatim}

\normalsize
\subsection{\bf Comparaison avec le groupe des classes logarithmiques $\widetilde {\Cl_k}$}
\label{log}
Le $p$-groupe des classes logarithmiques a \'et\'e introduit 
par Jaulent \cite{J1} et utilis\'e pour la conjecture de Greenberg, 
en ceci qu'il donne la condition n\'ecessaire 
et suffisante suivante, sous la seule conjecture de Leopoldt~:

\begin{theorem}  (\cite[Th\'eor\`eme 7, \S\,1.4]{J2}).
Le corps totalement r\'eel $k$ v\'erifie la conjecture de Greenberg 
si et seulement si son groupe des classes logarithmiques 
$\widetilde {\Cl_k}$ capitule dans $k_\infty$.
\end{theorem}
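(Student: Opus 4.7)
The plan is to combine Jaulent's $p$-adic class field theoretic identification of $\wt{\Cl_k}$ with the Iwasawa structure theorem on $\wt{X}_\infty := \varprojlim_n \wt{\Cl_{k_n}}$, and to conclude by the standard ``capitulation trick'' adapted to the logarithmic setting.

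First I would recall that, under Leopoldt for $p$ in $k$, $\wt{\Cl_k}$ is canonically identified with ${\rm Gal}(\wt{H_k}/k_\infty)$, where $\wt{H_k}$ is the maximal abelian pro-$p$-extension of $k$ which is unramified outside $p$ and \emph{locally cyclotomic} at every $\mathfrak p \in S_k$ (i.e.\ its completion at $\mathfrak p$ lies in the cyclotomic $\Z_p$-extension of $k_\mathfrak p$); so $\wt{H_k} \supseteq k_\infty$ automatically, and the inclusions $\wt{H_k} \subset \wt{H_{k_n}}$ induce, by Artin reciprocity, the transfer morphisms $j_n : \wt{\Cl_k} \too \wt{\Cl_{k_n}}$ whose eventual vanishing on each class is the capitulation condition in $k_\infty$.

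Next I would bridge to Iwasawa theory via $\wt{X}_\infty$, a finitely generated torsion $\Lambda$-module ($\Lambda := \Z_p[[\Gamma]]$, $\Gamma := {\rm Gal}(k_\infty/k)$). Using Jaulent's comparison between logarithmic and ordinary class groups along the tower, one shows that $\wt{X}_\infty$ carries the same Iwasawa invariants $(\lambda,\mu)$ as the classical $X_\infty := \varprojlim_n \Cl_{k_n}$, because the ``logarithmic defect'' supported at primes above $p$ is pseudo-null in the cyclotomic tower of a totally real field. Hence $\lambda=\mu=0$ is equivalent to $\wt{X}_\infty$ being finite, and a Greenberg-type capitulation argument (available because, under Leopoldt, totally real $k$ admits only the cyclotomic $\Z_p$-extension) translates this finiteness into the eventual triviality of the transfer maps $j_n$.

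The hardest step, I expect, is to make the capitulation equivalence precise in both directions: showing that for totally real $k$, the finiteness of $\wt{X}_\infty$ forces \emph{each} fixed logarithmic class of $k$ to die at some finite level $k_n$. This uses the specific structure of ${\mathcal T}_k$ supplied by the corollary \ref{sefond} and the very restricted ramification pattern of locally cyclotomic extensions, and it is the technical heart of Jaulent's theory of logarithmic divisors; the converse direction (capitulation $\Rightarrow$ $\lambda=\mu=0$) is more elementary, since capitulation at the bottom of the tower directly obstructs the $\Lambda$-module growth responsible for positive $\lambda$ or $\mu$. Once both implications are secured, combining them with the Galois identification of the first step yields the theorem.
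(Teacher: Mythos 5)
The paper does not prove this theorem: it is stated with a citation to Jaulent \cite[Th\'eor\`eme~7, \S\,1.4]{J2} and used as an external input, so there is no internal proof to compare against. What you have written is an attempted reconstruction of Jaulent's own argument.

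Your sketch has the right overall architecture, but the step you yourself flag as ``the hardest'' is exactly where the proof lives, and you leave it blank. The equivalence between finiteness of $\wt{X}_\infty := \varprojlim_n \wt{\Cl_{k_n}}$ and capitulation of $\wt{\Cl_k}$ in $k_\infty$ rests on two structural facts specific to the logarithmic setting that your proposal does not supply. First, $\wt{X}_\infty$ has \emph{no nontrivial finite $\Lambda$-submodule} (a consequence of the Gross--Kuzmin conjecture, known in this totally real situation), so finiteness of $\wt{X}_\infty$ already forces $\wt{X}_\infty = 0$; this is precisely what makes the logarithmic module better behaved than the classical $X_\infty = \varprojlim_n \Cl_{k_n}$, for which $\lambda=\mu=0$ does \emph{not} imply vanishing. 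Second, the codescent $\wt{X}_\infty \too \wt{\Cl_{k_n}}$ satisfies a clean control theorem (again special to the logarithmic case), which converts ``$\wt{X}_\infty = 0$'' into ``$\wt{\Cl_{k_n}} = 0$ for all $n\gg 0$,'' hence capitulation, and, conversely, propagates the vanishing of the extension maps $\wt j_{k_n/k}$ back to the vanishing of $\wt{X}_\infty$. Your claim that ``capitulation $\Rightarrow \lambda=\mu=0$'' is the \emph{more elementary} direction is therefore misleading: capitulation at level $0$ by itself says nothing about the growth of the $\wt{\Cl_{k_n}}$; it is the logarithmic control theorem, the technical core of Jaulent's machinery, that makes this direction run. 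Finally, the appeal to Corollary~\ref{sefond} is a red herring: Jaulent's proof is internal to his theory of logarithmic divisors and does not route through $\mathcal{T}_k$, even though the present paper relates the two invariants in \S\,\ref{log}.
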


Ceci est la g\'en\'eralisation de la condition suffisante du \S\,\ref{2sub1}
disant que $\widetilde {\Cl_k} = 1$ entra\^ine la conjecture de Greenberg.
Bien que non asymptotique, ce crit\`ere est non effectif quant au 
$n_0$ \`a partir duquel l'application d'extension des classes
$\widetilde  j_{k_{n_0}/k} : \widetilde {\Cl_k} \too \widetilde {{\Cl}_{k}}{}_{n_0}^{}$
est d'image nulle et il serait int\'eressant de faire un rapprochement 
avec l'algorithme de d\'evissage des $\Cl_{k_n}$.

On d\'efinit les $p$-groupes $\widetilde {\Cl_k}$, $\widetilde {\Cl_k}^{\![p]}$, et 
$\Cl'_k := \Cl^{S_k}_k  := 
\Cl_k / \langle \cl_k({\mathfrak p}),\  {\mathfrak p} \in S_k \rangle$, 
par la suite exacte~:
$$1\too \widetilde {\Cl_k}^{\![p]} \tooo \widetilde {\Cl_k} \tooo \Cl'_k  \too 1, $$
qui permet la comparaison avec $\Cl'_k$, le groupe des $S_k$-classes
de $k$ associ\'e au corps de Hilbert $p$-d\'ecompos\'e, dont la nullit\'e 
est la premi\`ere partie de la condition suffisante du \S\,\ref{2sub1} 
pour la conjecture de Greenberg. 
La seconde ($\delta_p(\varepsilon)=0 \  \hbox{ou} \ \delta_{\mathfrak p}(\eta_p)=0$),
sous la premi\`ere, \'equivaut donc \`a la nullit\'e de 
$\widetilde {\Cl_k} = \widetilde {\Cl_k}^{\![p]}$.
Pour $k$ fix\'e et $p\gg0$, il est clair que $\Cl'_k=1$ et que 
la nullit\'e de $\widetilde {\Cl_k}$ \'equivaut \`a
$\delta_p(\varepsilon) = 0 \  \hbox{ou} \ \delta_{\mathfrak p}(\eta_p) = 0$.

\smallskip
En utilisant la fonction ${\sf bnflog(K,p)}$ de PARI, d\'ecrite dans \cite{BJ}, on peut 
calculer la structure du $p$-groupe des classes logarithmiques. Pour le cas des corps 
quadratiques r\'eels (mais sans l'hypoth\`ese de $p$-d\'ecomposition que 
l'on peut rajouter via la condition $(\frac{D}{p}) = 1$ sur $D$) 
on obtient le programme suivant (ne retenant que les cas o\`u $\widetilde {\Cl_k} \ne 1$)~:

\footnotesize
\begin{verbatim}
============================================================================
{p=3;for(D=10^4,10^4+10^3,e=valuation(D,2);M=D/2^e;if(core(M)!=M,next);
if((e==1||e>3)||(e==0 & Mod(M,4)!=1)||(e==2 & Mod(M,4)==1),next);
P=x^2-D;K=bnfinit(P,1);H=bnflog(K,p);if(component(H,1)!=[],print(D," ",H)))}
============================================================================
\end{verbatim}
\normalsize

Donnons les courts extraits suivants de cas non triviaux pour $p=3$ et $5$
(selon les notations $[[\widetilde {\Cl_k}], \ [\widetilde {\Cl_k}^{\![p]}], \  [\Cl'_k]]$ de \cite[\S\,4]{BJ})~:

\footnotesize
\begin{verbatim}
p=3
 D     structures         D     structures         D     structures
10040 [[3], [],  [3]]    10585 [[3], [3], []]     10849 [[27], [27], []]
10060 [[3], [3], []]     10636 [[3], [3], []]     10865 [[3],  [],   [3]]
10077 [[3], [],  [3]]    10641 [[3], [],  [3]]    10889 [[3],  [],   [3]]
10153 [[3], [3], []]     10661 [[3], [],  [3]]    10904 [[3],  [],   [3]]
10172 [[3], [],  [3]]    10664 [[3], [],  [3]]    10929 [[3],  [],   [3]]
10213 [[3], [3], []]     10712 [[3], [],  [3]]    10941 [[3],  [],   [3]]
10301 [[3], [],  [3]]    10721 [[9], [],  [9]]    10949 [[3],  [],   [3]]
10353 [[3], [],  [3]]    10733 [[3], [],  [3]]    10972 [[3],  [3],  []]
10357 [[3], [3], []]     10812 [[3], [],  [3]]    10997 [[3],  [],   [3]]
10457 [[3], [],  [3]]    10844 [[3], [],  [3]]    

p=5
 D     structures           D     structures          D      structures
10284 [[25], [25], []]     10408 [[5], [],  [5]]     10649 [[5], [5], []]
10301 [[5],  [5],  []]     10561 [[5], [5], []]      10821 [[5], [5], []]
10396 [[5],  [5],  []]     10613 [[5], [],  [5]]     10885 [[5], [],  [5]]
\end{verbatim}
\normalsize

Pour $p=5$, il y a rapidement rar\'efaction des cas non triviaux, le cas de
$D= 10284$ \'etant assez exceptionnel.

\smallskip
Quant \`a $p=29$, on trouve par exemple
$D = 4 \cdot 683, 4 \cdot 890, 4 \cdot 1271, 4349$, 
$4 \cdot 7858$, pour lesquels $\widetilde {\Cl_k} = \widetilde {\Cl_k}^{\![p]} \simeq \Z/ 29 \Z$.

\smallskip
On constate plus g\'en\'eralement l'identit\'e des r\'esultats num\'eriques
donn\'es par le programme pr\'ec\'edent avec ceux donn\'es dans 
l'importante table (issue de \cite[\S\,5.2]{Gra3}, cas $p$-d\'ecompos\'e)~: 
\url{https://www.dropbox.com/s/tcqfp41plzl3u60/R}

\smallskip\noindent
o\`u l'indication de au moins l'une des mentions 
\go PB-CLASSES\gf (i.e., $\Cl^{S_k}_k \ne 1$) ou \go PB-NORMIQUE\gf
(i.e., $\delta_p(\varepsilon)\geq 1 \  \& \ \delta_{\mathfrak p}(\eta_p)\geq 1$),
caract\'erise un groupe de classes logarithmiques non trivial, auquel cas
on ne sait pas conclure.

\subsubsection{Remarques sur le groupe des classes logarithmiques}
(i) On notera que pour un corps $k$ totalement r\'eel, d\`es que le corps 
de classes de Hilbert $H_k$ est lin\'eairement disjoint de $K_\infty$,
la {\it $p$-rationalit\'e} de $k$ (i.e., ${\mathcal T}_k=1$) implique la trivialit\'e 
du groupe des classes logarithmiques (i.e., $\widetilde {\Cl_k}=1$), mais que 
la r\'eciproque est largement fausse.

\smallskip
De fa\c con pr\'ecise, sous nos hypoth\`eses ($k$ totalement r\'eel, 
$p$-d\'ecompos\'e, $p>2$), on a (cf. suite exacte \eqref{se1})~:
$$\order {\mathcal T}_k = 
\order \Cl_k \cdot \order {\mathcal R}_k,$$ 
\noindent
tandis que~:
$$\order \widetilde {\Cl_k} =\order \Cl'_k \cdot \order \widetilde {\Cl_k}^{\![p]} ;$$
or $\widetilde {\Cl_k}^{\![p]}$ est isomorphe \`a un quotient de ${\mathcal R}_k$
induit par les $S_k$-unit\'es \cite[Sch\'ema \S\,2.3]{J2}, 
ce qui montre que ${\mathcal T}_k=1$ \'equivaut ici \`a  $\Cl_k=1$ et 
$\delta_p(\varepsilon)=0$ (i.e., ${\mathcal R}_k=1$),
tandis que $\widetilde {\Cl_k}=1$ \'equivaut \`a $\Cl'_k=1 \ \& \ 
(\delta_p(\varepsilon)=0 \  \hbox{ou} \ \delta_{\mathfrak p}(\eta_p)=0)$.

En d\'epit de la notation, $\widetilde {\Cl_k}^{\![p]}$ doit \^etre consid\'er\'e 
comme un \go r\'egulateur logarithmique\gf\!\!, donc
l'invariant essentiel puisque $\Cl'_k=1$ pour $p \gg 0$.

\smallskip
(ii) Donnons des exemples de corps $k=\Q(\sqrt m)$ ($m$ sans facteur carr\'e) tels que
${\mathcal T}_k \ne 1\ \& \ \widetilde {\Cl_k}=1$~:

\smallskip
Corps $k$ tels que ${\mathcal T}_k \ne 1\ \& \ \widetilde {\Cl_k}=1$
avec $\delta_p(\varepsilon) \geq 1 \  \hbox{et} \  \delta_{\mathfrak p}(\eta_p)=0$~:

\smallskip\noindent
\footnotesize
$m=43, 58, 79, 82, 85, 109, 151, 181, 199, 202, 
247, 271, 310, 322, 331, 337, 391$, $406, 457, \ldots$

\smallskip\normalsize
Corps $k$ pour lesquels ${\mathcal T}_k \ne 1\ \& \ \widetilde {\Cl_k}=1$
avec $\delta_p(\varepsilon)=0 \  \hbox{et} \  \delta_{\mathfrak p}(\eta_p) \geq 1$~:

\smallskip \noindent 
\footnotesize
$m=142, 223, 235, 469, \ldots$

\smallskip \normalsize
Corps $k$ pour lesquels  ${\mathcal T}_k \ne 1\ \& \ \widetilde {\Cl_k}=1$
avec $\delta_p(\varepsilon)= \delta_{\mathfrak p}(\eta_p)=0$~:

\smallskip \noindent 
\footnotesize
$m=229, 346, 427, \ldots$

\subsection{\bf Crit\`ere de $p$-rationalit\'e  (i.e., ${\mathcal T}_k=1$)}\label{prat}

\normalsize
Redonnons le programme \cite[Programme I]{Gra5}, qui d\'etermine
la $p$-rationalit\'e d'un corps de nombres arbitraire d\'efini par un
polyn\^ome unitaire irr\'eductible $P \in \Z[x]$, et traitons le cas des corps
quadratiques r\'eels~; pour chaque 
discriminant $D$ on donne le $3$-rang de ${\mathcal T}_k$, 
le test de $3$-rationalit\'e et la structure du $3$-groupe de classes 
de rayon ${\sf 3^{nt}}$, ${\sf nt} \geq 2$ (${\sf nt} =2$ est suffisant pour le test de 
$p$-rationalit\'e et ${\sf nt}$ assez grand donne la structure de ${\mathcal T}_k$)~:

\footnotesize
\begin{verbatim}
==============================================================================
{p=3;bD=2;BD=10^5;nt=9;for(D=bD,BD,e=valuation(D,2);M=D/2^e;
if(core(M)!=M,next);if((e==1||e>3)||(e==0 & Mod(M,4)!=1)||(e==2 & Mod(M,4)==1),
next);P=x^2-D;K=bnfinit(P,1);Kpn=bnrinit(K,p^nt);
Hpn=component(component(Kpn,5),2);L=List;v=component(matsize(Hpn),2);
R=0;for(k=1,v-1,c=component(Hpn,v-k+1);if(Mod(c,p)==0,R=R+1;
listinsert(L,p^valuation(c,p),1)));
if(R>0,print("D=",D," rk(T)=",R," K is not ",p,"-rational ",L));
if(R==0,print("D=",D," rk(T)=",R," K is ",p,"-rational ",L)))}
==============================================================================
\end{verbatim}
\normalsize

On obtient, en se limitant aux corps non $3$-rationnels $3$-d\'ecompos\'es
(rajouter au programme la condition {\sf kronecker(D,p)!=1}) avec
${\rm rg}_3({\mathcal T}_k)>1$, les quelques exemples suivants (o\`u le rang est $2$)~:
\footnotesize
\begin{verbatim}
D=2917  List([9, 3])     D=10636  List([9, 3])      D=14668  List([3, 3])
D=6856  List([3, 3])     D=11293  List([9, 3])      D=15517  List([3, 3])
D=7465  List([9, 9])     D=13273  List([3, 3])      D=15529  List([27, 3])
D=8713  List([9, 3])     D=13564  List([27, 3])     D=15733  List([3, 3])
D=8920  List([3, 3])     D=13861  List([2187, 3])   D=17116  List([9, 3])
D=9052  List([3, 3])     D=14197  List([9, 3])      D=18541  List([3, 3])   
\end{verbatim}
\normalsize
                                        
Par rapport aux corps du point (ii) ci-dessus, on obtient les cas
o\`u l'on a simultan\'ement ${\mathcal T}_k \ne 1\ \& \  \widetilde {\Cl_k}\ne 1$ 
($D=4 \!\cdot\!  67, \ 4 \!\cdot\! 103, \  4 \!\cdot\! 106,\ldots$).

\section{Statistiques sur les symboles d'Artin des
${\rm N}_{K/k}({\mathfrak A})$ -- Exemples}

Nous revenons au principe d'analyse, d\'ecrit Section \ref{sub2}, pour tester les
Hypoth\`eses (H), fin du \S\,\ref{sub2}, sur les normes absolues d'id\'eaux 
{\it \'etrangers \`a} $p$ dans la tour cyclotomique d'un corps quadratique r\'eel 
$p$-d\'ecompos\'e.
On va montrer que l'on peut toujours supposer les id\'eaux ${\mathfrak A}$
premiers pour effectuer les statistiques.

\subsection{\bf Repr\'esentation des classes par des id\'eaux premiers}\label{2sub2}
On suppose d\'esormais (pour les calculs) que $p=3$.
On d\'esire \'etablir des statistiques sur l'influence num\'erique des 
${\rm N}_{K/k}({\mathfrak A})$, ${\mathfrak A} \in{\mathcal I}_i^n$,
de l'algorithme sur le facteur classes et sur le facteur normique de \eqref{cr},
pour $K=k_n$, sachant que l'algorithme d\'etermine des ${\mathfrak A}$
successifs par le biais de \go l'\'equation d'\'evolution 
$(y) = {\mathfrak B}\!\cdot \! {\mathfrak A}^{1-\sigma}$\gf\!\!,
provenant de \go $x={\rm N}_{K/k}(y) \ \&\ (x)={\rm N}_{K/k}({\mathfrak B})$\gf\!\!,
qui sera \'etudi\'ee Section \ref{sec3}.

\smallskip
Toute classe d'id\'eaux de $K$ peut se repr\'e\-senter par un id\'eal premier
${\mathfrak L}$ de $K$, totalement d\'ecompos\'e dans $K/\Q$ 
(th\'eor\`eme de Chebotarev dans $H_K/\Q$).
Comme une relation de la forme ${\mathfrak A} = {\mathfrak L} \cdot (\alpha)$
dans $K$ implique, dans $k$~:
$${\rm N}_{K/k}({\mathfrak A}) = {\rm N}_{K/k}({\mathfrak L}) \cdot( {\rm N}_{K/k}(\alpha)) ,$$
le fait de supposer les id\'eaux ${\mathfrak A}$ premiers est sans cons\'equences
sur l'\'etude statistique des deux facteurs \eqref{cr}~; en effet,
le premier facteur est relatif aux {\it classes des ${\rm N}_{K/k}({\mathfrak A})$} 
aussi repr\'esent\'ees par des ${\rm N}_{K/k}({\mathfrak L})$,
et le second est relatif aux {\it indices normiques} 
$(\Lambda_i^n : \Lambda_i^n \cap {\rm N}_{K/k}(K^\times))$,
o\`u $\Lambda_i^n := \{x \in k^\times,\  (x) \in {\rm N}_{K/k}({\mathcal I}_i^n) \}$
(cf. \eqref{lambda}), qui ne d\'ependent pas du choix des $x \in \Lambda_i^n$, 
modulo $E_k \cdot {\rm N}_{K/k}(K^\times)$.

\smallskip
On consid\`ere donc un grand nombre de premiers $\ell$ totalement d\'ecom\-pos\'es
dans $K/\Q$ (i.e., $\ell \equiv \pm 1 \pmod{3^{n+1}}$ et 
$(\ell) =: {\mathfrak l}\cdot {\mathfrak l}'$ dans $k$). On suppose implicitement 
que ${\mathfrak L}$ et ${\mathfrak l} := {\rm N}_{K/k}({\mathfrak L})$ sont les 
variables al\'eatoires qui \go conduisent l'algorithme\gf \`a chaque \'etape. 

\smallskip
On peut donc supposer ${\rm N}_{K/k}({\mathcal I}_i^n)$ engendr\'e
par des ${\mathfrak l}_j = {\rm N}_{K/k}({\mathfrak L}_j)$ du type pr\'ec\'edent et 
$\Lambda_i^n = \{x \in k^\times, \, (x) \in \langle {\mathfrak l}_j  \rangle_j\}$~; 
le facteur classes d\'epend du sous-groupe 
$\langle \cl_k({\mathfrak l}_j) \rangle_j$ de $\Cl_k$ 
dont la croissance algorithmique, en fonction du nombre de pas, 
est cens\'ee atteindre $\Cl_k$ puisque ${\mathcal I}_{i+1}^n \supset {\mathcal I}_i^n$.

\smallskip
Si $x \in \Lambda_i^n$, on a donc $(x) = \prod_j {\mathfrak l}_j^{e_j}$, $e_j \in \Z$ 
(ce qui repr\'esente une relation entre les classes des ${\mathfrak l}_j$), et le facteur 
normique d\'epend alors des $\delta_3(x)$ et d\'ecro\^it dans la progression de 
l'algorithme puisque $\Lambda_{i+1}^n \supset \Lambda_i^n$. 

\smallskip
Les relations \'etant nombreuses et inconnues pour faire des statistiques, 
on s'int\'eressera dans les sections suivantes
au cas particu\-lier des $x$ qui sont des ${\mathfrak l}$-unit\'es~;
si $r_\ell$ est l'ordre de la classe de ${\mathfrak l}$, on posera
${\mathfrak l}^{r_\ell} = (\eta_\ell)$, o\`u $\eta_\ell$ est une $\ell$-unit\'e 
(d\'efinie modulo $E_k$), puis on calculera $\delta_3(\eta_\ell)$.
Comme ${\rm N}_{k/\Q}(\eta_\ell) = \pm \ell^{r_\ell} \equiv \pm 1 \pmod{3^{n+1+r_\ell}}$, 
on pourra n\'egliger la conjugaison dans $k/\Q$ et travailler avec un unique 
id\'eal premier ${\mathfrak l} \mid \ell$ et une unique $\eta_\ell$.

\smallskip
Ensuite, si l'on constate que les entiers $\delta_3(\eta_\ell)$ 
ont (par rapport \`a $\delta_3(E_k)$) les r\'epartitions attendues,
alors une propri\'et\'e analogue en r\'esultera, a fortiori, pour les $\delta_3(x)$ 
(cens\'es valoir $0$ avec une probabilit\'e non nulle) car
les \go relations\gf particuli\`eres ${\mathfrak l}^{r_\ell} = (\eta_\ell)$ 
ne donnent qu'une partie des $x \in \Lambda_i^n$ susceptibles d'\^etre 
normes dans $K/k$.

\smallskip
On peut cependant donner un aper\c cu plus g\'en\'eral de la question et faire des 
statistiques sur l'ensemble des relations en consid\'erant un petit nombre de $\ell_j$, 
donn\'es a priori, en testant si un produit $\prod_j \ell_j^{e_j}$, $e_j \geq 0$,
est la norme d'un entier $x$ sans facteur rationnel de $k=\Q(\sqrt m)$, 
auquel cas on a la relation $\prod_j {\mathfrak l}_j^{e_j} = (x)$
entre les ${\mathfrak l}_j$ et on d\'etermine la
r\'epartition des $\delta_3(x)$ obtenus de cette fa\c con. 

\smallskip
Pour simplifier, on a consid\'er\'e un cas o\`u $h=3$ et o\`u $\delta_3(\varepsilon)=4$~;
la variable {\sf  Npx} compte le nombres de produits non principaux et 
{\sf  Nn} le nombre total de produits test\'es.
Les $e_j$ sont pris au hasard dans $\{0, 1, 2\}$, un grand nombre de fois~; 
il n'est pas n\'ecessaire de conna\^itre les ordres  des classes des ${\mathfrak l}_j$
car on obtient des statistiques tr\`es stables, quel que soit l'ensemble de
$\ell_j$ retenu. De fait le r\'esultat est relativement naturel dans la mesure
o\`u caract\'eriser $\delta_3(x)$ revient \`a faire des statistiques dans des groupes 
de classes g\'en\'eralis\'ees (i.e., modulo un rayon puissance de $p$)~; or les
th\'eor\`emes de densit\'e conduisent \`a des r\'epartitions canoniques.

\smallskip
On a consid\'er\'e des $\ell_j \equiv 1 \pmod 9$, mais les r\'esultats sont identiques pour
des $\ell_j \equiv 1 \pmod {3^{n_0}}$, $n_0$ arbitraire, sauf que les produits
$\prod_j \ell_j^{e_j}$ deviennent tr\`es grands ainsi que le temps de calcul~:

\footnotesize
\begin{verbatim}
==============================================================================
{p=3;m=7249;Q=x^2-m;K=bnfinit(Q,1);B=10^3;Mp=p^2;listL=List;NlistL=0;L=1;
while(L<B,L=L+2*Mp;if(isprime(L)==1 & kronecker(m,L)==1,NlistL=NlistL+1;
listinsert(listL,L,1)));C0=0;C1=0;C2=0;Nn=0;Npx=0;for(n=1,10^3,PL=1;
for(k=1,NlistL,PL=PL*component(listL,k)^random(3));N=bnfisintnorm(K,PL);
d=matsize(N);d1=component(d,1);d2=component(d,2);if(d2==0,Npx=Npx+1);
if(d2!=0,for(j=1,d2,aa=component(N,j);if(aa!=1,a1=component(aa,1);
a2=component(aa,2);if(gcd(a1,a2)==1,a=Mod(aa,Q);Nn=Nn+1;A=(a^2-1)/3;
v=valuation(A,3);if(v==0,C0=C0+1);if(v==1,C1=C1+1);if(v>=2,C2=C2+1))))));
print(Nn," ",Npx," ",Npx/Nn+0.0);print(" ");
print(C0/Nn+0.0," ",C1/Nn+0.0," ",C2/Nn+0.0);print(" ")}
==============================================================================
\end{verbatim}
\normalsize
Pour l'exemple de $m= 7249$, on obtient les donn\'ees suivantes (\S\,\ref{2sub1})~:
\footnotesize
\begin{verbatim}
m=7249, h=3, E=170524677024744665220*x+14518651659981320194199
p=3, Eta=131480821*x-11194416394
1   1/81
\end{verbatim}
\normalsize
pour lesquelles $\delta_{\mathfrak p}(\eta_3)=0$ et $\delta_3(\varepsilon)=4$~; on a test\'e
la liste suivante de nombres premiers $\ell \equiv 1 \pmod 9$, d\'ecompos\'es dans $k$~:

\smallskip
\centerline{${\sf listL}=[937,\  883,\  811,\  631,\  487,\  181,\  163,\  37]$.}

\smallskip
On obtient (o\`u ${\sf CJ}=\order \{x, \, \delta_3(x)= j\}$, $j=0,1$,
${\sf C2} = \order \{x, \, \delta_3(x)\geq 2\}$)~:
\footnotesize
\begin{verbatim}
Nn=11018  Npx=448  proportion=0.04066073
C0/Nn=0.66563804  C1/Nn=0.29606099  C2/Nn=0.03830096
\end{verbatim}
\normalsize
Plusieurs passages du programme donnent~:
${\sf Nn}=10720$, {\sf Npx}$=454$, et les proportions attendues de 
$x$ tels que $\delta_3(x)=0$~:

\centerline{$0.666001,\,  0.666652,\,  0.668400,\,  0.667622,\,  
0.667252,\,  0.666424,\,  0.666396$.}

\smallskip
Nous revenons \`a l'\'etude des facteurs classes et normique \`a partir des 
$\ell$-unit\'es du corps $k$ pour un grand nombre de $\ell$.

\subsection{\bf Facteurs classes et normique de $\ell$-unit\'es
pour $m=72262$}\label{2sub3}

On a $\varepsilon = 632566365854478210\cdot \sqrt m + 170043910956651732101$
et la $3$-unit\'e fondamentale $\eta_3 = -1037963 \cdot \sqrt m + 279020981$.

\smallskip
On a $\delta_{\mathfrak p}(\eta_3)=0$, $\delta_3(\varepsilon)=4$, 
$\Cl_k \simeq \Z/9\, \Z$ est engendr\'e par ${\mathfrak p} \mid 3$
(on a aussi $\Cl_{k_1} \simeq \Z/27\, \Z$ et $\Cl_{k_2} \simeq \Z/81\,\Z$).
La condition suffisante pour avoir $\lambda=\mu=0$ est donc satis\-faite et 
il est int\'eressant de voir si cela se traduit sur 
ces \'etudes de normes. 

\smallskip
Le module $3^{8+1}$, qui figure un calcul dans 
$k_8/k$ pour des $\ell$ totalement d\'ecompos\'es, peut \^etre modifi\'e 
\`a volont\'e car on observe que les statistiques n'en d\'ependent pas.

\subsubsection{Programme}
Le programme est le suivant o\`u $r$ est une puissance 
de $p=3$ divisant le nombre de classes de $k$ (ici $r \in\{1, 3, 9\}$). 
Il calcule, pour chaque premier $\ell$ 
totalement d\'ecompos\'e dans $K/\Q$ (class\'es par $\ell \equiv 1 \pmod {3^{n+1}}$, 
puis $-1\pmod {3^{n+1}}$), la ${\mathfrak l}$-unit\'e fondamentale $\eta_\ell$, 
o\`u ${\mathfrak l}$ est un id\'eal premier de $k$ au-dessus de $\ell$, dont la 
classe est d'ordre $r$ donn\'e et il calcule $\delta_3(\eta_\ell)$~; 
on aura donc $(\eta_\ell)={\mathfrak l}^{r}$ (cas particulier de
relations de principalit\'e). Ici  $n=8$.

\smallskip
La r\'epartition des ordres des classe des ${\mathfrak l}$ constitue la premi\`ere 
partie du programme et seulement en seconde partie, on utilise la 
valeur de $r$ fix\'ee au d\'ebut.

\smallskip
Le nombre ${\sf NLr}$ repr\'esente le nombre de premiers 
$\ell \leq {\sf BL} = 2\cdot 10^{12}$ 
totalement d\'ecompos\'es dans $K/k$ tels que la classe de 
${\mathfrak l}= {\rm N}_{K/k}({\mathfrak L})$ soit d'ordre~$r$.
Les proportions sont compar\'ees aux probabilit\'es naturelles
$\frac{1}{9}$, $\frac{2}{9}$, $\frac{6}{9}$. 

\smallskip
On a ${\sf NL=NL1+NL3+NL9}$ (nombre de $\ell$ consid\'er\'es).

\smallskip
Pour chaque $r$, on d\'esigne par ${\sf C0, C1, C2, C3, C4, C5}$ les 
nombres de $\ell \leq {\sf BL}$ tels que $(\eta_\ell)={\mathfrak l}^{r}$ et
$\delta_3(\eta_\ell) = 0, 1, 2, 3, 4, \geq 5$ respectivement~:

\footnotesize
\begin{verbatim}
==============================================================================
{r=3;p=3;m=72262;n=8;BL=2*10^12;M=p^(n+1);Q=x^2-m;K=bnfinit(Q,1);
C0=0;C1=0;C2=0;C3=0;C4=0;C5=0;CL1=0;CL3=0;CL9=0;NL=0;NLr=0;
for(t=-1,0,L=2*t+1;while(L<BL,L=L+2*M;if(isprime(L)==1 & kronecker(m,L)==1,
NL=NL+1;Su=bnfsunit(K,idealprimedec(K,L));F=component(component(Su,1),1);
Eta=Mod(F,Q);No=norm(Eta);vcl=valuation(No,L);if(vcl==1,CL1=CL1+1);
if(vcl==3,CL3=CL3+1);if(vcl==9,CL9=CL9+1);if(vcl==r,NLr=NLr+1;
A=F;B=(Mod(A,Q)^2-1)/3;v=valuation(B,3);if(v==0,C0=C0+1);
if(v==1,C1=C1+1);if(v==2,C2=C2+1);if(v==3,C3=C3+1);if(v==4,C4=C4+1);
if(v>=5,C5=C5+1)))));print("p=",p," m=",m," n=",n," BL=",BL);
print("NLr=",NLr," C0=",C0," C1=",C1," C2=",C2," C3=",C3," C4=",C4," C5=",C5);
print(C0/NLr+0.0," ",C1/NLr+0.0," ",C2/NLr+0.0," ",C3/NLr+0.0," ",
C4/NLr+0.0," ",C5/NLr+0.0);S=0.0;for(j=1,8,S=S+(p-1.0)/p^(5+j));
print(2./3," ",2./9," ",2./27," ",2./81," ",2./243," ",S);print(" ");
print("r=",r);print("NL=",NL," CL1=",CL1," CL3=",CL3," CL9=",CL9);
print(CL1/NL+0.0," ",CL3/NL+0.0," ",CL9/NL+0.0);print(1./9," ",2./9," ",6./9)}
==============================================================================
\end{verbatim}
\normalsize

\subsubsection{R\'epartition des ordres des classes pour $m=72262$}
On obtient ${\sf NL}=5584183$, 

\noindent
${\sf NL1}=620512, {\sf NL3}=1240284, {\sf NL9}=3723387$,
et le tableau des proportions~:

\footnotesize
$$\begin{array}{cccc}
& \hbox{proportions} &   & \hbox{probabilit\'es} \vspace{0.1cm}\\ 
& {\sf NL1/NL}= 0.1111195675
 &\hspace{1.0cm}   \frac{2}{3} & = 0.1111111111 \\
& {\sf NL3/NL}= 0.2221066179
&\hspace{1.0cm} \frac{2}{3^2} & = 0.2222222222 \\
& {\sf NL9/NL}= 0.6667738145
&\hspace{1.0cm} \frac{2}{3^3} & = 0.6666666666 \\
\end{array} $$
\normalsize

Il est clair que l'heuristique de r\'epartition uniforme est v\'erifi\'ee.
Ceci traduit le comportement du facteur classes
destin\'e \`a devenir rapidement trivial sous r\'eserve du caract\`ere
al\'eatoire des ${\mathfrak L}$ obtenus par l'algorithme (nous y
reviendrons au \S\,\ref{3sub2}).

\subsubsection{R\'epartition des $\delta_3(\eta_\ell)$ pour $m=72262$}
Il reste \`a voir la r\'epar\-tition des $\delta_3(\eta_\ell)$ 
selon la valeur de $r$~:

\medskip
 (i) D\'enombrement des $\cl_k({\mathfrak l})$ principales et calcul 
des $\delta_3(\eta_\ell)$.

\smallskip
On obtient ${\sf NL1} = 620512\ $ avec $\ {\sf C0} = 412997, 
{\sf C1} = 137911, {\sf C2} = 46166$, 

\noindent
${\sf C3} = 15833, {\sf C4} = 5090, {\sf C5} = 2515$,
et le tableau~:

\footnotesize
$$\begin{array}{cccc}
& \hbox{proportions} &   & \hbox{probabilit\'es} \vspace{0.1cm}\\ 
& {\sf C0/NL1}= 0.6655745577
 &\hspace{1.0cm}   \frac{2}{3} & = 0.6666666666 \\
& {\sf C1/NL1}= 0.2222535583
&\hspace{1.0cm} \frac{2}{3^2} & = 0.2222222222 \\
& {\sf C2/NL1}= 0.0743998504
&\hspace{1.0cm} \frac{2}{3^3} & = 0.0740740740 \\
& {\sf C3/NL1}= 0.0255160254
&\hspace{1.0cm} \frac{2}{3^4} & = 0.0246913580 \\
& {\sf C4/NL1}= 0.0082029034
&\hspace{1.0cm}  \frac{2}{3^5} & = 0.0082304526 \\
& {\sf C5/NL1}= 0.0040531045
&\ \   \sum_{j \geq 6}\frac{2}{3^j} & = 0.0041152264
\end{array} $$
\normalsize

 (ii) D\'enombrement des $\cl_k({\mathfrak l})$ d'ordre $r=3$ et calcul 
des $\delta_3(\eta_\ell)$.

\smallskip
On obtient
${\sf NL3 = C0} =1240284$, ${\sf C1 = C2 = C3 = C4 = C5} = 0$.

\medskip
 (iii) D\'enombrement des $\cl_k({\mathfrak l})$ d'ordre $r=9$ et calcul 
des $\delta_3(\eta_\ell)$.

\smallskip
On obtient aussi
${\sf NL9 = C0} = 3723387$, ${\sf C1 = C2 = C3 = C4 = C5} = 0$.

\smallskip
Ceci s'explique par le fait que le $3$-groupe des classes du \go corps miroir\gf
$k^* := \Q(\sqrt{-3 \cdot 72262})$ est d'ordre $3$ et que, d'apr\`es le th\'eor\`eme
de Scholz, puisque l'unit\'e $\varepsilon$ de $k$ est $3$-primaire, il ne peut y avoir 
d'autres \go pseudo-unit\'es\gf $3$-primaires dans $k$, c'est-\`a-dire d'\'el\'ements 
$a \in k^\times \setminus k^{\times 3}$, non unit\'es, tels que $(a) = {\mathfrak a}^3$ 
et $a \equiv \pm 1 \pmod 9$ dans $k$ (i.e., $\delta_3(a) \geq 1$).

\smallskip
Ainsi, dans les cas $r=3$ et $r=9$, les $\delta_3(\eta_\ell)$ sont 
n\'ecessairement nuls. Mais ceci d\'epend de l'arithm\'etique de $k$ 
et ne concerne que des relations tr\`es particuli\`eres. 
De toutes fa\c cons cela va dans le bon sens pour le facteur normique
car un tel $\eta_\ell$ donne un symbole normique d'ordre maximum.

\subsection{\bf Facteurs classes et normique de $\ell$-unit\'es
pour $m = 10942$}\label{2sub4} 

On a $\varepsilon = 110617476121372232880\cdot \sqrt m 
+ 11571032155720815417599$
et une $3$-unit\'e fondamentale $\eta_3= -890711 \cdot \sqrt m + 93171947$.

\smallskip
On a $\delta_{\mathfrak p}(\eta_3)=1$, $\delta_3(\varepsilon)=6$, $h=3$,
$\Cl_k \simeq \Z/3\,\Z$ et $\Cl_{k_1} \simeq \Z/3\,\Z \times \Z/3\,\Z$.

\smallskip
Il s'agit d'un cas o\`u la condition suffisante pour avoir 
$\lambda=\mu=0$ n'est pas satisfaite en raison de l'aspect normique, 
mais le groupe des classes de $k$ est engendr\'e par un id\'eal premier
au-dessus de $3$.

\smallskip
Le programme est similaire au pr\'ec\'edent avec $r \in\{1, 3 \}$, 
les nombres ${\sf NL1}$, ${\sf NL3}$ repr\'esentent
le nombre de premiers $\ell \leq {\sf BL} = 2\cdot 10^{12}$, 
totalement d\'ecompos\'es dans $K/k$, tels que la classe de 
${\mathfrak l}={\rm N}_{K/k}({\mathfrak L})$ soit d'ordre $r=1, 3$
respectivement.
Les proportions correspondantes sont compar\'ees aux probabilit\'es 
naturelles qui sont ici $\frac{1}{3}$, $\frac{2}{3}$.
Les variables ${\sf C0, C1, C2, C3, C4, C5}$ sont 
les nombres de premiers $\ell$ tels que 
$\delta_3(\eta_\ell) = 0, 1, 2, 3, 4, \geq 5$ respectivement et on a
${\sf NL= NL1+NL3}$.

\subsubsection{R\'epartition des ordres des classe pour $m=10942$}
Les donn\'ees sur la r\'epartition des classe sont les m\^emes pour les valeurs 
de $r \in \{1, 3\}$, \`a savoir
${\sf NL}= 5587470, {\sf NL1}= 1862666, {\sf NL3}= 3724804$,
et pour les proportions~:

\footnotesize
$$\begin{array}{cccc}
& \hbox{proportions} &   & \hbox{probabilit\'es} \vspace{0.1cm}\\ 
& {\sf NL1/NL}= 0.333364832
 &\hspace{1.0cm}   \frac{2}{3} &  =0.3333333333  \\
& {\sf NL3/NL}= 0.666635167
&\hspace{1.0cm} \frac{2}{3^2} & = 0.6666666666  \\
\end{array} $$
\normalsize

L'heuristique de r\'epartition uniforme est encore v\'erifi\'ee pour les classes.

\subsubsection{R\'epartition des $\delta_3(\eta_\ell)$ pour $m=10942$}
Il y a deux cas \`a exami\-ner~:

\medskip
 (i) D\'enombrement des $\cl_k({\mathfrak l})$ principales et calcul 
des $\delta_3(\eta_\ell)$.

\smallskip
On obtient
${\sf NL1} = 1862666$, ${\sf C0} = 1241224$, ${\sf C1} = 414270$, 
${\sf C2} = 138044$, ${\sf C3} = 45829$, ${\sf C4} = 15475$, ${\sf C5} = 7824$
et le tableau~:

\footnotesize
$$\begin{array}{cccc}
& \hbox{proportions} &   & \hbox{probabilit\'es} \vspace{0.1cm}\\ 
& {\sf C0/NL1}= 0.6663696014
 &\hspace{1.0cm}   \frac{2}{3} &  = 0.6666666666 \\
& {\sf C1/NL1}= 0.2224070230
&\hspace{1.0cm} \frac{2}{3^2} & = 0.2222222222 \\
& {\sf C2/NL1}= 0.0741109785
&\hspace{1.0cm} \frac{2}{3^3} & = 0.0740740740 \\
&  {\sf C3/NL1}= 0.0246039816
&\hspace{1.0cm} \frac{2}{3^4} & = 0.0246913580 \\
&  {\sf C4/NL1}= 0.0083079843
&\hspace{1.0cm}  \frac{2}{3^5} & = 0.0082304526 \\
&  {\sf C5/NL1}= 0.0042004309
&  \sum_{j \geq 6}\frac{2}{3^j} & = 0.0041152264
\end{array} $$
\normalsize

 (ii) D\'enombrement des $\cl_k({\mathfrak l})$ d'ordre $3$ et calcul des 
$\delta_3(\eta_\ell)$.

\smallskip
On obtient
${\sf NL3} = 3724804$, ${\sf C0} = 0$, ${\sf C1} = 2484070$, 
${\sf C2 }= 827554 $, 

\noindent
${\sf C3} = 275264$, ${\sf C4} = 91971 $, ${\sf C5} = 45945$
et le tableau~:

\footnotesize
$$\begin{array}{cccc}
& \hbox{proportions} &   & \hbox{probabilit\'es} \vspace{0.1cm}\\ 
& {\sf C0/NL3}= 0.0000000000
 &\hspace{1.0cm}   0 &  =0.0000000000 \\
 & {\sf C1/NL3}= 0.6668995200
&\hspace{1.0cm} \frac{2}{3} & = 0.6666666666 \\
& {\sf C2/NL3}= 0.2221738378
&\hspace{1.0cm} \frac{2}{3^2} & = 0.2222222222 \\
& {\sf C3/NL3}= 0.0739002642
&\hspace{1.0cm} \frac{2}{3^3} & = 0.0740740740\\
& {\sf C4/NL3}= 0.0246915005
&\hspace{1.0cm} \frac{2}{3^4} & = 0.0246913580 \\
& {\sf C5/NL3}= 0.0123348772
&\hspace{1.0cm}  \frac{2}{3^5} & = 0.0123450517 \\
\end{array} $$
\normalsize

\smallskip
Les densit\'es et probabilit\'es doivent \^etre d\'ecal\'ees en raison de 
l'impos\-sibilit\'e, lorsque la classe de ${\mathfrak l}$ est d'ordre $3$,
du cas $\delta_3(\eta_\ell)=0$ qui s'explique comme suit 
(noter que, ici encore, la relation
${\mathfrak l}^3=(\eta_\ell)$ n'est qu'un cas tr\`es particulier de relation,
et que la plupart des relations $\prod_j {\mathfrak l}_j^{e_j} = (x)$ peuvent 
conduire \`a $\delta_3(x)=0$)~:

\smallskip
Pour le corps miroir $k^* = \Q(\sqrt{-3\cdot 10942})$ le nombre de classes 
est $216 = 8 \cdot 27$ et le $3$-groupe de classes est isomorphe \`a 
$\Z/9\,\Z \times \Z/3\,\Z$, ce qui explique que dans $k$, on doit avoir 
deux pseudo-unit\'es ind\'epen\-dantes $3$-primaires~; 
l'une est toujours donn\'ee par l'unit\'e fondamentale puisque
$\delta_3(\varepsilon)=6$ et une autre
au moyen d'un ${\mathfrak l}$ convenable dont la classe est d'ordre $3$ et tel que
${\mathfrak l}^3 = (b)$, $\delta_3(b)\geq 1$. Plus pr\'ecis\'ement~:

\smallskip
Soit $L = \Q(\sqrt {10942}, \sqrt {-3})$ contenant  le corps miroir $k^*$~; 
comme $3$ est ramifi\'e dans $k^*/\Q$ et d\'ecompos\'e dans $k$, 
le $3$-rang de ${\mathcal T}_L$ est \'egal \`a~:
$${\rm rg}_3(\Cl_L^{S_L}) + \order S_L - 1 = {\rm rg}_3(\Cl_L^{S_L})+1$$
\cite[Proposition III.4.2.2]{Gra1},
o\`u $S_L$ est l'ensemble des $3$-places de $L$.

\smallskip
Or $\Cl_L^{S_L}$ est la somme directe de $\Cl_k^{S_k} = 1$
(car $S_k$ engendre $\Cl_{k}$) et de $\Cl_{k^*}$ (car $S_{k^*}$
est r\'eduit \`a une unique $3$-place de carr\'e $(3)$).

\smallskip
Donc ${\rm rg}_3({\mathcal T}_L)=3$ puis, comme 
${\mathcal T}_L = {\mathcal T}_k \plus  {\mathcal T}_{k^*}$
et comme ${\rm rg}_3({\mathcal T}_k)=1$ \cite[Corollary III.4.2.3]{Gra1}, on a
${\rm rg}_3({\mathcal T}_{k^*}) = 2 = {\rm rg}_3(\Cl_{k^*})$,
et toute extension cyclique $3$-ramifi\'ee de degr\'e $3$ de $k^*$
est n\'ecessaire\-ment contenue dans le corps de 
Hilbert de $k^*$ et est donc non ramifi\'ee~; d'o\`u le 
fait que pour tout ${\mathfrak l}$ dont la classe est d'ordre $3$ avec
${\mathfrak l}^3 = (b)$, n\'ecessairement $b$ est $3$-primaire
(i.e., $\delta_3(b) \geq 1$), ce qui explique que  
exceptionnellement ${\sf C0}=0$. 

\smallskip
Mais bien entendu, pour les ${\mathfrak l} = (b)$ principaux, 
on a vu que la propri\'et\'e de r\'eparti\-tion uniforme 
des $\delta_3(b)$ reste vraie en toute circonstance~; ainsi la condition
$\delta_3(\eta_\ell)=0$ suppose de plus ${\mathfrak l}$ principal 
(probabilit\'e $\frac{1}{3}$).

\subsection{\bf Exemples de corps $k$ avec $\Cl_k^{S_k} \ne 1$,
$\delta_3(\varepsilon) \geq1$ \& $\delta_{\mathfrak p}(\eta_3) \geq1$}\label{2sub5}
On a trouv\'e les cas suivants (avec $h=3$), pour lesquels aucun des deux points 
de la condition suffisante de nullit\'e de $\lambda$ et $\mu$ n'est v\'erifi\'e~;
on donne en outre la structure du groupe des classes de $k_1$~:

\medskip
 (i) $m=26893, \varepsilon=142445225/2 \cdot x + 23359714011/2$,

\qquad $\eta_3 = -x - 164$,

\qquad $\delta_3(\varepsilon)=3$ \& $\delta_{\mathfrak p}(\eta_3)=3$,
structure=[36, [18, 2]].

\medskip
 (ii) $m=31069, \varepsilon=933602804601721/2 \cdot x + 164560570852019805/2$,

\qquad $\eta_3 = -23257 \cdot x + 4099372$,

\qquad $\delta_3(\varepsilon)=3$ \& $\delta_{\mathfrak p}(\eta_3)=1$,
structure=[27, [3, 3, 3]].

\medskip
 (iii) $m=92269, \varepsilon=182039966136652680262184737485085/2 \cdot x $

\hfill $+ 55296119237149041291682191243785961/2$,

\qquad $\eta_3 = -3397805798209/2 \cdot x - 1032111126747851/2$,

\qquad $\delta_3(\varepsilon)=3$ \& $\delta_{\mathfrak p}(\eta_3)=1$,
structure=[9, [9]]

\medskip
 (iv) $m=94918, \varepsilon=188160617208817500397435811509434 \cdot x  $

\hfill $+ 57969962353214358861329455735908197$,

\qquad $\eta_3 =-8591 \cdot x - 2646781$,

\qquad $\delta_3(\varepsilon)=3$ \& $\delta_{\mathfrak p}(\eta_3)=2$,
structure=[27, [9, 3]]

\medskip
 (v) $m=171061, \varepsilon=900555961068792369443990032360047045/2 \cdot x $

\hfill$+ 372465634300948242809059190380968649273/2$,

\qquad $\eta_3 =902353/2 \cdot x - 373208881/2$,

\qquad $\delta_3(\varepsilon)=4$ \& $\delta_{\mathfrak p}(\eta_3)=4$,
structure=[9, [9]].

\subsubsection{Remarques sur les exemples pr\'ec\'edents}
(i) Pour $m = 31069$ et $r=3$, on trouve des r\'esultats analogues 
\`a ceux du second exemple o\`u ${\sf C0}=0$~:
${\sf NL3} = 3721754$, ${\sf C0}=0$, ${\sf C1}= 2480548$, ${\sf C2}= 826613$, 
${\sf C3}= 276142$, ${\sf C4}= 92283$, ${\sf C5}= 46168$.

\smallskip
(ii) Pour $m \in \{26893, 92269, 94918, 171061\}$ et $r=3$, on trouve
comme pour le premier exemple~:
${\sf C0 = NL3}$, ${\sf C1 = C2 = C3 = C4 = C5} = 0$.

\section{Equation d'\'evolution $(y) = {\mathfrak B}\!\cdot \! {\mathfrak A}^{1-\sigma}$
--  Obstruction $p$-adique}\label{sec3}

Cette section est consacr\'ee \`a  l'observation du passage de l'\'etape $i$
de l'algo\-rithme \`a l'\'etape $i+1$, c'est-\`a-dire \`a l'obtention de nouveaux id\'eaux
pour constituer ${\mathcal I}_{i+1}^n$ \`a partir de ${\mathcal I}_i^n$ (cf. Th\'eor\`eme 
\ref{filtration} \& \eqref{lambda})
et sur la question de savoir si ces id\'eaux et leurs normes
sont \go al\'eatoires\gf ou non par rapport aux pr\'ec\'edents.

\subsection{\bf Point fondamental de l'algorithme de calcul 
des $\order \Cl_{k_n}$}\label{3sub1}
Une fois le groupe $\Lambda_i^n$ d\'etermin\'e, l'\'etape suivante de l'algo\-rithme 
consiste \`a trouver les $x \in \Lambda_i^n$ normes locales en $p$, ce qui 
r\'esulte des valeurs des $\delta_p(x)$ (Relation \eqref{qf}). 
On pose alors, en vertu du th\'eor\`eme des normes de Hasse~: 
$$\hbox{$x = {\rm N}_{K/k}(y),\ \ y\in K^\times$, \ \hbox{d\'efini 
modulo $K^\times{}^{1-\sigma }$}}, $$
et comme un tel $x$ est par d\'efinition norme dans $K/k$ 
d'un id\'eal ${\mathfrak B} \in {\mathcal I}_i^n$, on a l'existence de 
${\mathfrak A}$ \'etranger \`a $p$, tel que~: 
$$(y) = {\mathfrak B}\cdot {\mathfrak A}^{1- \sigma}, $$
et le but est de v\'erifier, au moyen de statistiques num\'eriques, 
l'ind\'epen\-dance du nouveau pas $i+1$ de l'algorithme par rapport 
au pas $i$ pr\'ec\'edent (autrement dit, que ${\mathfrak A}$ n'a aucune relation 
{\it alg\'ebrique} avec ${\mathfrak B}$ et constitue un nouveau 
\go tirage probabiliste\gf\!\!\!). Comme ${\mathfrak A}$ peut \^etre 
d\'efini au produit pr\`es par $({\mathfrak a}) \cdot (z)$, o\`u 
$({\mathfrak a})$ est l'\'etendu d'un id\'eal de $k$ et $z \in K^\times$,
${\rm N}_{K/k}({\mathfrak A})$ est d\'efini au produit
pr\`es par ${\mathfrak a}^{p^n} \cdot {\rm N}_{K/k}(z)$ 
dont le symbole d'Artin dans $F/k$ est trivial pour $n \gg 0$, d'o\`u
l'aspect intrins\`eque du processus.
Ce point est l'\'el\'ement crucial de l'analyse heuristique 
de la conjecture de Greenberg.

\smallskip
Ensuite on s'int\'eresse \`a ${\mathfrak A}$ pour construire ${\mathcal I}_{i+1}^n$, 
puis on consid\`ere sa norme ${\rm N}_{K/k}({\mathfrak A}) \in 
{\rm N}_{K/k}({\mathcal I}_{i+1}^n)$ pour obtenir
$\Lambda_{i+1}^n \supseteq \Lambda_{i}^n$, afin de prendre 
les $x'  \in \Lambda_{i+1}^n$ tels que 
$x'$ soit la norme d'un $y' \in K^\times$, etc., sachant que sous reserve
des Hypoth\`eses (H), fin du \S\,\ref{sub2}, on aura statistiquement des 
$x \in \Lambda_{i+1}^n$ tels que $\delta_p(x) < \delta_p(\Lambda_{i}^n)$, 
ce qui fait d\'ecro\^itre le facteur normique tandis que les classes des 
composantes ${\mathfrak t}$ des ${\rm N}_{K/k}({\mathfrak A})$ font 
d\'ecro\^itre le facteur classes.

\subsubsection{Remarques sur~: \go entiers normes locales\gf 
vs \go normes d'entiers\gf}\label{entiers}
(i) Le th\'eor\`eme des normes
de Hasse pour les extensions cycliques est, au plan num\'erique, assez
probl\'ematique et influence fortement notre d\'emarche heuristique puisque
il n'existe (\`a notre connaissance) aucune formule permettant de passer 
du local au global~; autrement dit, \`a supposer que les solutions locales
$y_w$ dans les compl\'et\'es $K_{w}$ au-dessus de $v$ \`a l'\'equation 
normique $x = {\rm N}_{K_{w}/k_v}(y_w)$, soient connues
pour toute place $v$ de $k$ et $w \mid v$ de $K$, il n'est pas possible 
d'en d\'eduire une solution globale $y$.

\smallskip
(ii) La relation $x = {\rm N}_{K/k}(y)$, pour $x \in k^\times$ partout norme 
locale dans $K/k$ et {\it entier} (cas auquel on peut toujours se ramener), est 
assez subtile car $(x)$ est norme d'un id\'eal {\it entier} ${\mathfrak B}$, 
et s'il existe une solution $y$ {\it enti\`ere}, alors $(y)={\mathfrak B}$ 
est principal dans $K$~; 
dans le cas contraire, on peut \'ecrire $y = \Frac{z}{\Delta}$, 
$\Delta \in \Z$, $z$ {\it entier} de $K$, 
et il est facile de constater que l'id\'eal ${\mathfrak A}$ de la relation 
$(y) = {\mathfrak B}\cdot {\mathfrak A}^{1- \sigma}$ est 
essentiellement construit avec les id\'eaux premiers au-dessus 
de $\Delta $ dans $K$. Ce d\'enominateur d\'efinit de fait le caract\`ere 
al\'eatoire de l'algorithme.

\smallskip
(iii) Il faut signaler le cas trivial $x=1$ et l'\'equation ${\rm N}_{K/k}(y)=1$ 
qui conduit, par le Th\'eor\`eme\,$90$ de Hilbert, \`a $y = z^{1-\sigma}$, 
$z \in K^\times$, o\`u une solution $z_0$ est donn\'ee, pour une extension 
cyclique de degr\'e $N$, par une r\'esolvante de Hilbert de la forme~:
$$z_0= t+y \cdot t^\sigma + \cdots + 
y^{1+ \sigma +\cdots + \sigma^{i-1}}\!\! \cdot \ t^{\sigma^i}
+ \cdots + y^{1+ \sigma +\cdots + \sigma^{N-2}}\!\!\cdot \ t^{\sigma^{N-1}}$$
pour $t \in K^\times$ convenable rendant $z_0$ non nul~; 
mais l'aspect additif sugg\`ere justement une part totalement al\'eatoire.
Dans le cas de l'application du th\'eor\`eme des normes de Hasse et
de l'\'equation norme en id\'eaux qui en r\'esulte, il n'y a pas de r\'esolvante
explicite, mais on peut penser qu'il y a une complexit\'e du m\^eme ordre.

\begin{lemma} Sans modifier l'algorithme ni les statistiques, 
on peut sup\-poser que ${\mathfrak B}$ est un id\'eal premier 
${\mathfrak L}$ totalement d\'ecompos\'e dans~$K/\Q$. 
Dans la relation $(y) = {\mathfrak L}\cdot {\mathfrak A}^{1-\sigma}$ 
qui s'en d\'eduit, on peut supposer que ${\mathfrak A}$  est un 
id\'eal premier ${\mathfrak Q}$, totalement d\'ecompos\'e dans $K/\Q$.
\end{lemma}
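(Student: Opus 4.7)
Le plan repose sur le théorème de densité de Chebotarev dans $H_K/\mathbb{Q}$, déjà invoqué au début du \S\,\ref{2sub2}, combiné avec l'invariance des deux facteurs de \eqref{cr} sous modifications par des idéaux principaux de~$K$.

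Pour la première assertion, étant donné $\mathfrak{B} \in \mathcal{I}_i^n$, je choisirais, dans la classe de $\mathfrak{B}$ dans $\Cl_K$, un idéal premier $\mathfrak{L}$ totalement décomposé dans $K/\mathbb{Q}$ ; l'existence d'une infinité de tels $\mathfrak{L}$ est garantie par Chebotarev appliqué à $H_K/\mathbb{Q}$. On écrit alors $\mathfrak{B} = \mathfrak{L} \cdot (\alpha)$ pour un $\alpha \in K^\times$, d'où
\[
{\rm N}_{K/k}(\mathfrak{B}) = {\rm N}_{K/k}(\mathfrak{L}) \cdot ({\rm N}_{K/k}(\alpha)).
\]
L'argument déjà donné au \S\,\ref{2sub2} (invariance du facteur classes modulo un idéal principal de~$k$, et du facteur normique modulo $E_k \cdot {\rm N}_{K/k}(K^\times)$) assure que cette substitution $\mathfrak{B} \leadsto \mathfrak{L}$ ne modifie ni l'algorithme ni les statistiques.

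Pour la seconde assertion, partant de la relation $(y) = \mathfrak{L}\cdot\mathfrak{A}^{1-\sigma}$ qui résulte de $x = {\rm N}_{K/k}(y)$, j'appliquerais de nouveau Chebotarev pour obtenir un premier $\mathfrak{Q}$ totalement décomposé dans $K/\mathbb{Q}$ situé dans la classe de $\mathfrak{A}$, soit $\mathfrak{A} = \mathfrak{Q}\cdot(\beta)$, $\beta \in K^\times$. En posant $y' := y\cdot\beta^{\sigma-1}$, on calcule directement
\[
(y') = (y)\cdot(\beta)^{\sigma-1} = \mathfrak{L} \cdot \mathfrak{A}^{1-\sigma} \cdot (\beta)^{\sigma-1} = \mathfrak{L} \cdot \mathfrak{Q}^{1-\sigma},
\]
et comme $\beta^{\sigma-1}$ est de norme triviale dans $K/k$ (Théorème~$90$ de Hilbert), ${\rm N}_{K/k}(y') = {\rm N}_{K/k}(y) = x$ demeure inchangé. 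Ainsi $x \in \Lambda_i^n$ et la relation voulue est obtenue avec $\mathfrak{A}$ premier totalement décomposé.

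Le point délicat sera de vérifier que cette substitution $\mathfrak{A} \leadsto \mathfrak{Q}$ ne perturbe pas non plus les statistiques ultérieures portant sur ${\rm N}_{K/k}(\mathfrak{A})$ via son symbole d'Artin dans $F/k$, utilisé au pas suivant pour construire ${\mathcal I}_{i+1}^n$. Or, comme rappelé au début du \S\,\ref{3sub1}, $\mathfrak{A}$ est de toute façon défini à $({\mathfrak a})\cdot (z)$ près, où $({\mathfrak a})$ est l'étendu d'un idéal de~$k$ et $z \in K^\times$, de sorte que ${\rm N}_{K/k}(\mathfrak{A})$ est défini à $\mathfrak{a}^{p^n}\cdot {\rm N}_{K/k}(z)$ près, facteur dont le symbole d'Artin dans $F/k$ est trivial pour $n \gg 0$ puisque $\big(\frac{H_k^{\rm pr}/k}{\mathfrak{a}^{p^n}}\big) \in \Gamma_\infty^{p^n}$ se projette trivialement sur ${\rm Gal}(F/k)$. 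La modification par $(\beta)$ relève exactement de cette indétermination, ce qui préserve l'aspect intrinsèque du processus et donc la validité statistique de l'analyse.
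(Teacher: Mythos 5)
Your proposal is correct and follows essentially the same route as the paper: use Chebotarev in $H_K/\Q$ to replace first $\mathfrak B$, then $\mathfrak A$, by a totally split prime in the same class of $\Cl_K$, then observe that the passage from $\mathfrak B$ to $\mathfrak L$ only changes $x$ modulo ${\rm N}_{K/k}(K^\times)$, while the passage from $\mathfrak A$ to $\mathfrak Q$ amounts to replacing $y$ by $y\cdot\beta^{\sigma-1}$, i.e.\ modifying $y$ within $K^{\times(1-\sigma)}$, which leaves ${\rm N}_{K/k}(y)=x$ fixed and alters ${\rm N}_{K/k}(\mathfrak A)$ only within the ambiguity $\mathfrak a^{p^n}\cdot{\rm N}_{K/k}(z)$ already identified in \S\,\ref{3sub1}. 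Your version is in fact a little more explicit than the paper's two-sentence argument (the paper simply asserts that $y$ may be modified mod $K^{\times(1-\sigma)}$ and that $\mathfrak A$ is thus defined modulo a principal ideal), and the closing paragraph correctly ties the substitution back to the $\mathfrak t$-invariance that the paper records in its last sentence; the only cosmetic nit is that ${\rm N}_{K/k}(\beta^{\sigma-1})=1$ is immediate and does not require Hilbert's Theorem~$90$, which is the converse implication.
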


\begin{proof}
Dans l'algorithme, on peut modifier tout \'el\'ement $x \in \Lambda_i^n$ modulo 
${\rm N}_{K/k}(K^\times)$, ce qui est \'equivalent 
\`a choisir ${\mathfrak B}$ modulo un id\'eal principal~; on peut donc 
supposer ${\mathfrak B}={\mathfrak L}$ premier, totalement d\'ecom\-pos\'e 
dans $K/\Q$.

\smallskip
Il est clair que $y$ peut \^etre modifi\'e modulo $K^{\times 1-\sigma}$ puisque 
seul ${\rm N}_{K/k}({\mathfrak A})$ est utilis\'e~; par cons\'equent,
${\mathfrak A}$ peut \^etre aussi d\'efini modulo un id\'eal principal, et
on peut supposer que ${\mathfrak A}={\mathfrak Q}$, premier
totalement d\'ecom\-pos\'e dans $K/\Q$.
Ceci \'equivaut au fait que ${\rm N}_{K/k}({\mathfrak Q}) =: {\mathfrak q}$ est 
un id\'eal premier au-dessus de $q \equiv \pm 1 \pmod {p^{n+1}}$ dont
la composante ${\mathfrak t}$ est inchang\'ee. $\square$
\end{proof}

\subsubsection{Remarques sur le calcul de PARI}
(i) Dans les calculs, PARI ne donne pas 
directement ${\mathfrak A}={\mathfrak Q}$, mais le plus souvent un id\'eal 
${\mathfrak A}$ de la forme ${\mathfrak Q}^\omega$, $\omega \in 
\Z[{\rm Gal}(K/\Q)]$ non n\'ecessairement \'egal \`a $1$, ce qui conduit \`a 
la relation~:
$${\rm N}_{K/k}({\mathfrak A}) = {\rm N}_{K/k}({\mathfrak Q})^{\omega(1)} 
=: {\mathfrak q}^{\omega(1)}, $$
o\`u ${\mathfrak q}$ est l'id\'eal premier de $k$ au-dessous de ${\mathfrak Q}$
et $\omega(1) \in\Z$ l'image de $\omega$ dans l'application d'augmentation.
On a donc \`a r\'esoudre, en un id\'eal premier ${\mathfrak Q}$~:
$$(y) = {\mathfrak L}\cdot {\mathfrak Q}^{\omega \cdot (1-\sigma)}, $$
o\`u ${\mathfrak L}$, id\'eal premier de $K$ totalement d\'ecompos\'e, est donn\'e 
tel que~:
\begin{equation}\label{equa7}
{\rm N}_{K/k}({\mathfrak L})={\mathfrak l}=:(x)=({\rm N}_{K/k}(y))
\end{equation}
(id\'eal premier principal de $k$ au-dessous de ${\mathfrak L}$).

\medskip
(ii) L'aspect statistique devra v\'erifier que 
${\mathfrak l}={\rm N}_{K/k}({\mathfrak L})$ et 
${\mathfrak q}^{\omega(1)}\!= {\rm N}_{K/k}({\mathfrak Q}^\omega)$ 
sont {\it  ind\'ependants} du point de
vue classes d'id\'eaux et propri\'et\'es $p$-adiques (au sens des sections 
pr\'ec\'edentes). Ces calculs seront programm\'es au \S\,\ref{3sub3}.

\subsection{\bf Remarques heuristiques fondamentales}\label{3sub2}
(i) On peut objecter que notre d\'emarche pose question 
en ce sens que l'on peut concevoir logiquement les deux 
\go implications\gf sui\-vantes~:

\smallskip
\quad (a) C'est l'ensemble des groupes de classes des $k_n$ 
(donc les valeurs de $\lambda$ et $\mu$) qui \go pr\'eexistent\gf et 
qui \go imposent\gf\!\!, pour chaque $n$, les algorithmes num\'e\-riques 
qui les d\'eterminent et en particulier qui imposent, quel que soit $n$,
le nombre de pas $m_n=O(1)\cdot (\lambda\cdot n + \mu\cdot p^n)$, 
non born\'e si $\lambda$ ou $\mu$ est non nul, alors que la complexit\'e
de l'algorithme ne d\'epend que du groupe fini ${\mathcal T}_k$. 

\smallskip
\quad (b) On peut au contraire, {\it en examinant la nature des calculs},
se convain\-cre du fait que ce sont bien ces calculs impr\'evisibles 
(alg\'ebriquement parlant) qui conditionnent les r\'esultats et dire que ce sont plut\^ot les 
algorithmes num\'eriques qui \go font exister\gf les groupes de classes pour chaque $n$, 
puis leur limite projective. 
Ces calculs sont fond\'es sur la r\'esolution, en l'inconnue ${\mathfrak A}$, 
de l'\'equation pr\'ec\'edente $(y) = {\mathfrak B}\cdot {\mathfrak A}^{1-\sigma}$ 
(cf. Remarques \ref{entiers})
et sur la d\'etermination de quotients de Fermat 
$\big(\frac{x^{p-1}-1}{p}\big) =: \prod_{{\mathfrak p} \in S_k}
{\mathfrak p}^{\delta_{\mathfrak p}(x)} \!\cdot {\mathfrak b}_{(x)}$, 
${\mathfrak b}_{(x)}$ \'etranger \`a~$p$, conduisant aux 
$\delta_{\mathfrak p}(x)$. 

\smallskip
A priori, il n'y a pas d'obstruction \`a une r\'epartition uniforme des 
composantes ${\mathfrak t} \in {\mathcal T}_k$ associ\'ees aux 
${\rm N}_{K/k}({\mathfrak A})$ en raison des propri\'et\'es du 
symbole d'Artin de ${\rm N}_{K/k}({\mathfrak A})$
dans $\Gamma_\infty^{p^n} \oplus {\mathcal T}_{k}$
(Th\'eor\`eme \ref{delta} et suite exacte \eqref{suite}) ou
vu dans ${\rm Gal}(F/k)$. Or ${\mathfrak t}$ 
g\`ere les deux facteurs (classes et normique) dans le cadre
habituel du corps de classes associ\'e aux th\'eor\`emes de densit\'e.
Par contre, la succession des id\'eaux de ${\mathcal I}_i^n$ \`a
${\mathcal I}_{i+1}^n$ n'est pas alg\'ebriquement pr\'evisible.

\smallskip
Pour l'aspect statistique sur la donn\'ee num\'erique du corps
$k$, il est impossible de s'affranchir du fait
que la structure arithm\'etique de $K/k$ commande les \'etapes 
de l'algorithme. D'o\`u la n\'ecessit\'e de consid\'erer des
familles de corps~$k$.

\smallskip
 (ii) Sur un plan math\'ematique, on peut penser que la notion de 
complexit\'e algorithmique de ce type de calculs (\'equations pr\'ec\'edentes et 
passage \`a la limite dans la tour) d\'epend de ph\'enom\`enes de 
transcendance (complexe et/ou $p$-adique) comme pour le cas de 
la conjecture de Leopoldt que l'on peut consid\'erer comme de
nature proche de celle du point de vue (b) ci-dessus sur la 
conjecture de Greenberg~:

\smallskip
En effet, pour la conjecture de Leopoldt, 
la complexit\'e repose sur les calculs, modulo $p^n$, des d\'eterminants 
des d\'eveloppements $p$-adiques des logarithmes d'une unit\'e de Minkowski 
du corps $k$ et de ses conjugu\'ees. Ici, les algorithmes sont li\'es par la condition 
(triviale) de r\'eduction modulo $p^n$ du calcul modulo $p^{n+h}$, $h \geq 0$
(voir \cite{Gra6} pour l'\'etude des r\'egulateurs $p$-adiques).

\smallskip
 (iii) Par ailleurs, l'influence de la complexit\'e arithm\'etique du corps 
de base $k$ et celle de $p$ (quant \`a son ordre de grandeur) sont 
manifestes comme le montrent les exemples suivants~:

\smallskip
\quad (a) Pour la conjecture de Leopoldt, on peut trouver des 
r\'egulateurs $p$-adiques arbitrairement proches de $0$ en 
prenant par exemple des corps quadratiques $k = \Q(\sqrt m)$
avec $m= a^2\cdot p^{2 \rho} + 1$, suppos\'e sans facteur carr\'e, auquel cas
l'unit\'e fondamentale de $k$ est $\varepsilon = a\cdot p^\rho + \sqrt m$ dont le 
logarithme $p$-adique est \'equivalent \`a $p^\rho$, et cependant la 
conjecture de Leopoldt est ici trivialement vraie. 
Par exemple, $\varepsilon= 3^{26}+\sqrt m$, o\`u
$m=2 \!\cdot\! 17 \!\cdot\! 193 \!\cdot\! 
1249 \!\cdot\! 13729 \!\cdot\! 475356961 \!\cdot\! 780464337846444296785447886881
=1+3^{52}$,
pour laquelle $\delta_3(\varepsilon)=25$).

\smallskip
\quad (b) Le cas de la conjecture de Greenberg est plus d\'elicat, mais les aspects
num\'eriques d\'ependent essentiellement du groupe de torsion ${\mathcal T}_k$ 
(qui conjugue $p$-groupe des classes $\Cl_k$ et r\'egulateur $p$-adique 
normalis\'e ${\mathcal R}_k = U_k^* / \overline E_k$)~;
en particulier, on sait \cite[Th\'eor\`emes 4.7, 4.8, 4.10]{Gra3} 
que l'exposant $p^e$ du groupe $U_k^* / \overline E_k$ est 
une premi\`ere mesure de la complexit\'e. Enfin, d'apr\`es \cite{J2}, 
cette complexit\'e est aussi mesur\'ee par le comportement du 
{\it  groupe des classes logarithmiques} de $k$ dans~$k_\infty$.

\smallskip
 (iv) Il faut ajouter que, comme pour la conjecture de Leopoldt, 
les algorithmes de d\'evissage relatifs \`a la conjecture de Greenberg 
ne sont pas ind\'ependants par rapport \`a $n$
(en effet, c'est la th\'eorie d'Iwasawa qui structure leur ensemble \`a
partir d'un rang fini, 
ainsi que la th\'eorie du corps de classes qui indique par exemple que, 
pour tout $h \geq 0$, ${\rm N}_{k_{n+h}/k_n}(\Cl_{k_{n+h}})=\Cl_{k_n}$). 

\smallskip
Ce lien est exprim\'e par le fait que, pour $i \geq 1$ fix\'e
et tout $h\geq 1$, modulo des normes globales convenables
dans les extensions $k_{n+h}/k$ (ce qui ne modifie pas 
les indices $(\Lambda_i^{n+h} :  \Lambda_i^{n+h} \cap 
{\rm N}_{k_{n+h}/k}(k_{n+h}^\times))$), 
on peut obtenir, \`a partir du sch\'ema de $g$-modules 
(cf. Th\'eor\`eme \ref{filtration} \& \eqref{lambda})~:
\begin{equation}\label{schema}
\begin{array}{ccccccccc}  
1  & \too & M_i^{n+h} & \toooo & M^{n+h} &  \stackrel{(1-\sigma_{n+h})^i}{\toooo} 
&  (M^{n+h})^{(1-\sigma_{n+h})^i} &  \too 1 \\   \vspace{-0.4cm}   \\
& &   \Big \downarrow& & \hspace{-1.4cm}  {\rm N}_{k_{n+h}/k_n} \Big \downarrow & 
&\hspace{-1.4cm}  {\rm N}_{k_{n+h}/k_n} \Big \downarrow &  \\  \vspace{-0.5cm}   \\
1  &  \too   & M_i^n  
& \toooo &  M^n & \stackrel{(1-\sigma_n)^i}{\toooo} & (M^n)^{(1-\sigma_n)^i} & \too 1 \\
&&&& \downarrow  && \downarrow & \\
&&&& 1  && 1 &
\end{array} 
\end{equation}
les relations d'inclusions suivantes \cite[\S\,7.1, 
Sch\'ema \& Relation (7.1)]{Gra3}~:
\begin{equation*}
\begin{aligned}
& {\rm N}_{k_{n+h}/k} ({\mathcal I}^{n+h}_i ) \subseteq \cdots 
\subseteq {\rm N}_{k_{n+1}/k} ({\mathcal I}^{n+1}_i )
\subseteq {\rm N}_{k_{n}/k} ({\mathcal I}^n_i) \\
& E_k \subseteq  \Lambda^{n+h}_i \subseteq \cdots 
\subseteq \Lambda^{n+1}_i \subseteq \Lambda^{n}_i,
\end{aligned}
\end{equation*}
qui font que, en particulier, le sous-groupe engendr\'e par les 
composantes ${\mathfrak t}$ des normes d'id\'eaux
\`a l'\'etape $i$ est localement constant lorsque $n$ cro\^it.
Ce r\'esultat provient des fl\`eches verticales (\`a gauche)~:

\smallskip
\centerline{${\rm N}_{k_{n+h}/k_n} : M_i^{n+h} \tooo M_i^n, $}

\smallskip\noindent
qui ne sont a priori ni injectives ni surjectives, et que c'est 
\`a ce niveau que se trouve l'obstruction fondamentale \`a 
une preuve {\it  alg\'ebrique} de la conjecture de Greenberg,
parfois outrepass\'ee dans la litt\'erature.
En effet, si la conjecture est vraie, pour tout $n$ assez grand, 
les fl\`eches pr\'ec\'edentes sont des isomorphismes pour tout $i$
puisqu'alors, les ${\rm N}_{k_{n+h}/k_n}$ sont des isomorphismes de 
$g$-modules.

\smallskip
Il y a donc un lien entre la complexit\'e des algorithmes pour chaque $n$ et les 
valeurs de $\lambda$ et $\mu$~; en raison de la nature des calculs on peut penser 
que de tels algorithmes ont une complexit\'e semblable pour 
tout corps $k$ et tout $n$, ceci \'etant renforc\'e par l'id\'ee que les donn\'ees 
num\'eriques essentielles se lisent dans le corps de base. 

\section{Programmation de l'\'equation d'\'evolution -- Exemples pour $p=3$}

\subsection{\bf Programme g\'en\'eral de recherche de ${\mathfrak A}$ tel que
$(y)={\mathfrak B}\cdot {\mathfrak A}^{1-\sigma}$}\label{3sub3}

Il est impossible de faire des statistiques avec $K$ arbitraire\-ment 
grand dans la tour. 
Nous allons cependant examiner, au moyen d'un programme, 
le principe g\'en\'eral d'obtention de ${\mathfrak A} = {\mathfrak Q}^\omega$ 
et ${\rm N}_{K/k}({\mathfrak A})={\rm N}_{K/k}({\mathfrak Q}^\omega) 
= {\mathfrak q}^{\omega(1)}$ \`a partir de la relation \eqref{equa7}
du \S\,\ref{3sub1}
${\rm N}_{K/k}(y) = {\rm N}_{K/k}({\mathfrak L}) = {\mathfrak l} =: (x)$,
o\`u ${\mathfrak l} \mid \ell$ est un id\'eal premier principal de $k$
et o\`u $x =: \eta_\ell$ est norme locale en $p$, donc norme globale 
dans l'extension $K/k$. Ici $\eta_\ell$ est donc la ${\mathfrak l}$-unit\'e
fondamentale (de valuation $1$), d\'etermin\'ee \`a une unit\'e pr\`es.

\smallskip
Nous nous limitons \`a prendre $K=k_1$, ce qui reste significatif car
l'algorithme de filtration de $\Cl_{k_1}$ reste non trivial~; en effet, si la formule de 
Chevalley donne un premier ordre de grandeur, rien n'exige que l'algorithme
soit born\'e de fa\c con effective. 

\smallskip
Le programme g\'en\'eral I, qui d\'etermine la solution $y \in k_1^\times$ 
et la factorisation de ${\mathfrak A} = {\mathfrak Q}^\omega$ 
dans $(y) = {\mathfrak L} \!\cdot \! {\mathfrak A}^{1-\sigma}$,
se d\'ecompose en deux sous-programmes~:

\smallskip
 (i) le premier consid\`ere un id\'eal premier principal ${\mathfrak l}=(\eta_\ell)$ de $k$
pour lequel $\eta_\ell$ est norme globale dans l'extension $k_1/k$~; on supposera
$\ell$ totalement d\'ecompos\'e dans $k_{n}/k$, $n \geq 1$ (ici $n=8$), 
donc, a fortiori, de la forme ${\rm N}_{k_1/k}({\mathfrak L})$ pour ${\mathfrak L} 
\mid {\mathfrak l}$ dans $k_1$. Les r\'esultats ne d\'ependent 
pas du choix de $n$, ce qui renforce les heuristiques.

\smallskip
 (ii) le second utilise les $\eta_\ell$ pr\'ec\'edents pour calculer
(au moyen de  la fonction ${\sf rnfisnorminit}$ de PARI dans l'extension relative $k_1/k$)
$y \in k_1^\times$ tel que $(y) = {\mathfrak L}\cdot {\mathfrak A}^{1-\sigma}$,
et donne la factorisation de l'id\'eal ${\mathfrak A}={\mathfrak Q}^\omega$ dont la norme
${\mathfrak q}^{\omega(1)}$ dans $k_1/k$ contribue au pas suivant de l'algorithme.

\subsubsection{Programme I~:  Id\'eaux 
${\mathfrak l}=(\eta_\ell)$ et  ${\mathfrak A}$ tels 
que $(y) = {\mathfrak L}\cdot {\mathfrak A}^{1-\sigma}$} \label{67}

On rappelle que $k=\Q(\sqrt m)$, pour $m\equiv 1 \pmod 3$.
La premi\`ere partie du programme
redonne les caract\'eristiques du corps $k$ (nombre de classes $h$,
unit\'e fondamentale $\varepsilon$, $S_k$-unit\'e fondamentale $\eta_3$,
$\delta_{\mathfrak p}(\eta_3)$, $\delta_3(\varepsilon) \geq 1$, sous la forme 
$\Frac{1}{3^{\delta_{\mathfrak p}(\eta_3)}}$, $\Frac{1}{3^{\delta_3(\varepsilon)}}$)~; 
il calcule une liste de nombres premiers 
$\ell$ totalement d\'ecompos\'es dans $k_{n}/\Q$ tels que les id\'eaux premiers 
${\mathfrak l} \mid \ell$ soient principaux de la forme $(\eta_\ell)$ o\`u 
$\delta_3(\eta_\ell) \geq 1$ afin que $\eta_\ell$ soit partout norme locale 
dans $k_1/k$, donc norme globale. 

\smallskip
On recherche les $\ell < {\sf B}$,
$\ell \equiv \pm 1 \pmod{{\sf Mp}=9}$ et ${\mathfrak l} = (\eta_\ell)$ dans $k$, 
o\`u $\eta_\ell$ est calcu\'e sous la forme ${\sf Mod(a*z+b, z^2-m)}$, mais
les $\delta_{\mathfrak p}(\eta_3)$ et $\delta_3(\varepsilon)$ n\'ecessitent 
des calculs modulo une puissance de $3$ suffisante.
Il fournit les r\'esultats suivants indiquant (ici pour $m=67$) 
que $\delta_{\mathfrak p}(\eta_3)=1$ et $\delta_3(\varepsilon)=2$~:

\footnotesize
\begin{verbatim}
PB-NORMIQUE
m=67,h=1,E=5967*z+48842
p=3,Eta=z+8
1/3  1/9

List([67,9,List([991,883,487,379,953,773,683,557,251]),
Mod(4*z-9,z^2-67),Mod(31*z-252,z^2-67),Mod(-32*z+261,z^2-67),
Mod(5*z-36,z^2-67),Mod(4*z+45,z^2-67),Mod(-122*z+999,z^2-67),
Mod(18*z-145,z^2-67),Mod(-14*z-117,z^2-67),Mod(-45*z+368,z^2-67)])
\end{verbatim}
\normalsize

La liste obtenue contient dans l'ordre, $m$, le nombre de $\ell$ 
trouv\'es, et enfin la liste des $\eta_\ell$ qui sera exploit\'ee par la
troisi\`eme partie du programme.
La factorisation de $(y)$ se fait en d\'efinissant $k_1$ au moyen 
du polyn\^ome irr\'eductible $R$ de $X=(\zeta_9+\zeta_9^{-1}) \cdot \sqrt m$,
o\`u $\zeta_9$ est une racine primitive $9$-i\`eme de l'unit\'e~:
$$R=x^6-6m\,x^4+ 9 m^2 x^2 - m^3.$$

On obtient (toujours avec $m=67$) la factorisation de $(y)$ 
en id\'eaux premiers, ce qui permet d'en d\'eduire
${\mathfrak A}$ et ${\rm N}_{k_1/k}({\mathfrak A})$:

\smallskip
\footnotesize
\begin{verbatim}
==============================================================================
{p=3;m=67;n=8;n0=n+1;B=10^5;y=z;Q=z^2-m;K=bnfinit(Q,1); 
h=component(component(bnrinit(K,1),5),1);
E=component(component(component(K,8),5),1);
Su=bnfsunit(K,idealprimedec(K,p));
pi1=component(component(Su,1),1);
pi2=component(pi1,2)*z-component(pi1,1);
Pi1=pi1^n0;Pi2=pi2^n0;Z=bezout(Pi1,Pi2);
U1=component(Z,1);U2=component(Z,2);
Pk=y^2-Mod(m,p^n0);Y=Mod(y,Pk);z=Y;A1=eval(U1);A2=eval(U2);
B1=eval(Pi1);B2=eval(Pi2);b1=eval(pi1);b2=eval(pi2);e=eval(E);
XPpi=Mod(A1*B1+A2*B2*b2,Pk);XPe=Mod(A1*B1+A2*B2*e,Pk);
hs=norm(Mod(pi1,Q));h0=valuation(hs,p);vh0=valuation(h0,p);delta=vh-vh0;
npi=norm(XPpi)^(p-1);ne=norm(XPe)^(p-1);
zpi=znorder(npi)/p^n;ze=znorder(ne)/p^n;
if(delta!=0,print("PB-CLASSES"));if(zpi+ze<1,print("PB-NORMIQUE"));
print("m=",m," h=",h," E=",E);print("p=",p," Eta=",pi1);print(zpi," ",ze);

Mp=p^2;Nlist=0;list=List;listL=List;
for(t=-1,0,L=2*t+1;while(L<B,L=L+2*Mp;
if(isprime(L)==1 & kronecker(m,L)==1,
SuL=bnfsunit(K,idealprimedec(K,L));F=component(component(SuL,1),1);
Eta=Mod(F,Q);No=norm(Eta);vcl=valuation(No,L);
if(vcl==1,A=(Mod(F,Q)^2-1)/3;v=valuation(A,3);
if(v>=1,Nlist=Nlist+1;listinsert(listL,L,1);listinsert(list,Eta,1))))));
listinsert(list,m,1);listinsert(list,Nlist,2);listinsert(list,listL,3);
print(list);

bnf=bnfinit(y^2-m);PK=polsubcyclo(9,3)+Mod(0,bnf.pol);
T=rnfisnorminit(bnf,PK,1);R=x^6-6*m*x^4+9*m^2*x^2-m^3;K=nfinit(R);
X=Mod(x,R);racm=m^2/(3*m*X-X^3);z=X^2/m-2;
for(j=1,Nlist,Z=component(list,j+3);ZZ=component(Z,2);
ZZ1=component(ZZ,1);ZZ2=component(ZZ,2);Z=Mod(ZZ1+ZZ2*y,y^2-m);
N=rnfisnorm(T,Z);nu=component(N,2);
if(nu==1,Y0=component(N,1);S=component(Y0,2);
S0=component(S,1);S1=component(S,2);S2=component(S,3);
if(S2==0,a1=0;a0=0);if(S2!=0,s2=component(S2,2);
a0=component(s2,1);a1=component(s2,2));
if(S1==0,b1=0;b0=0);if(S1!=0,s1=component(S1,2);
b0=component(s1,1);b1=component(s1,2));
if(S0==0,c1=0;c0=0);if(S0!=0,s0=component(S0,2);
c0=component(s0,1);c1=component(s0,2));
YY=(a1*racm+a0)*z^2+(b1*racm+b0)*z+c1*racm+c0;F=idealfactor(K,YY);
L=component(listL,j);print(" ");print(L);print(F) ));z=y}
==============================================================================
\end{verbatim}
\normalsize
La factorisation indique chaque
id\'eal premier avec la donn\'ee d'une $\Z$-base et, \`a l'extr\'emit\'e droite,
l'exposant de l'id\'eal~; la norme de l'id\'eal est le premier entier \`a gauche~;
l'id\'eal ${\mathfrak L}$, qui figure par hypoth\`ese, est list\'e en dernier. 
On obtient par exemple la d\'ecomposition triviale (i.e., ${\mathfrak A}=1$)~:

\footnotesize
\begin{verbatim}
Mat([[991,[145,0,0,0,1,0]~,1,1,[385,-256,182,-61,334,-466]~],1]) 
\end{verbatim}
\normalsize
qui d\'ecrit un id\'eal principal ${\mathfrak L}$ de $k_1$ au-dessus de 
$\ell = 991$. Ensuite on a, par exemple, une factorisation de la forme~:

\footnotesize
\begin{verbatim}
[[181,[-55,0,0,0,1,0]~,1,1,[-84,37,11,-9,88,-36]~],1; 
[181,[-7,0,0,0,1,0]~,1,1,[-34,-45,-71,61,-63,-36]~],-1; 
[487,[-110,0,0,0,1,0]~,1,1,[196,51,-28,226,-135,106]~],1] 
\end{verbatim}
\normalsize
qui d\'ecrit le produit de l'id\'eal premier ${\mathfrak L}$ au-dessus de 
$487$ par ${\mathfrak Q}^{1-\sigma}$, o\`u ${\mathfrak Q}$ est un
id\'eal premier au-dessus d'un id\'eal ${\mathfrak q}$ de $k$
divisant $q= 181$ choisi par PARI. 

\medskip
En r\'esum\'e, on obtient, pour $m=67$, les factorisations relatives \`a la
liste des $\ell$ pr\'ec\'edente~:

\footnotesize
\begin{verbatim}
l=991:
Mat([[991,[145,0,0,0,1,0]~,1,1,[385,-256,182,-61,334,-466]~],1])
l=883:
Mat([[883,[-395,0,0,0,1,0]~,1,1,[67,287,91,-236,-207,74]~],1])
l=487:
[[181,[-55,0,0,0,1,0]~,1,1,[-84,37,11,-9,88,-36]~],1;
[181,[-7,0,0,0,1,0]~,1,1,[-34,-45,-71,61,-63,-36]~],-1;
[487,[-110,0,0,0,1,0]~,1,1,[196,51,-28,226,-135,106]~],1]
l=379:
[[181,[-55,0,0,0,1,0]~,1,1,[-84,37,11,-9,88,-36]~],1;
[181,[-7,0,0,0,1,0]~,1,1,[-34,-45,-71,61,-63,-36]~],-1;
[379,[129,0,0,0,1,0]~,1,1,[-141,1,31,-12,-137,-59]~],1]
l=953:
[[181,[-55,0,0,0,1,0]~,1,1,[-84,37,11,-9,88,-36]~],2;
[181,[-7,0,0,0,1,0]~,1,1,[-34,-45,-71,61,-63,-36]~],-2;
[953,[-53,0,0,0,1,0]~,1,1,[202,374,-333,-215,-115,-276]~],1]
l=773:
Mat([[773,[-145,0,0,0,1,0]~,1,1,[-14,277,39,-355,343,-149]~],1])
l=683:
[[181,[-55,0,0,0,1,0]~,1,1,[-84,37,11,-9,88,-36]~],1;
[181,[-7,0,0,0,1,0]~,1,1,[-34,-45,-71,61,-63,-36]~],-1;
[683,[-240,0,0,0,1,0]~,1,1,[339,41,269,-141,-283,-292]~],1]
l=557:
[[181,[-55,0,0,0,1,0]~,1,1,[-84,37,11,-9,88,-36]~],1;
[181,[-7,0,0,0,1,0]~,1,1,[-34,-45,-71,61,-63,-36]~],-1;
[557,[-30,0,0,0,1,0]~,1,1,[-67,112,-124,111,-277,33]~],1]
l=251:
[[181,[-55,0,0,0,1,0]~,1,1,[-84,37,11,-9,88,-36]~],1;
[181,[-7,0,0,0,1,0]~,1,1,[-34,-45,-71,61,-63,-36]~],-1;
[251,[5,0,0,0,1,0]~,1,1,[-63,120,-106,-53,89,-29]~],1]
\end{verbatim} 
\normalsize

On constate que PARI utilise bien le m\^eme id\'eal premier ${\mathfrak Q} \mid 181$, 
ici avec $\omega = f$, $f \in \{0, 1, 2\}$, de sorte que ${\mathfrak A}$ est \'egal \`a 
${\mathfrak Q}^0=(1)$ ou \`a ${\mathfrak Q}$ ou \`a ${\mathfrak Q}^2$.
Les 3 cas se pr\'esentent effectivement.

\subsubsection{Programme II~: Id\'eaux ${\mathfrak A}$ tels
que $(y) = {\mathfrak A}^{1-\sigma}\ \,  \& \ \,  {\rm N}_{k_1/k}(y)= \varepsilon$}

C'est un cas particulier du programme pr\'ec\'edent qui permet de trouver
les classes ambiges, autres que celles des id\'eaux invariants, et les
$\delta_3(x)$, $x \in \Lambda_1^1$, tr\`es importants pour la suite de l'algorithme.
On r\'esoud l'\'equation ${\rm N}_{k_1/k}(y)= \varepsilon$
qui a toujours une solution dans la mesure o\`u l'on suppose 
$\delta_3(\varepsilon) \geq 1$~; on donne aussi la structure de $\Cl_{k_1}$~:

\footnotesize
\begin{verbatim}
============================================================================
{m=67;bnf=bnfinit(y^2-m);E=component(component(component(bnf,8),5),1);
E=Mod(E,y^2-m);PK=polsubcyclo(9,3)+Mod(0,bnf.pol);
T=rnfisnorminit(bnf,PK,1);R=x^6-6*m*x^4+9*m^2*x^2-m^3;K=nfinit(R);
X=Mod(x,R);racm=m^2/(3*m*X-X^3);z=X^2/m-2;
N=rnfisnorm(T,E);nu=component(N,2);if(nu==1,Y=component(N,1);
S=component(Y,2);S0=component(S,1);S1=component(S,2);
S2=component(S,3);if(S2==0,a1=0;a0=0);if(S2!=0,s2=component(S2,2);
a0=component(s2,1);a1=component(s2,2));if(S1==0,b1=0;b0=0);
if(S1!=0,s1=component(S1,2);b0=component(s1,1);b1=component(s1,2));
if(S0==0,c1=0;c0=0);if(S0!=0,s0=component(S0,2);
c0=component(s0,1);c1=component(s0,2));
YY=(a1*racm+a0)*z^2+(b1*racm+b0)*z+c1*racm+c0;F=idealfactor(K,YY);print(Y);
print(F));H=bnrinit(bnfinit(R,1),1);print("structure=",component(H,5))}
============================================================================
\end{verbatim}
\normalsize

\smallskip
(i) Pour $m=67$ ($h=1, \delta_{\mathfrak p}(\eta_3)=1, \delta_3(\varepsilon)=2$,
$\Cl_{k_1} \simeq \Z/3 \Z$), $y$ est~:

\footnotesize
\begin{verbatim}
Mod(Mod(872/181*y+7113/181,y^2-67)*x^2+Mod(104/181*y+850/181,y^2-67)*x
+Mod(-2313/181*y-18873/181,y^2-67),x^3-3*x+Mod(1,y^2-67))
\end{verbatim}
\normalsize

L'id\'eal ${\mathfrak A}$ est l'id\'eal premier~:

\footnotesize
\begin{verbatim} 
[181,[7,0,0,0,1,0]~,1,1,[34,45,71,61,-63,-36]~]
\end{verbatim}
\normalsize

On v\'erifie que
$181 = {\rm N}_{k/\Q}(3\sqrt {67} + 28)$ pour lequel $\delta_3(3\sqrt {67} + 28)=0$.
On a $M_1^1=3$ d'ordre $3$, engendr\'e par $S_k$ et ${\mathfrak A}$~; or 
${\rm N}_{k_1/k}(M_1^1)=1$ et $\Lambda_1^1 = \langle \varepsilon, 
\eta_3, 3\sqrt {67}+28 \rangle$ qui stope l'algorithme.

\medskip
(ii) Pour $m=6559$ ($h=18, \delta_{\mathfrak p}(\eta_3)=1, \delta_3(\varepsilon)=3$,
$\Cl_{k_1}\simeq \Z/27 \Z \times \Z/3 \Z$), $y$ est~:

\footnotesize
\begin{verbatim}
Mod(Mod(2603286587676/74632321*y-210708142484324/74632321,y^2-6559)*x^2
+Mod(904557609272/74632321*y-73082414174026/74632321,y^2-6559)*x
+Mod(-7493532286573/74632321*y+606807427105320/74632321,y^2-6559),
                                                      x^3-3*x+Mod(1,y^2-6559))
\end{verbatim}
\normalsize
et l'id\'eal ${\mathfrak A}^{1-\sigma}$ est le produit~:

\footnotesize
\begin{verbatim}
[[53,[-9,0,0,0,1,0]~,1,1,[25,-11,-16,-3,-22,10]~],-2;
[53,[7,0,0,0,1,0]~,1,1,[21,-12,-17,-11,-24,10]~],2;
[163,[-49,0,0,0,1,0]~,1,1,[16,-52,38,23,-34,54]~],-2;
[163,[-41,0,0,0,1,0]~,1,1,[-35,22,-68,-21,-52,54]~],1;
[163,[-8,0,0,0,1,0]~,1,1,[-5,33,-57,-3,-78,54]~],-1;
[163,[8,0,0,0,1,0]~,1,1,[5,-33,57,-3,-78,54]~],2]
\end{verbatim}
\normalsize
pour lequel il faudra d\'eterminer les conjugaisons pour trouver $\omega$.

\smallskip
(iii) Pour $m= 3259$ ($h=1, \delta_{\mathfrak p}(\eta_3)=0, \delta_3(\varepsilon)=4$,
$\Cl_{k_1}\simeq \Z/3 \Z$), 
on trouve ${\mathfrak A}=(1)$ qui montre que $\varepsilon$ est norme de l'unit\'e de $k_1$~:

\footnotesize
\begin{verbatim}
Mod(Mod(-495452848877794109154272*y-28284239771961302173706384,y^2-3259)*x^2
+Mod(931133218427718936425952*y+53156209050568241456130896,y^2-3259)*x
+Mod(-263604189149463218625499*y-15048544190803267053864318,y^2-3259),
                                                      x^3-3*x+Mod(1,y^2-3259))
\end{verbatim}
\normalsize
et par cons\'equent, les classes ambiges sont les classes des id\'eaux inva\-riants,
donc de $S_k$ puisque $h=1$. Or $\order M_1^1=3$, et comme 
$\Lambda_1^1 = \langle \varepsilon, \eta_3 \rangle$ avec $\delta_{\mathfrak p}(\eta_3)=0$,
l'algorithme stope, ce qui est coh\'erent avec $\order \Cl_{k_1}=3$.

\smallskip
 (iv)  Pour $m=1867$  ($h=1, \delta_{\mathfrak p}(\eta_3)=1, \delta_3(\varepsilon)=5$,
$\Cl_{k_1}\simeq \Z/3 \Z$),
on obtient respectivement pour $y$ et ${\mathfrak A}$~:

\footnotesize
\begin{verbatim}
Mod(Mod(488982/107*y+21128286/107,y^2-1867)*x^2
+Mod(-778797/107*y-33650892/107,y^2-1867)*x
+Mod(442398/107*y+19115590/107,y^2-1867),x^3-3*x+Mod(1,y^2-1867))

[107,[40,0,0,0,1,0]~,1,1,[-16,36,-33,24,53,-50]~],1
\end{verbatim}
\normalsize
o\`u ${\rm N}_{k/\Q} (34086\,\sqrt m + 1472815)=-107$ et
$\delta_3(34086\,\sqrt m + 1472815)=0$.

\medskip
Certains de ces exemples donnent au premier stade $\Lambda_1^1$ des 
$\delta_3(x)$ qui stopent l'algorithme ($m=67, 3259, 1867$ pour lesquels
$\Cl_{k_1} \simeq \Z/3\Z$).

\subsection{\bf Evolution de la $i$-suite $\order (M_{i+1}^1/M_i^1)$ 
pour $k=\Q(\sqrt {6559})$}

Ce cas est particuli\`erement int\'eressant en raison du groupe de classes cyclique 
d'ordre $9$ et du r\'egulateur $3$-adique normalis\'e $\order {\mathcal R}_k={27}$, 
ce qui donne un groupe ${\mathcal T}_k$ d'ordre $3^5$, donc une grande 
vari\'et\'e de d\'ecompositions~; en outre, on a 
$\Cl_{k_1} \simeq \Z/27\Z \times \Z/3\Z$.

\subsubsection{Utilisation du programme I, \S\,\ref{67}}\label{3sub4}
Il donne les r\'esultats suivants pour un ensemble de premiers $\ell$ totalement 
d\'ecompos\'es dans $k_1/\Q$ et tels que ${\mathfrak l} = (\eta_\ell)$ dans $k$~:

\footnotesize
\begin{verbatim}
PB-NORMIQUE
m=6559,h=18,E=81*z+6560
p=3,Eta=379*z-30694
1/3  1/27
\end{verbatim}
\normalsize
indiquant que $\delta_{\mathfrak p}(\eta_3)=1$ et
$\delta_3(\varepsilon) = 3$. Noter que
$\Cl_k$ est engendr\'e par ${\mathfrak p}\mid 3$.

\smallskip
On obtient la liste suivante des $\eta_\ell$ dans la 
troisi\`eme composante de la variable ${\sf list}$~:

\footnotesize
\begin{verbatim}     
List([6559,25,
List([92179,86239,70327,68743,58321,48907,47143,40519,30781, 
28279,25237,12547,8011,1621,96461,96263,89009,88001,84653, 
82457,77003,37781,27179,16361,2267]), 

Mod(-70*z+5661,z^2-6559),Mod(65*z+5256,z^2-6559),Mod(-52*z+4203,z^2-6559),
Mod(16*z+1269,z^2-6559),Mod(-135*z-10936,z^2-6559),Mod(7*z-522,z^2-6559), 
Mod(11*z-864,z^2-6559),Mod(29*z+2340,z^2-6559),Mod(-90*z-7291,z^2-6559),
Mod(20*z+1611,z^2-6559),Mod(9*z-746,z^2-6559),Mod(-2*z-117,z^2-6559), 
Mod(-2*z-135,z^2-6559),Mod(-9*z-730,z^2-6559),Mod(61*z-4950,z^2-6559),
Mod(36*z+2899,z^2-6559),Mod(-20*z-1647,z^2-6559),Mod(-56*z-4545,z^2-6559),
Mod(-2*z-333,z^2-6559),Mod(11*z+936,z^2-6559),Mod(9*z+674,z^2-6559),
Mod(25*z-2034,z^2-6559),Mod(9*z+710,z^2-6559),Mod(-11*z-900,z^2-6559),
Mod(18*z+1457,z^2-6559)])
\end{verbatim}
\normalsize

\subsubsection{D\'ecomposition de $(y)$ en id\'eaux, pour $k=\Q(\sqrt {6559})$}
Le Programme I du \S\,\ref{67} donne la solution $y\in k_1^\times$ telle que
${\rm N}_{k_1/ k}(y) = \eta_\ell$ et la d\'ecompo\-sition en id\'eaux de $(y)$~;
le nombre $y \in k_1^\times$ (identifi\'e par ${\sf Y}$ dans le programme) est 
d\'ecrit par PARI en termes polynomiaux (variables $x$ et $y$, modulo 
$y^2-m$ pour $k/\Q$ et modulo $x^3-3\,x+1$ pour l'extension $k_1/k$).

\smallskip
Par exemple, pour $\ell = 86239$ et $\eta_\ell=65\cdot \sqrt m + 5256$ 
de norme $\ell$, on a~:

\footnotesize
\begin{verbatim}
Y=Mod(Mod(103603429803986698500793761812835866687
                     8738304822327197272934756399840529/418195493*y
-8390598661945059443075872989882636906451249
                     9733687980568899041217527878770/418195493,y^2-6559)*x^2 
+Mod(-194710756949834302518150983617741998570
                     9413036014952120204634371585899310/418195493*y 
+15769167293211778079551042284522907830683899
                     9479469060631851001865336037874/418195493,y^2-6559)*x 
+Mod(551262335748355458007199134755867939
                     764504856477125221099310399055725544/418195493*y 
-4464544296904031158210117579530451263319333
                  4530869826508158991263404771360/418195493,y^2-6559),
                                                      x^3-3*x+Mod(1,y^2-6559))
\end{verbatim}
\normalsize

On aura en g\'en\'eral
$(y) = {\mathfrak L}\cdot {\mathfrak Q}^{\omega \cdot (1-\sigma)} , \ \,
\omega \in \Z[{\rm Gal}(k_1/\Q)]$,
o\`u ${\rm Gal}(k_1/\Q)$ est engendr\'e par $\sigma$ d'ordre $3$ 
et $\tau$ d'ordre $2$. 
Le programme donne de fait le produit $\Omega := \omega \cdot (1-\sigma)$
effectu\'e. 

\smallskip
On obtient des r\'esultats montrant le caract\`ere al\'eatoire 
de $\Omega$, dont les coefficients sont dans l'intervalle $[-9, +9]$
et o\`u l'id\'eal ${\mathfrak A}={\mathfrak Q}^\Omega$ est tel que 
${\mathfrak Q} \mid 53$ o\`u ${\rm N}_{k_1/k}({\mathfrak Q}) = 
{\mathfrak q}$ dont la classe est d'ordre $9$ (on rappelle que
les $\ell \equiv \pm 1 \pmod{81}$ sont class\'es selon les deux 
congruences, par ordres d\'ecroissants, et que les 
${\mathfrak l}$-unit\'es $\eta_\ell$ sont donn\'ees au \S\,\ref{3sub4}).
On observe en particulier 2 cas d'id\'eaux 
${\mathfrak Q}^\omega$ triviaux (donc lorsque l'id\'eal
${\mathfrak L}$ de $k_1$ est principal \'egal \`a $y$)~:

\footnotesize
\begin{verbatim}
1621
Mat([[1621,[-62,0,0,0,1,0]~,1,1,[119,-637,235,621,776,762]~],1])

27179
Mat([[27179,[-5996,0,0,0,1,0]~,1,1,[10104,8864,-2451,5717,-5400,-3876]~],1])
\end{verbatim}
\normalsize

Mais trouver $\omega$ n\'ecessite d'identifier
num\'eriquement les conjugu\'es de ${\mathfrak Q}$.

\subsubsection{D\'etermination de ${\rm Gal}(k_1/\Q)$ pour $k=\Q(\sqrt{6559})$}
On obtient, relative\-ment au polyn\^ome $R=x^6 - 39354\,x^4 + 387184329\,x^2 - 282171334879$,
et en utilisant ${\sf nfgaloisconj}$~:
\begin{equation*}
\begin{aligned}
\tau \hbox{\ \ d\'efini par\ \  }  x & \mapsto -x, \\
\sigma \hbox{\ \ d\'efini par\ \  }  x & \mapsto -1/43020481\,x^5 + 5/6559\,x^3 - 6\,x, \\
\tau \cdot \sigma^2 \hbox{\ \ d\'efini par\ \ } x & \mapsto -1/43020481\,x^5 + 5/6559\,x^3 - 5\,x, \\
\sigma^2 \hbox{\ \ d\'efini par\ \  } x & \mapsto\ \  1/43020481\,x^5 - 5/6559\,x^3 + 5\,x, \\
\tau \cdot \sigma \hbox{\ \ d\'efini par\ \  } x & \mapsto\ \  1/43020481\,x^5 - 5/6559\,x^3 + 6\,x.
\end{aligned}
\end{equation*}

On a alors calcul\'e, dans l'ordre $1, \tau, \sigma, \tau \cdot \sigma^2, 
\sigma^2, \tau \cdot \sigma$,
les conjugu\'es corres\-pondants de $y$ et sa d\'ecomposition en id\'eaux,
ce qui permet d'identifier les conjugu\'es de l'id\'eal ${\mathfrak L}$ et 
de trouver $\Omega$~; on se base une fois pour toutes sur le $y$ 
obtenu pour $\ell=28279$~:

\footnotesize
\begin{verbatim}
Id(y):
[[53,[-7,0,0,0,1,0]~,1,1,[-21,12,17,-11,-24,10]~],3;      Q1
[53,[-2,0,0,0,1,0]~,1,1,[-22,15,20,-9,23,10]~],4;         Q2
[53,[2,0,0,0,1,0]~,1,1,[22,-15,-20,-9,23,10]~],-2;    tau(Q2)
[53,[7,0,0,0,1,0]~,1,1,[21,-12,-17,-11,-24,10]~],2;   tau(Q1)
[53,[9,0,0,0,1,0]~,1,1,[-25,11,16,-3,-22,10]~],-7;        Q3
[28279,[3506,0,0,0,1,0]~,1,1,[356,7859,1221,-1277,3345,8122]~],1]

tau(y):
[[53,[-9,0,0,0,1,0]~,1,1,[25,-11,-16,-3,-22,10]~],-7;  tau(Q3)
[53,[-7,0,0,0,1,0]~,1,1,[-21,12,17,-11,-24,10]~],2;        Q1
[53,[-2,0,0,0,1,0]~,1,1,[-22,15,20,-9,23,10]~],-2;         Q2
[53,[2,0,0,0,1,0]~,1,1,[22,-15,-20,-9,23,10]~],4;      tau(Q2)
[53,[7,0,0,0,1,0]~,1,1,[21,-12,-17,-11,-24,10]~],3;    tau(Q1)
[28279,[-3506,0,0,0,1,0]~,1,1,[-356,-7859,-1221,-1277,3345,8122]~],1]

sigma(y):
[[53,[-9,0,0,0,1,0]~,1,1,[25,-11,-16,-3,-22,10]~],-2;  tau(Q3)
[53,[-7,0,0,0,1,0]~,1,1,[-21,12,17,-11,-24,10]~],-7;       Q1
[53,[-2,0,0,0,1,0]~,1,1,[-22,15,20,-9,23,10]~],3;          Q2
[53,[2,0,0,0,1,0]~,1,1,[22,-15,-20,-9,23,10]~],2;      tau(Q2)
[53,[9,0,0,0,1,0]~,1,1,[-25,11,16,-3,-22,10]~],4;          Q3
[28279,[-7370,0,0,0,1,0]~,1,1,[5417,-865,-7503,12899,3500,8122]~],1]

tau.sigma^2(y):
[[53,[-9,0,0,0,1,0]~,1,1,[25,-11,-16,-3,-22,10]~],3;  tau(Q3)
[53,[-7,0,0,0,1,0]~,1,1,[-21,12,17,-11,-24,10]~],-2;      Q1
[53,[2,0,0,0,1,0]~,1,1,[22,-15,-20,-9,23,10]~],-7;    tau(Q2)
[53,[7,0,0,0,1,0]~,1,1,[21,-12,-17,-11,-24,10]~],4;   tau(Q1)
[53,[9,0,0,0,1,0]~,1,1,[-25,11,16,-3,-22,10]~],2;         Q3
[28279,[-3864,0,0,0,1,0]~,1,1,[14138,-12920,-6282,12744,-10758,8122]~],1]

sigma^2(y):
[[53,[-9,0,0,0,1,0]~,1,1,[25,-11,-16,-3,-22,10]~],2;  tau(Q3)
[53,[-7,0,0,0,1,0]~,1,1,[-21,12,17,-11,-24,10]~],4;       Q1
[53,[-2,0,0,0,1,0]~,1,1,[-22,15,20,-9,23,10]~],-7;        Q2
[53,[7,0,0,0,1,0]~,1,1,[21,-12,-17,-11,-24,10]~],-2;  tau(Q1)
[53,[9,0,0,0,1,0]~,1,1,[-25,11,16,-3,-22,10]~],3;         Q3
[28279,[3864,0,0,0,1,0]~,1,1,[-14138,12920,6282,12744,-10758,8122]~],1]

tau.sigma(y):
[[53,[-9,0,0,0,1,0]~,1,1,[25,-11,-16,-3,-22,10]~],4;  tau(Q3)
[53,[-2,0,0,0,1,0]~,1,1,[-22,15,20,-9,23,10]~],2;         Q2
[53,[2,0,0,0,1,0]~,1,1,[22,-15,-20,-9,23,10]~],3;     tau(Q2)
[53,[7,0,0,0,1,0]~,1,1,[21,-12,-17,-11,-24,10]~],-7;  tau(Q1)
[53,[9,0,0,0,1,0]~,1,1,[-25,11,16,-3,-22,10]~],-2;        Q3
[28279,[7370,0,0,0,1,0]~,1,1,[-5417,865,7503,12899,3500,8122]~],1]
\end{verbatim}
\normalsize

Ceci identifie les relations de conjugaison \`a partir de ${\mathfrak Q}_1$~:
$${\mathfrak Q}_1,\ \ \ \ {\mathfrak Q}_1^\sigma = {\mathfrak Q}_2, \ \ \ \ 
{\mathfrak Q}_1^{\sigma^2} = {\mathfrak Q}_3. $$
D'o\`u, pour le cas de $\ell = 28279$,
$\Omega = 3+2\tau + (4-2\tau) \sigma -7 \sigma^2$, puis
$\omega = 7 \sigma +2\tau +3$ et $\omega(1)= 10+2\tau$.
Mais comme ${\mathfrak q}^{1+\tau} = (53)$, on obtiendra
${\rm N}_{k_1/k}({\mathfrak A} )= (53)^2 \cdot {\mathfrak q}^8$~;
or la classe de ${\mathfrak q}$ est d'ordre $9$, avec~: 
$${\mathfrak q}^9=(416167 \sqrt {6559} + 66601422) \ \ \& \ \ 
\delta_3(416167\sqrt {6559}  + 66601422)=1. $$

Pour $\ell = 82457$, on obtient~:

\footnotesize
\begin{verbatim}
l=82457:
[[53,[-7,0,0,0,1,0]~,1,1,[-21,12,17,-11,-24,10]~],2;   Q1
[53,[9,0,0,0,1,0]~,1,1,[-25,11,16,-3,-22,10]~],-2;     Q3
[82457,[-11462,0,0,0,1,0]~,1,1,[-12743,-34335,-22650,2405,35275,-22073]~],1]
\end{verbatim}
\normalsize
qui conduit \`a $\omega(1) = 4$.

\medskip
Pour $\ell = 2267$, il vient~:

\footnotesize
\begin{verbatim}
l=2267:
[[53,[-7,0,0,0,1,0]~,1,1,[-21,12,17,-11,-24,10]~],3;    Q1
[53,[-2,0,0,0,1,0]~,1,1,[-22,15,20,-9,23,10]~],1;       Q2
[53,[2,0, 0,0,1,0]~,1,1,[22,-15,-20,-9,23,10]~],-1; tau(Q2)
[53,[7,0,0,0,1,0]~,1,1,[21,-12,-17,-11,-24,10]~],1; tau(Q1)
[53,[9,0,0,0,1, 0]~,1,1,[-25,11,16,-3,-22,10]~],-4;     Q3
[2267,[855,0,0,0,1,0]~,1,1,[433,150,-991,-60,-759,-378]~],1]
\end{verbatim}
\normalsize
d'o\`u $\omega = 4 \sigma +3+\tau$, $\omega(1)=7+\tau$, et
${\rm N}_{k_1/k}({\mathfrak A}) = (53) \cdot {\mathfrak q}^6$, 
qui donne une classe d'ordre $3$.

\smallskip
Pour $\ell = 8011$ on obtient~:

\footnotesize
\begin{verbatim}
l=8011:
[[53,[-7,0,0,0,1,0]~,1,1,[-21,12,17,-11,-24,10]~],4;    Q1
[53,[-2,0,0,0,1,0]~,1,1,[-22,15,20,-9,23,10]~],3;       Q2
[53,[2,0, 0,0,1,0]~,1,1,[22,-15,-20,-9,23,10]~],-2; tau(Q2)
[53,[7,0,0,0,1,0]~,1,1,[21,-12,-17,-11,-24,10]~],2; tau(Q1)
[53,[9,0,0,0,1, 0]~,1,1,[-25,11,16,-3,-22,10]~],-7;     Q3
[8011,[-2063,0,0,0,1,0]~,1,1,[-2011,-1995,2900,979,-3590,1411]~],1]
\end{verbatim}
\normalsize
pour lequel
$\omega = 7 \sigma +4+2\tau$, $\omega(1)=11+2\tau = 9+2(1+\tau)$ qui conduit \`a 
l'id\'eal principal ${\rm N}_{k_1/k}({\mathfrak A}) = (53)^2 \cdot {\mathfrak q}^9$.

\smallskip
Par cons\'equent, tous les cas int\'eressants de $\omega$ sont obtenus, 
ce qui sugg\`ere la r\'epartition uniforme de la composante 
$\cl_k({\mathfrak t})$ dans $\Cl_k$ relativement \`a la d\'ecomposition 
de ${\rm N}_{k_1/k}({\mathfrak A})$ (Th\'eor\`eme \ref{delta}).

\subsubsection{Evolution du facteur normique pour $\Q(\sqrt{6559})$}\label{3sub5}
Le $3$-groupe des classes du corps $k^*=\Q(\sqrt{-3 \cdot 6559})$
est $\Cl_{k^*} \simeq (\Z/3\,\Z)^2$, ce qui fait que deux
pseudo-unit\'es $3$-primaires ind\'ependantes dans $k$ sont n\'ecessaires~;
ceci explique que, outre l'unit\'e fondamentale, tout $a$ \'etranger \`a $3$, 
tel que $(a)={\mathfrak a}^3$, v\'erifie n\'ecessairement
$\delta_3(a) \geq 1$ (cas analogue au cas de $m=10942$ du \S\,\ref{2sub4}).

\smallskip
Consid\'erons le groupe des classes ambiges $M_1^1$ d'ordre $27$~; 
il est engendr\'e par les classes de ${\mathfrak P} \mid 3$ dans $k_1$
et de ${\mathfrak A}={\mathfrak Q}_{53}^{-2} \cdot 
{\mathfrak Q}_{163}^{-2+(\tau-2)\cdot \sigma}$ provenant de la 
r\'esolution de $\varepsilon = {\rm N}_{k_1/k}(y)$ (Programme II), apr\`es 
identification des conjugu\'es et utilisation de ${\mathfrak A}^{1-\sigma}$ 
donn\'e par~:

\footnotesize
\begin{verbatim}
[[53,[-9,0,0,0,1,0]~,1,1,[25,-11,-16,-3,-22,10]~],-2;
[53,[7,0,0,0,1,0]~,1,1,[21,-12,-17,-11,-24,10]~],2;
[163,[-49,0,0,0,1,0]~,1,1,[16,-52,38,23,-34,54]~],-2;
[163,[-41,0,0,0,1,0]~,1,1,[-35,22,-68,-21,-52,54]~],1;
[163,[-8,0,0,0,1,0]~,1,1,[-5,33,-57,-3,-78,54]~],-1;
[163,[8,0,0,0,1,0]~,1,1,[5,-33,57,-3,-78,54]~],2]
\end{verbatim}
\normalsize

On remarque que $(\sqrt {6559}+80) = {\mathfrak p} \cdot {\mathfrak q}_{53}$~;
par cons\'equent ${\mathfrak q}_{53}$ et ${\mathfrak q}_{163}$ sont \'equivalents
 \`a une puissance de ${\mathfrak p}$. 
Donc ${\rm N}_{k_1/k}(M_1^1)$ est engendr\'e par les classes dans $k$ de
${\mathfrak p} \mid 3$ (car $\cl_k({\mathfrak p})$ engendre
 $\Cl_k$), de ${\mathfrak q}_{53}$ et ${\mathfrak q}_{163}$ (rajout\'es
par commodit\'e). On trouve avec PARI un $y \in k_1$ qui
a pour norme relative le nombre entier $x \in k$ suivant~:

\footnotesize
\begin{verbatim}
Mod(6832355788476479176909088393511957025*y 
                      -131997425842264293218558754198040661024,y^2-6559)
\end{verbatim}
\normalsize
de norme $-3^3 \cdot 53^{21} \cdot 163^{18}$, dont la d\'ecomposition 
en id\'eaux est~:

\footnotesize
\begin{verbatim}
[[3,[1,1]~,1,1,[-1,1]~]3]         P3
[[53,[26,1]~,1,1,[-26,1]~]21]     Q53
[[163,[56,1]~,1,1,[-56,1]~]18]    Q163
\end{verbatim}
\normalsize
et qui fournit une \'el\'ement de $\Lambda_1^1$ norme dans $k_1/k$
(le symbole de Hasse de $x$, non \'etranger \`a $3$, est de calcul plus 
complexe~; $y$ pourrait permettre le pas suivant de l'algorithme).
Donc $\order (M_2^1/M_1^1)=3$ car le facteur classes a \'et\'e 
trivialis\'e puisque ${\rm N}_{k_1/k}(M_1^1)=\Cl_k$. D'o\`u $\order M_2^1=81$, 
et comme PARI donne $\Cl_{k_1} \simeq \Z/27\Z \times \Z/3\Z$,
on a fin de l'algorithme (normalement on devrait d\'eterminer 
$M_3^1/M_2^1$ \`a partir du calcul de $\Lambda_2^1$ pour constater la fin).

\subsubsection{Remarques sur l'algorithme pour $\Q(\sqrt {6559})$}
(i) On v\'erifie que pour 
$k=\Q(\sqrt {6559})$ le $3$-groupe des classes de $k_2$
est isomorphe \`a $\Z/27\,\Z \times \Z/9\,\Z$. Le temps de calcul devient
important et il semble illusoire d'effectuer les calculs pr\'ec\'edents
dans $k_2/k$.

\smallskip
 (ii) On rappelle, d'apr\`es \cite[Th\'eor\`eme 4.7]{Gra3}, que l'exposant 
$3^e$ de $U_k^*/\overline  E_k$ indique l'\'etage (ici \'egal \`a $e=3$) 
\`a partir duquel le nombre de classes ambiges dans $k_n/k$ 
est \'egal \`a $\order {\mathcal T}_k = 3^5$ pour tout $n \geq e$. 
Nous ignorons si la stabilisation s'effectue \`a l'\'etage $3$, mais il 
est normal que le $3$-groupe des classes croisse au moins 
jusqu'\`a l'ordre $3^5$. 

\smallskip
Au-del\`a de cette borne, on aura 
$\order M_2^n = 3^{e_1^n} \cdot \order M_1^n = 3^{e_1^n} \cdot 3^5$,
o\`u $e_1^n \in \{0, 1\}$ ne d\'epend que du facteur normique selon les
modalit\'es abord\'ees au niveau $n=1$.

\smallskip
 (iii) Comme pour tous les $h \geq 0$, les normes ${\rm N}_{k_{n+h}/k_n} : 
\Cl_{k_{n+h}} \too \Cl_{k_n}$ sont surjectives, $\order \Cl_{k_n}$
est fonction croissante de $n$, mais avec la contrainte
$\order M_1^n = \order \Cl_{k_n}^{{\rm Gal}(k_n/k)} = 3^5$ pour tout $n \geq 3$
et le fait, rappel\'e \`a la fin de la Section \ref{sec1}, que la
$i$-suite des $\order (M_{i+1}^n/M_i^n) =: 3^{c_i^n+ \rho_i^n}$
est d\'ecroissante \`a partir de $\order M_1^n = 3^5$,
stationnaire, de limite un diviseur de $\order {\mathcal T}_k$.

\smallskip
Pour $m=6559$ o\`u ${\rm N}_{k_n/k}(S_{k_n})=S_k$
engendre $\Cl_k$, le facteur classes est tou\-jours trivialis\'e et tout d\'epend 
de la $i$-suite d\'ecroissante $3^{\rho_i^n} \!\mid\! (U_k^* : \overline  E_k) = 3^3$.

\section{Descente galoisienne de ${\mathcal T}_k$ via ${\rm Gal}(F/k)$}

Bien que la descente galoisienne de $H_k^{\rm pr}/k_\infty$, en $F/k$,
ne soit pas n\'ecessaire au plan th\'eorique, donnons d'abord un exemple 
num\'erique montrant le caract\`ere \go fini explicite\gf
des conditions de r\'epartition des symboles d'Artin 
$\Big( \Frac{F/k} {{\rm N}_{k_n/k}(\mathfrak A)} \Big)$ des normes
${\rm N}_{k_n/k}(\mathfrak A)$ dans la tour cyclotomique. 
Ensuite nous montrerons que cette r\'epartition des symboles d'Artin
ne d\'epend pas du choix de $F$ qui, en un sens, d\'efinit un 
\go corps gouvernant\gf pour la conjecture de Greenberg.

\subsection{\bf L'extension $F/k$ pour $k=\Q(\sqrt{1714})$, $p=3$}\label{3sub6}
Pour $m= 1714$ et $p=3$, le programme du \S\,\ref{prat} permet de
montrer que ${\mathcal T}_k \simeq \Z/3\Z \times \Z/3\Z$ avec $\Cl_k \simeq \Z/3\Z$~; 
le corps $k^*=\Q(\sqrt{-3\cdot m})$
a un $3$-groupe des classes isomorphe \`a $\Z/3\Z \times \Z/3\Z$ et  
il existe $\alpha, \beta \in k^*$, cubes d'id\'eaux non
principaux, tels que $\alpha$ soit par exemple $3$-primaire pour
engendrer l'extension de Kummer non ramifi\'ee 
$H_k(\mu_3^{}) = k(\mu_3^{})(\sqrt[3]{\alpha})$,
$\beta$ donnant une extension cyclique de degr\'e $3$ ramifi\'ee en $3$. 

\smallskip
Rappelons que si $(\gamma) = {\mathfrak c}^3$ dans $k^*$, 
avec ${\rm N}_{k^*/\Q}(\gamma) = N_\gamma^3$ et 
${\rm Tr}_{k^*/\Q}(\gamma) = T_\gamma$, alors l'extension cyclique de 
degr\'e $3$ de $k$ qui lui correspond est donn\'ee par le polyn\^ome~:
$$P_{\gamma} = x^3 - 3\,N_\gamma \!\cdot\! x - T_\gamma. $$
On obtient les donn\'ees suivantes~:
\begin{equation*}
\begin{aligned}
\alpha & = 2593 + 15 \cdot \sqrt{-5142},   &\hbox{avec $N_\alpha=199$}, \\
\beta & = 157 + \sqrt{-5142},   & \hbox{avec $N_\beta=31$},\ \,
\end{aligned}
\end{equation*}
$P_\alpha=x^3 - 3 \cdot 199 \, x - 2 \cdot  2593$ 
(non ramification sur $k$), $P_\beta=x^3 - 3 \cdot 31\, x - 2 \cdot 157$ 
(ramification en 3)~; en utilisant~:
$${\sf polcompositum(x^3-3*199*x-2*2593, x^3-3*31*x-2*157)}, $$
on obtient que $F$ est engendr\'ee sur $k$ par une racine de~:

\footnotesize
\begin{verbatim}
x^9-2070*x^7+14616*x^6+1261737*x^5-17516520*x^4-136713960*x^3
                                      +3712697856*x^2-22102948224*x+40749585408
\end{verbatim}
\normalsize

\noindent
pour lequel on v\'erifie que $D_{F/\Q}=D_k^9 \cdot 3^{27}$.
Ainsi le symbole d'Artin d'un id\'eal premier ${\mathfrak L}$ 
(par exemple totalement d\'ecompos\'e) de $k_n$,
obtenu au cours de l'algorithme, se lit sur la d\'ecomposition, dans $F/k$,
de l'id\'eal ${\mathfrak l}$ de $k$ au-dessous de ${\mathfrak L}$. Plus
g\'en\'eralement les symboles des ${\rm N}_{k_n/k}(\mathfrak A)$
en r\'esultent.

\subsection{\bf Invariance par rapport au choix de $F/k$}
L'extension $F/k$ n'est pas unique mais on a le r\'esultat suivant
qui conforte ces questions d'ordre heuristique et num\'erique~:

\begin{theorem} L'\'etude statistique des symboles d'Artin 
$\big( \frac{F/k} {{\rm N}_{k_n/k}({\mathfrak A})} \big)$, o\`u les 
id\'e\-aux ${\mathfrak A}$ sont obtenus dans l'algorithme de d\'evissage
dans $k_n$, est intrins\`eque pour tout $n$ assez grand et ne d\'epend 
pas du choix de $F$.
\end{theorem}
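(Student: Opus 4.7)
Le plan est de ramener l'\'enonc\'e \`a une propri\'et\'e intrins\`eque du groupe fini $\mathcal{T}_k$, gr\^ace au Th\'eor\`eme \ref{delta} combin\'e \`a la finitude de l'exposant de $\mathcal{T}_k$. Chaque choix de section $F$ avec $H_k^{\rm pr} = F \cdot k_\infty$ (compos\'e direct) correspond \`a un scindage $\mathcal{A}_k = \mathcal{T}_k \oplus \Gamma_\infty$, $\Gamma_\infty = {\rm Gal}(H_k^{\rm pr}/F)$, et la restriction induit un isomorphisme canonique $\mathcal{T}_k \xrightarrow{\sim} {\rm Gal}(F/k)$ (puisque $\mathcal{T}_k \cap \Gamma_\infty = 1$ et $\mathcal{T}_k + \Gamma_\infty = \mathcal{A}_k$). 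Je commencerais par utiliser cet isomorphisme pour identifier chaque symbole $\Big(\Frac{F/k}{{\rm N}_{k_n/k}({\mathfrak A})}\Big)$ \`a un \'el\'ement $\tau_F({\mathfrak A}) \in \mathcal{T}_k$; il s'agira alors de montrer que $\tau_F({\mathfrak A}) = \tau_{F'}({\mathfrak A})$ pour tout couple $F, F'$ et tout $n$ assez grand.

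L'\'etape cruciale s'appuie sur le Th\'eor\`eme \ref{delta}~: on \'ecrit $\sigma_n := \Big(\Frac{H_k^{\rm pr}/k}{{\rm N}_{k_n/k}({\mathfrak A})}\Big) = p^n \sigma_{\mathfrak a} + \sigma_{\mathfrak t}$, avec $\sigma_{\mathfrak a} \in \Gamma_\infty$ et $\sigma_{\mathfrak t} \in \mathcal{T}_k$. Comme $\sigma_{\mathfrak a}|_F = 1$ par d\'efinition de $\Gamma_\infty$, il vient $\Big(\Frac{F/k}{{\rm N}_{k_n/k}({\mathfrak A})}\Big) = \sigma_{\mathfrak t}|_F$, et donc $\tau_F({\mathfrak A})$ s'identifie via l'isomorphisme canonique \`a $\sigma_{\mathfrak t}$. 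La question se r\'eduit ainsi \`a montrer que la composante de torsion $\sigma_{\mathfrak t}$ de $\sigma_n$, a priori d\'ependante du scindage choisi, est en fait intrins\`eque d\`es que $n$ est assez grand. Soit $p^N$ l'exposant de $\mathcal{T}_k$~: pour $n \geq N$, l'intersection $p^n \mathcal{A}_k \cap \mathcal{T}_k = p^n \mathcal{T}_k$ est triviale, la d\'ecomposition $\sigma_n = p^n \alpha + \tau$ avec $\tau \in \mathcal{T}_k$ est donc unique, et $\tau = \sigma_{\mathfrak t}$ est intrins\`eque. Vu autrement, deux scindages diff\`erent par un homomorphisme $\phi : \Gamma_\infty \to \mathcal{T}_k$, et le d\'ecalage de la composante de torsion entre $F$ et $F'$ vaut $\phi(p^n \sigma_{\mathfrak a}) = p^n \phi(\sigma_{\mathfrak a})$, nul d\`es que $n \geq N$.

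L'obstacle principal est essentiellement conceptuel, plut\^ot que calculatoire~: il faut bien distinguer le sous-groupe intrins\`eque $\mathcal{T}_k \subset \mathcal{A}_k$ de ses rel\`evements comme facteur direct associ\'es aux divers choix de $F$. Une fois cette nuance clarifi\'ee et l'\'egalit\'e $p^n \mathcal{T}_k = 1$ pour $n \geq N$ mise en avant, la conclusion est imm\'ediate et la borne explicite \go $n$ assez grand \gf\ s'\'ecrit $n \geq N$, o\`u $p^N$ est l'exposant de $\mathcal{T}_k$. Ceci justifie que les statistiques de l'Hypoth\`ese (H) portent bien sur un espace probabilis\'e canonique, ne d\'ependant que du corps de base $k$, renfor\c cant ainsi la plausibilit\'e heuristique de la d\'emarche.
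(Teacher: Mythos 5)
Your proof is correct, and it takes a genuinely different route from the paper's. The paper argues at the ideal level: it compares the two decompositions ${\rm N}_{K/k}({\mathfrak A}) = {\mathfrak a}^{p^n} {\mathfrak t}\,(x_\infty) = {\mathfrak a}'^{p^n}{\mathfrak t}'\,(x'_\infty)$, deduces that ${\mathfrak t}'/{\mathfrak t}=(z)$ with $\iota(z)$ arbitrarily close to $1$, and then uses the finite order of $\mathfrak t,\mathfrak t'$ modulo ${\mathcal P}_{k,\infty}$ together with Leopoldt's conjecture (to extract $p^{e'}$-th roots in $\overline E_k$) to conclude that $(z)\in{\mathcal P}_{k,\infty}$, hence that $\mathfrak t$ and $\mathfrak t'$ have the same Artin symbol. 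You instead argue purely at the level of the $\Z_p$-module ${\mathcal A}_k\simeq {\mathcal T}_k\oplus\Z_p$: the Artin symbol $\sigma_n$ of ${\rm N}_{k_n/k}({\mathfrak A})$ lies in $\Gamma_\infty^{p^n}\oplus{\mathcal T}_k\subset p^n{\mathcal A}_k+{\mathcal T}_k$, and since $p^n{\mathcal A}_k\cap{\mathcal T}_k = p^n{\mathcal T}_k$ is trivial once $p^n$ kills ${\mathcal T}_k$, the torsion component of $\sigma_n$ is uniquely determined independently of which complement $\Gamma_\infty$ (i.e., which $F$) was chosen. Your argument is cleaner and gives an explicit threshold $n\geq N$ with $p^N=\exp({\mathcal T}_k)$; in fact it essentially turns the content of the Remark that follows the theorem in the paper (canonicity of $F_{e'}$ fixed by ${\mathcal A}_k^{p^{e'}}$) into the proof itself. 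What the paper's approach buys is that it stays close to the arithmetic objects (ideals, infinitesimals, units) used throughout the rest of the text, whereas your version isolates the purely structural content of the statement.
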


\begin{proof}
Soit $F'/k$ une autre solution~; alors en se r\'ef\'erant aux 
expressions du Th\'eor\`eme \ref{delta}, il vient, avec des 
notations \'evidentes pour $F$ et $F'$,
${\rm N}_{K/k}({\mathfrak A})= {\mathfrak a}^{p^n} \cdot
{\mathfrak t} \cdot (x_\infty) = {\mathfrak a}'{}^{p^n} \cdot
{\mathfrak t}' \cdot (x'_\infty)$,
ce qui conduit, d\`es que $n$ est assez grand, \`a
${\mathfrak t}' \cdot {\mathfrak t}^{-1} = (z)$, o\`u 
l'image de $z$ dans $U_k$
est arbitrairement proche de $1$. Comme ${\mathfrak t}$
et ${\mathfrak t}'$ sont d'ordre fini modulo ${\mathcal P}_{k,\infty}$,
on obtient, pour $e' \geq 0$ convenable,
${\mathfrak t}'{}^{p^{e'}} \cdot {\mathfrak t}^{-p^{e'}} 
= (z^{p^{e'}})=(t_\infty) \in {\mathcal P}_{k,\infty}$.
Donc $z^{p^{e'}} = t_\infty \cdot \varepsilon$, 
$\varepsilon \in E_k \otimes \Z_p$
d'image arbitrairement proche de $1$ dans $U_k$, 
donc de la forme $\varepsilon'{}^{p^{e'}}$,
$\varepsilon' \in E_k \otimes \Z_p$ (conjecture de Leopoldt), 
ce qui fait que $z' := z \cdot \varepsilon'{}^{-1}$
est tel que $z'{}^{p^{e'}} = t_\infty$. L'image de
$z'$ dans $U_k$ est dans ${\rm tor}_{\Z_p}(U_k)=1$ et $z'$ est infinit\'esimal
(cf. \S\,\ref{sub1}). D'o\`u $(z) = (z') \in {\mathcal P}_{k,\infty}$
et ${\mathfrak t}' \cdot {\mathfrak t}^{-1} \in {\mathcal P}_{k,\infty}$. $\square$
\end{proof}

\begin{remark} {\rm
Soit $p^{e'}$, ${e'} \geq e \geq 0$, l'exposant de ${\mathcal T}_k$
(o\`u l'on rappelle que $p^e$ est l'exposant de $U_k^*/\overline  E_k$), 
et pour tout $n \geq 0$, soit $F_n =k_nF= KF$ le sous-corps de $H_k^{\rm pr}$ 
fix\'e par $\Gamma_\infty^{p^n}$ (se reporter au sch\'ema du \S\,\ref{sub1}).
Alors, pour tout $n \geq {e'}$, la restriction ${\mathcal T}_k \too 
{\rm Gal}(KF/ K)$ est un isomorphisme de $g$-modules. En effet,
${\mathcal A}_k$ est normal dans ${\rm Gal}(H_k^{\rm pr}/\Q)$, et 
${\mathcal A}_k^{p^n} = \Gamma_\infty^{p^n}$ est normal et
fixe $KF$ qui est galoisien sur $\Q$.
Autremant dit, $g$ et ${\rm Gal}(K/\Q)$ op\`erent par conjugaison 
sur ${\mathcal T}_k$ de fa\c con coh\'erente. 

\smallskip
Ainsi, si $F$ (ou $\Gamma_\infty$) n'est pas unique, $F_{e'}$ est canonique 
comme sous-corps de $H_k^{\rm pr}$ fixe par ${\mathcal A}_k^{p^{e'}}$,
ce qui rend canonique, pour tout $n \geq e'$, la d\'ecomposition en 
id\'eaux ${\rm N}_{K/k}({\mathfrak A}) = {\mathfrak a}^{p^n} \!\cdot\! 
{\mathfrak t} \cdot (x_\infty)$, ${\mathfrak a}, {\mathfrak t} \in {\mathcal J}_k$,
et pr\'ecise le th\'eor\`eme pr\'ec\'edent.}
\end{remark}

\section{Conclusion}\label{concl}

Pour $k$ et $p$ fix\'es, les exp\'erimentations sugg\`erent que, pour tout 
$n \gg 0$ fix\'e, les probabilit\'es de trivialit\'e de chacun des deux facteurs 
de la $i$-suite $\order (M_{i+1}^n/M_i^n)$, pour $i$ croissant, 
{\it tendent rapidement vers $1$, ind\'ependamment de $n$}, 
selon des lois binomiales sur les pas successifs, $1 \leq i \leq m_n$.
Une estimation pr\'ecise est difficile en raison de la pr\'esence
de plusieurs param\`etres de type corps de classes comme certains 
exemples l'ont montr\'e aux \S\S\,\ref{2sub3}, \ref{2sub4}.

\smallskip 
Ceci rend cr\'edible l'hypoth\`ese et les heuristiques que nous avions donn\'ees dans
\cite[Hypoth\`ese 7.9, Heuristiques 7.5, 7.6]{Gra3} dont nous rappelons l'essentiel
pour un corps $k$ de degr\'e $d$, totalement r\'eel et $p$-d\'ecompos\'e~:

\medskip
 (i) {\it Soit $c \in \Cl_k$~; la probabilit\'e que, pour un id\'eal 
${\mathfrak A}$ de $k_n$ \'etranger \`a~$p$, la $p$-classe de 
${\rm N}_{k_n/k}({\mathfrak A})$ soit \'egale \`a $c$, est} $\Frac{1}{\order \Cl_k}$.
{\it La probabilit\'e que, pour $x \in \Lambda_i^n$, on ait 
$\delta_{\mathfrak p}(x) \geq r$, pour tout ${\mathfrak p} \in S_k$, est}~:
$\Frac{1}{p^{\,r \, (d-1)}}$~; {\it d'o\`u celle de}
$x \in  {\rm N}_{k_n/k}(k_n^\times)$.

(ii) {\it  Il existe $i_0\gg0$, ind\'ependant de $n$, tel que} l'on ait
${\rm N}_{k_n/k}(M^n_{i_0}) = \Cl_k$ et 
$\Frac{p^{n \cdot (d -1)}}{(\Lambda_{i_0}^n : \Lambda_{i_0}^n 
\cap {\rm N}_{k_n/k}(k_n^\times))} = 1$, pour tout $n \gg 0$.

\medskip
Ce que l'on peut r\'esumer de la fa\c con approximative suivante 
(d'autant plus que les probabilit\'es pr\'ec\'edentes sont 
conjecturalement des majorants)~:

\smallskip
{\it Pour $\lambda$ ou $\mu$ non nuls, la probabilit\'e de
$\order \Cl_{k_n} = p^{\lambda \cdot n + \mu \cdot p^n+\nu }$ est au plus en}
$\hbox{$\Frac{1}{p^{\,O(1)\,\cdot\, (\lambda \cdot n + \mu \cdot p^n)}}$,
pour tout $n \gg 0$.}$

\smallskip
L'existence de $i_0 \gg 0$, {\it ind\'ependant de $n \to \infty$}, stopant
les algorithmes, peut para\^itre arbitraire car 
les liens num\'eriques entre les \'etages $n$ et $n+h$ semblent 
difficilement analysables, \`a l'exeption du sch\'ema \eqref{schema}
du \S\,\ref{3sub2} \,(iv) qui tient compte \`a la fois 
des pas $i$ et des \'etages $n+h$ pour tout $h \geq 0$~; 
mais cette existence est 
renforc\'ee par le fait que $k$ v\'erifie la conjecture de Greenberg si et 
seulement si $\widetilde {\Cl_k}$ capitule dans $k_\infty$ (cf. \S\,\ref{log}).
En effet, si $\widetilde {\Cl_k}$ capitule dans $k_n$, il capitule 
dans $k_{n+h}$ pour tout $h\geq 0$. 
Il y a proba\-blement un lien concret entre le $k_{n_0}$ de 
capitulation de $\widetilde {\Cl_k}$ et $i_0$ qui pourrait \^etre li\'e au 
nombre de pas $m_{n_0}$ correspondant. De plus, une capitulation 
est progressive de $k$ \`a $k_{n_0}$, ce qu'il serait utile d'interpr\'eter
en termes d'algorithme de d\'evissage des $M_i^n$.

\smallskip
On pourrait traiter le cas d'un unique id\'eal premier 
dans $k$ au-dessus de $p$ car alors 
le facteur normique est toujours trivial et l'algorithme ne porte que 
sur le facteur classes. Quant au cas d'une d\'ecomposition partielle, 
il est clair que les m\^emes heuristiques s'appliquent, le r\'esultat 
de Jaulent \'etant g\'en\'eral et les formules \ref{cr} pouvant \^etre 
modifi\'ees en cons\'equence selon \cite{Gra2}. 

\smallskip
En conclusion, le comportement des $\Cl_{k_n}$ dans la tour ne 
d\'epend pas uniquement de {\it circonstances alg\'ebriques 
\`a la Iwasawa}, ni m\^eme de la th\'eorie du corps de classes
ou de celle des fonctions $L_p$, mais d'autres ph\'enom\`enes 
arithm\'etiques $p$-adiques subtils qui se lisent de fa\c con 
probabiliste au moyen des invariants habituels du 
corps $k$, sauf que la stabilisation pr\'ecise de $\order \Cl_{k_n}$, 
lorsque $n\to\infty$, semble al\'eatoire et certainement 
{\it non born\'ee sur l'ensemble des corps de nombres totalement r\'eels},
\`a $p$ constant. 

\smallskip
Par contre, \`a $k$ constant, nous avons conjectur\'e 
dans \cite[Conjecture 8.11]{Gra6} que $k$ est $p$-rationnel 
(i.e., ${\mathcal T}_k=1$) pour tout $p \gg 0$, en notant que, 
pour $p$ assez grand, ${\mathcal T}_k$ est r\'eduit au r\'egulateur 
${\mathcal R}_k$ (suite exacte \eqref{se1}) qui reste l'invariant crucial.

\smallskip
Comme d\'ej\`a dit, la non-$p$-rationalit\'e d'un corps de nombres $k$
(principalement totalement r\'eel) semble \^etre une 
obstruction irr\'eductible (\`a l'heure actuelle) \`a la preuve de 
nombreuses conjectures en th\'eorie de Galois sur $k$.
On peut penser que cela provient, quel que soit le cadre 
th\'eorique, de la nature des fonctions $L_p$ 
correspondantes, obtenues par interpolation de valeurs 
complexes, auquel cas, comme l'avait remarqu\'e Washington en 
1980/81 dans le cas de ${\mathcal T}_k$ (voir la bibliographie 
de \cite{Gra7}), la pr\'esence de \go Siegel zeroes\gf 
(i.e., tr\`es proches de $1$) rend ce type d'invariants cohomologiques 
probl\'ematiques, notamment lorsque $p$ varie ou tend vers l'infini.


\begin{thebibliography}{}

\bibitem
{BJ}  K. Belabas, K.,  Jaulent, J-F.: 
The logarithmic class group package in PARI/GP. {Publ. 
Math. Besan\c con}, 5--18 (2016)
\url{http://pmb.univ-fcomte.fr/2016/Belabas_Jaulent.pdf}

\bibitem
{Co}  Coates, J.:  $p$-adic $L$-functions and Iwasawa's theory.
In:  {Algebraic Number Fields. Proc. of Durham Symposium} 1975, 
New York-London, 269--353 (1977)

\bibitem
{Col}  Colmez, P.: R\'esidu en $s = 1$ des fonctions z\^eta $p$-adiques.
{Invent. Math.} {\bf 91}, 371--389 (1988)
\url{http://gdz.sub.uni-goettingen.de/dms/load/img/?PID=GDZPPN002104911}

\bibitem
{Gra1}  Gras, G.: {Class Field Theory: from theory to practice}, corr. 2nd ed.
Springer Monographs in Mathematics, Springer, xiii+507 pages (2005)

\bibitem
{Gra2}  Gras, G.:  Invariant generalized ideal classes--Structure theorems for
$p$-class groups in $p$-extensions. {Proc. Math. Sci.} 
{\bf 127}, no. 1, 1--34 (2017)\par
\url{http://link.springer.com/article/10.1007/s12044-016-0324-1}

\bibitem
{Gra3}  Gras, G.:  Approche $p$-adique de la conjecture de Greenberg
pour les corps totalement r\'eels. {Annales Math\'ematiques
Blaise Pascal} {\bf 24}, no. 2, 235--291 (2017) \par
\url{http://ambp.cedram.org/cedram-bin/article/AMBP_2017__24_2_235_0.pdf}

\bibitem
{Gra4}  Gras, G.: The $p$-adic Kummer--Leopoldt Constant:
Normalized $p$-adic Regu\-lator. {International Journal of Number Theory},
{\bf 14}, no. 2, 329--337 (2018) \par
\url{https://doi.org/10.1142/S1793042118500203}

\bibitem
{Gra5}  Gras, G.: On $p$-rationality of number fields. 
Applications -- PARI/GP programs. {Publ. Math. Fac. Sci. Besan\c con
(Th\'eorie des Nombres)}, Ann\'ees 2017/2018 (to appear) \par
\url{https://arxiv.org/pdf/1709.06388.pdf}

\bibitem
{Gra6}  Gras, G.:  Les $\theta$-r\'egulateurs locaux d'un nombre alg\'ebrique :
Conjectures $p$-adiques. {Canadian Journal of Mathematics} 
{\bf 68}(3), 571--624 (2016) \par
\url{http://dx.doi.org/10.4153/CJM-2015-026-3} \url{https://arxiv.org/pdf/1701.02618.pdf}

\bibitem
{Gra7}  Gras, G.: Heuristics and conjectures in direction of 
a $p$-adic Brauer--Siegel theorem. {Math. Comp.} (2018) (to appear)
\url{https://doi.org/10.1090/mcom/3395}

\bibitem
{Gre1}  Greenberg, R.:  On the Iwasawa invariants of totally real number fields.
{Amer. J. Math.} {\bf 98}, no. 1, 263--284 (1976) \par
\url{http://www.jstor.org/stable/2373625?seq=1#page_scan_tab_contents}

\bibitem
{Gre2}  Greenberg, R.:  Iwasawa theory - past and present. Class field theory - its 
centenary and prospect (Tokyo, 1998). {Advanced Studies in Pure Math., Math. Soc. 
Japan} {\bf 30}, 335--385 (2001)
\url{https://sites.math.washington.edu/~greenber/iwhi.ps}

\bibitem
{J1}  Jaulent, J-F.:  Th\'eorie $\ell$-adique globale du  corps de classes.
{J. Th\'eorie des Nombres de Bordeaux} {\bf 10}, no. 2, 355--397 (1998) \par
\url{http://www.numdam.org/article/JTNB_1998__10_2_355_0.pdf}

\bibitem
{J2}  Jaulent, J-F.:  Note sur la conjecture de Greenberg.
{J. Ramanujan Math. Soc.} (2017)
\url{http://jrms.ramanujanmathsociety.org/in-publication/in-publication-list/note-sur-la-conjecture-de-greenberg}  

\bibitem
{Ng}  Nguyen Quang Do, T.:  Formules de genres et conjecture de Greenberg.
{Ann. Math. Qu\'ebec}, {\bf 42}, no. 2,  267--280 (2018)
\url{https://doi.org/10.1007/s40316-017-0093-y}

\bibitem
{OT}  Ozaki, M., Taya, H.: A note on Greenberg's conjecture for real
abelian number fields. {Manuscripta Math.} 88, no. 1, 311--320 (1995) \par
\url{http://link.springer.com/article/10.1007/BF02567825}

\bibitem
{Oz}  Ozaki, M.: The class group of $\Z_p$-extensions over totally 
real number fields. {Tohoku Math. J.} {\bf 49}, 431-435 (1997) \par
\url{https://projecteuclid.org/download/pdf_1/euclid.tmj/1178225114}

\bibitem
{P}  The PARI Group:  PARI/GP version \texttt{2.9.0}.
{Universit\'e de Bordeaux} (2016) \par
\url{http://pari.math.u-bordeaux.fr/}.

\bibitem
{Se}  Serre, J-P.:  Sur le r\'esidu de la fonction z\^eta $p$-adique d'un corps
de nombres. {C.R. Acad. Sci. Paris} {\bf 287}, S\'erie I, 183--188 (1978). 

\bibitem
{Ta}  Taya, H.: On cyclotomic $\Z_p$-extensions of real quadratic
fields. {Acta Arithmetica} {\bf 74}, no. 2, 107--119 (1996)
\url{http://matwbn.icm.edu.pl/ksiazki/aa/aa74/aa7422.pdf}
\end{thebibliography}
\end{document}